\numberwithin{equation}{section}\theoremstyle{plain}
\newtheorem{thm}{Theorem}[section]
\newtheorem{prop}[thm]{Proposition}
\newtheorem{lem}[thm]{Lemma}
\newtheorem{cor}[thm]{Corollary}
\theoremstyle{definition}
\newtheorem{rem}[thm]{Remark}
\newtheorem{exa}[thm]{Example}
\newtheorem{exas}[thm]{Examples}
\newcommand\I{\mathbb I}
\def \N {\mathbb{N}}
\def \Z {\mathbb{Z}}
\def \k {\Bbbk}
\def \o {\otimes}
\begin{document}


 \title[Partial (co)actions of Taft and Nichols Hopf algebras on algebras]{Partial (co)actions of Taft and Nichols Hopf algebras on algebras}
\author[Fonseca, Martini and Silva]{Graziela Fonseca, Grasiela Martini and Leonardo Silva}

\address[Fonseca]{Instituto Federal Sul-rio-grandense, Brazil}
\email{grazielalangone@gmail.com}

\address[Martini]{Universidade Federal do Rio Grande do Sul, Brazil}
\email{grasiela.martini@ufrgs.br}

\address[Silva]{Universidade Federal do Rio Grande do Sul, Brazil.}
\email{dsleonardo@ufrgs.br}

\thanks{\noindent \textbf{2020 MSC:} Primary 16W99; Secondary 16T99. \\ 
	\hspace*{0.3cm} 
	\textbf{Key words and phrases:} Partial action; Partial coaction; Taft algebra; Nichols Hopf algebra.
}

\begin{abstract}
	In this paper, we characterize suitable partial (co)actions of Taft and Nichols Hopf algebras on algebras, and moreover we get that such partial (co)actions are symmetric.
	For certain algebras, these partial (co)actions obtained are, indeed, all of them.
	This work generalizes the results obtained by the authors in \cite{taft_corpo_revista}.
\end{abstract}

\maketitle

\tableofcontents

\section{Introduction}

Global and partial actions of Hopf algebras on algebras are widely studied subjects, with various interests, applications and ramifications. However, calculating and classifying such actions is a difficult task. There exist several works that calculate (global) actions of certain Hopf algebras on different types of algebras, such as fields, domains, Weyl algebras, path algebras of quivers, among others \cite{CuadraEtingofWaltonI,CuadraEtingofWaltonII, etingofewaltonSemisimple, etingofewaltonI, etingofewaltonFinite, etingofewaltonFinite2, etingofewaltonII}. 

In general, the works investigate actions of two classes of Hopf algebras: the semisimple and the pointed (nonsemisimple).
In particular, the Taft algebra is an important example to be considered in the pointed case.
Because of that, some works deal particularly with Taft actions on algebras \cite{Acoes_taft_Centrone, Acoes_taft_Chelsea,  montgomery_Schneider_taft_actions}.

In order to understand the actions of pointed Hopf algebras, another interesting example to be considered is the Nichols Hopf algebra: it is the archetype of pointed Hopf algebras.
Indeed, finite dimensional pointed Hopf algebras (with abelian group of group-like elements $G$) are expected to be a lifting of the bosonization of a Nichols algebra by the group algebra $\k G$ \cite{Nicolas_ICM}. In particular, the Nichols Hopf algebra is just the bosonization of an exterior algebra by the group algebra of the cyclic group of order 2.

For partial Hopf actions, the situation is even more unexplored, with just a few concrete examples known.
In \cite{taft_corpo_revista}, the authors exhibit explicitly all partial actions of Taft and Nichols Hopf algebras on their base fields, obtaining that such partial actions depend only on the partial action of the group of group-like elements, a derivation and a compatibility between both, same as it happens in the global case \cite{Acoes_taft_Centrone}.

In this work, the authors continue the investigation of the partial actions of Taft and Nichols Hopf algebras on algebras, extending the results of \cite{taft_corpo_revista}.
In particular, Theorem \ref{taft_principal} shows that, with suitable conditions, the partial actions still depend only on the partial group action, a derivation  and a compatibility between them.

This paper is organized as follows.
In Section \ref{sec:preliminaries}, we recall the definitions of partial (co)actions of a bialgebra on an algebra and also some definitions and results about \emph{q-combinatorics}, that will be used in the computations of the partial Taft actions.
In Section \ref{sec:Taft}, we investigate and characterize suitable partial actions of the Taft algebra on an algebra. Furthermore, we present the partial Taft coactions obtained dually from such partial actions. At last, in Section \ref{sec:Nichols}, we compute some partial (co)actions of a Nichols Hopf algebra on algebras.

\section{Preliminaries}\label{sec:preliminaries}

Throughout the work, \emph{algebra} means \emph{unital associative algebra}.
Besides, $\k^{\times} = \k \backslash \{0\}$ and unadorned $\otimes$ means $\otimes_{\Bbbk}$.	
For a coalgebra $C$, $\varepsilon_C$ and $\Delta_C$ stands for the counit and the comultiplication maps of $C$, respectively, and will be written as  $\varepsilon$ and $\Delta$ if there is no ambiguity.
Moreover, we use Sweedler-Heynemann notation with summation sign suppressed for the comultiplication map, namely $\Delta(c)=c_1 \o c_2 \in C \o C$, for all $c \in C$.
We write $G(C)= \{g \in C \backslash \{0\} \ : \ \Delta(g) = g \o g \}$ for the set of the \emph{group-like elements} of the coalgebra $C$.
Given $g,h \in G(C)$, an element $x \in C$ is called a \emph{$(g,h)$-primitive element} if $\Delta(x)= x \o g + h \o x$.
If no emphasis on the elements $g, h \in G(C)$ is needed, a $(g,h)$-primitive element will be called simply of a \emph{skew-primitive element}.

For a group $G$, we write $\k G$ for the group algebra with the canonical basis $\{g \ : \ g \in G \}$.
Moreover, given a basis $\{v_1, v_2, \cdots , v_n\}$ of a finite-dimensional vector space, we write $ \{v_1^*, v_2^*, \cdots , v_n^*\}$ for its dual basis.

We use $\Z$, $\N$ and $\N_0$ for the sets of integers, the positive integers and $\N \cup \{0\}$, respectively.
	Let $ j, k \in \Z$ and $n \in \N$.
	The symbols $\delta_{j,k}$ and $C_n$ stand for Kronecker's delta and the cyclic group of order $n$, respectively.
	If $j \leq k$, then $\I_{j, k} =\{j, j+1, \cdots, k\}$ and $\I_n = \I_{1, n}$.

\subsection{Partial (co)actions}
Partial actions and partial coactions of bialgebras on algebras were introduced in \cite{caenepeel2008partial}.
Since this earlier definition does not deal with symmetric conditions, we will present as it appears in later works.
These conditions are important for some developments in the theory of partial (co)actions, such as partial (co)representations \cite{corepresentations, Partial_representations, dilations}.

A \emph{partial action of a bialgebra $H$ on an algebra $A$} is a linear map $\cdot: H \otimes A \longrightarrow A$, denoted by $\cdot (h \o a) = h \cdot a$, such that
\begin{itemize}
	\item[(PA.1)]\label{PA1} $1_H \cdot a=a$, 
	\item[(PA.2)]\label{PA2} $h\cdot ab=(h_1\cdot a)(h_2\cdot b)$, and
	\item[(PA.3)]\label{PA3} $h\cdot(k\cdot a)=(h_1\cdot 1_A)(h_2k\cdot a)$,
\end{itemize}
hold for all $h,k\in H$ and $a,b\in A$.
In this case, $A$ is called a \textit{partial $H$-module algebra}.
A partial action is \emph{symmetric} if in addition we have
\begin{itemize}
	\item[(PA.S)]\label{PAS} $h \cdot ( k \cdot a)=(h_1k \cdot a)(h_2 \cdot 1_A)$.
\end{itemize}

Note that conditions (PA.2) and (PA.3) are equivalent  to the following:
\begin{itemize}
	\item[(PA.2$^\prime$)]\label{PA2e3} $h \cdot (a( k \cdot b))=(h_1 \cdot a)(h_2k \cdot b)$.
\end{itemize}

Every $H$-module algebra is a symmetric partial $H$-module algebra. Moreover, a partial $H$-module algebra is an $H$-module algebra if and only if $h \cdot 1_A = \varepsilon(h) 1_A$, for all $h \in H$.

A \emph{(symmetric) partial coaction of a bialgebra $H$ on an algebra $A$} is a linear map $\rho: A \longrightarrow A\otimes H$ satisfying dual conditions to the previous conditions (PA.$\_$).
In fact, the concepts of partial actions and partial coactions are dually related in \cite{dual_constructions} as follows.

Let $\langle -,- \rangle: K \otimes H \longrightarrow \Bbbk$ be a dual pairing between bialgebras and $A$ an algebra. If $A$ is a (symmetric)
partial $K$-comodule algebra, then $A$ is a (symmetric) partial $H$-module algebra. 
Furthermore, if the dual pairing $\langle -,- \rangle$ is non-degenerate and $A$ is a rational (symmetric) partial $H$-module algebra, then $A$ is a (symmetric) partial $K$-comodule algebra.
Thus, in the particular case that $H$ is a finite-dimensional bialgebra with basis $\{h_i\}_{i \in I}$, we obtain the following characterization.

\begin{prop} \label{dual}
	If $A$ is a (symmetric) partial $H$-module algebra via $h \cdot a$, then $A$ is a (symmetric) partial $H^{\ast}$-comodule algebra via	$\rho: A \rightarrow A\otimes H^*$, given by $\rho(a)=\sum_{i \in I} h_i\cdot a\otimes h_i^{\ast}$. Reciprocally, if $A$ is a (symmetric) partial $H$-comodule algebra via $\rho: A \rightarrow A\otimes H$, where $\rho(a)=a^0\otimes a^1$, then $A$ is a (symmetric) partial $H^{\ast}$-module algebra via $h^{\ast} \cdot a=h^{\ast}(a^1)a^0$.
\end{prop}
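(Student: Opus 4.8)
The plan is to specialize the general duality result quoted above to the canonical pairing between $H$ and its dual $H^*$. Since $H$ is finite-dimensional, I would first record that $H^*$ is again a bialgebra and that the evaluation map $\langle -,-\rangle \colon H^* \otimes H \to \k$, $\langle h^*, h\rangle = h^*(h)$, is a non-degenerate dual pairing of bialgebras. Non-degeneracy is immediate from finite-dimensionality, and the bialgebra compatibility of the pairing is the standard fact that the product and coproduct of $H^*$ are defined as the transposes of the coproduct and product of $H$. Moreover, because $H$ is finite-dimensional, every partial $H$-module algebra is automatically rational, so the rationality hypothesis in the general statement is free.

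For the first implication, I would take $K = H^*$ in the duality result of \cite{dual_constructions}. Starting from a (symmetric) partial $H$-module algebra $A$, the converse part of the general statement---applicable since the pairing is non-degenerate and $A$ is rational---yields a (symmetric) partial $H^*$-comodule structure $\rho \colon A \to A \otimes H^*$. It remains to identify $\rho$ explicitly. The construction is characterized by the compatibility $h \cdot a = \langle a^1, h\rangle\, a^0$ for all $h \in H$, where $\rho(a) = a^0 \otimes a^1$. Expanding $a^1$ in the dual basis $\{h_i^*\}_{i\in I}$ and pairing against $h_j$, the relation $h_i^*(h_j) = \delta_{i,j}$ forces $\rho(a) = \sum_{i\in I} (h_i \cdot a) \otimes h_i^*$, which is the asserted formula.

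For the converse implication, I would instead take $K = H$ and let $H^*$ play the role of the module bialgebra, using the pairing $\langle -,-\rangle \colon H \otimes H^* \to \k$. Starting from a (symmetric) partial $H$-comodule algebra $A$ with $\rho(a) = a^0 \otimes a^1$, the first part of the general statement produces a (symmetric) partial $H^*$-module structure, and unwinding the construction gives $h^* \cdot a = \langle a^1, h^*\rangle\, a^0 = h^*(a^1)\, a^0$, exactly as claimed.

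The substantive verifications are absorbed into the cited theorem, so the remaining work is bookkeeping: confirming that the evaluation pairing meets its hypotheses and that the two explicit formulas are precisely the structures it produces via the dual-basis expansion above. The mild obstacle I anticipate is matching the variance conventions---which factor of the pairing carries the comodule leg, and in which order the axioms dualize---so that the symmetry condition (PA.S) is transported correctly in both directions; once the conventions are fixed consistently with \cite{dual_constructions}, each formula follows immediately.
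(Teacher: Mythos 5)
Your proposal is correct and follows essentially the same route as the paper: the proposition is stated there precisely as the specialization of the quoted duality result of \cite{dual_constructions} to the evaluation pairing between $H^*$ and $H$, with finite-dimensionality supplying non-degeneracy and automatic rationality, just as you argue. Your dual-basis unwinding of the two explicit formulas and your care with which factor of the pairing carries the comodule leg match the intended (and otherwise unwritten) verification.
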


\subsection{q-combinatorics}
Let $n, m \in \Z, n \geq 0$, and $q \in \k^{\times}$. The \emph{q-binomial coefficients} are defined recursively as follows.
First, set ${0 \choose 0}_q = 1$ and ${n \choose m}_q = 0$  if $m>n$ or $m<0$.
Then, for $n \geq 1$ and $0 \leq m \leq n$,
\begin{align}\label{def_q_binomial}
	{n \choose m}_q  =  {{n-1} \choose {m-1}}_q + q^{m}  {{n-1} \choose m}_q.
\end{align}

It follows that ${n \choose 0}_q = 1 = {n \choose n}_q$ and ${n \choose m}_q = {n \choose {n-m}}_q$.

The \emph{q-numbers} and the \emph{q-factorials} also are defined recursively: $(0)_q = 0$,  $(n)_q = \sum_{\ell=0}^{n-1}q^\ell;$ and 
$(0)_q ! = 1$, $(n)_q ! = (n)_q (n-1)_q !,$ respectively.

Note that, if $q \neq 1$, then $q$ is an $n^{th}$ root of unity if and only if $(n)_q = 0$.

Suppose that $n \geq 1$ and $q \in \k^{\times}$. If $(n-1)_q! \neq 0$, then 
\begin{align}\label{def_q_bino_fatorial}
	{n \choose m}_q  =  \dfrac{(n)_q!}{(n-m)_q!(m)_q!},
\end{align}
for all $0 < m < n$.
In particular, if $q \neq 1$ is a primitive $n^{th}$ root of unity, then ${n \choose m}_q  = 0,$
for all $0 < m < n$.

Suppose $q$ is a primitive $N^{th}$ root of unity.  For $k \geq 0$, let $k_D$ and $k_R$ be the integers determined by $k =k_D \ N + k_R$ where $0 \leq k_R <N$.
Then 
\begin{align}\label{radford}
	{n \choose m}_q  = {n_R \choose m_R}_q {n_D \choose m_D},
\end{align}
for all $0 \leq m \leq n$.

Furthermore, the following equalities are well-know: 
\begin{align}
	\sum_{k=0}^{m} (-1)^k {m \choose k}_q q^{\frac{k(k+1)}{2} - km}=0  \quad (m \geq 1),
\end{align}

\begin{align}\label{id_q}
	{n \choose m}_q = q^{m(n-m)} { n \choose m}_{q^{-1}},
\end{align}
\begin{align} \label{6.7}
	{n \choose m}_q  =  {{n-1} \choose m}_q  + q^{n-m}  {{n-1} \choose {m-1}}_q.
\end{align}

These definitions and equalities are classic for \emph{q-computations}, and one can find more details about them,  for instance, in \cite{radford}.
However, to compute partial actions of Taft algebras on algebras, we are going to need some more.
We present below \emph{q-identities} that will be used to compute such partial actions in the following section.

\begin{lem}\cite[Lemmas 3.2, 3.3 and 3.9]{taft_corpo_revista} \label{prodqbinom}  \label{lema1}  \label{lema4}
	Let $q \in \k^{\times}$ and $i,j,k,s, t \in \N_0$, such that $k \leq j$.
	Then, 
	\begin{align} \label{Lema2_2}
		&{ j \choose k }_q { {j-k} \choose {i-k} }_q = { j \choose i }_q { i \choose k }_q;\\ 
		\label{Lema2_6}
		&\sum_{\ell=0}^{j} (-1)^\ell q^{s \ell +\frac{\ell(\ell+1)}{2}} {j \choose \ell}_q {{j+t-\ell} \choose {j+s}}_q ={t \choose s}_q;\\
		\label{Lema2_10}
		&q^{s(i-j)} \sum_{\ell=0}^{j} {j \choose \ell}_q {{j+t-\ell} \choose {i+s-\ell}}_q {\ell \choose i}_q (-1)^{i-\ell} q^{\frac{(i-\ell)(i-\ell+1)}{2}}=	{j \choose i}_q {t \choose s}_q.
	\end{align}
\end{lem}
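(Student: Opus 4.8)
The plan is to establish the three identities in the order \eqref{Lema2_2}, \eqref{Lema2_6}, \eqref{Lema2_10}, since the first is a tool for the last and the second is the engine for the last. A useful preliminary observation is that, by the recursion \eqref{def_q_binomial}, each $q$-binomial coefficient is a polynomial in $q$ with integer coefficients; hence every one of the three identities is an equality of polynomials in $q$. It therefore suffices to prove them with $q$ treated as a formal indeterminate, working over $\k(q)$ (equivalently, assuming all the $q$-factorials involved are invertible so that \eqref{def_q_bino_fatorial} applies), and then to specialize $q$ to any element of $\k^{\times}$. This bypasses the degenerate behaviour at roots of unity.

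For \eqref{Lema2_2} (the $q$-analog of the subset-of-a-subset, or trinomial revision, identity) I would simply expand both sides by the factorial formula \eqref{def_q_bino_fatorial}: both reduce to $(j)_q!\big/\big[(k)_q!\,(i-k)_q!\,(j-i)_q!\big]$ after cancellation, which gives the claim over $\k(q)$ and hence for all $q$. Alternatively one can argue by a short double induction on $j$ and $i$ using \eqref{def_q_binomial}. For \eqref{Lema2_6}, I would recognize it as a form of the $q$-Chu--Vandermonde summation: setting $N=j+t$ and $r=j+s$, the specialization $q=1$ is the standard alternating identity $\sum_{\ell}(-1)^{\ell}{j\choose \ell}{N-\ell\choose r}={N-j\choose r-j}={t\choose s}$, and \eqref{Lema2_6} is its $q$-deformation carrying the weight $q^{s\ell+\ell(\ell+1)/2}$. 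I would prove it by induction on $j$: the case $j=0$ is immediate, and the inductive step splits ${j\choose \ell}_q$ via the Pascal recursion \eqref{def_q_binomial} and uses \eqref{6.7} to shift the upper entry of the second factor, regrouping the two resulting sums into the statement for $j-1$. The alternating identity $\sum_{k}(-1)^k{m\choose k}_q q^{k(k+1)/2-km}=0$ recorded above is exactly what makes the spurious boundary terms cancel.

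The main work, and the main obstacle, is \eqref{Lema2_10}. Here I would first apply \eqref{Lema2_2} in the form ${j\choose \ell}_q{\ell\choose i}_q={j\choose i}_q{j-i\choose \ell-i}_q$ to pull the factor ${j\choose i}_q$ out of the sum, and then reindex by $\ell=i+m$ with $m\ge 0$, which turns the remaining alternating sum into
\begin{align*}
	q^{-s(j-i)}\sum_{m=0}^{j-i}(-1)^{m}q^{\frac{m(m-1)}{2}}{j-i\choose m}_q{j-i+t-m\choose s-m}_q.
\end{align*}
The goal is to identify this quantity with ${t\choose s}_q$, i.e.\ with the right-hand side after removing the factor ${j\choose i}_q$.

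This is a sum of exactly the shape \eqref{Lema2_6}, with $j-i$ playing the role of $j$, but the matching is not literal: to bring the lower entry $s-m$ and the $q$-power $q^{m(m-1)/2}$ into the precise form appearing in \eqref{Lema2_6} one must apply the symmetry ${n\choose m}_q={n\choose n-m}_q$ together with the inversion rule \eqref{id_q} to convert one factor to base $q^{-1}$, thereby generating the missing linear-in-$m$ powers of $q$ and absorbing them into the prefactor $q^{-s(j-i)}$. Carrying out this power-of-$q$ and index bookkeeping correctly is the delicate point; once the sum has been put into the form \eqref{Lema2_6}, that identity evaluates it and the claim follows.
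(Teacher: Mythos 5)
Your proposal is correct, but there is nothing in this paper to compare it against line by line: the lemma is stated with a citation and never proved here, being imported from \cite[Lemmas 3.2, 3.3 and 3.9]{taft_corpo_revista}; judging from the proofs the present paper does supply for its neighboring identities (\eqref{lema_acao_da_acao}, \eqref{identidade_R_L}), the style of the source is direct induction on the Pascal recursions \eqref{def_q_binomial} and \eqref{6.7}. Your route differs in two genuinely useful ways. First, the reduction to $\mathbb{Q}(q)$ is legitimate and clarifying: each ${n \choose m}_q$ defined by \eqref{def_q_binomial} is a Gaussian polynomial in $\Z[q]$, so an identity verified via \eqref{def_q_bino_fatorial} over the rational function field specializes to every $q \in \k^{\times}$; this also explains why this lemma, unlike \eqref{igualdade_q_n}--\eqref{identidade_R_L_item} below it, needs no root-of-unity hypothesis. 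Second, deriving \eqref{Lema2_10} from \eqref{Lema2_2} and \eqref{Lema2_6} rather than by a fresh induction is sound, and the ``delicate bookkeeping'' you defer does close. With $J=j-i$ and $p=q^{-1}$, applying \eqref{id_q} to both $q$-binomials and then the symmetry ${n \choose m}_p={n \choose n-m}_p$ turns the $m$-th term of your displayed sum into
\begin{align*}
q^{\frac{m(m-1)}{2}}{J \choose m}_q {J+t-m \choose s-m}_q
= q^{s(J+t-s)}\, p^{\,(t-s)m+\frac{m(m+1)}{2}}\, {J \choose m}_p {J+t-m \choose J+t-s}_p,
\end{align*}
so your sum is precisely the left-hand side of \eqref{Lema2_6} in base $p$ with $s$ replaced by $t-s$, and evaluates to $q^{s(J+t-s)}{t \choose s}_p = q^{sJ}{t \choose s}_q$ by one more application of \eqref{id_q}; multiplying by your prefactor $q^{-s(j-i)}=q^{-sJ}$ yields ${t \choose s}_q$ exactly as required. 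The only point you should make explicit is the parameter range: the substitution $s \mapsto t-s$ stays within the stated hypotheses of \eqref{Lema2_6} only when $s \leq t$; for $s>t$ either observe that \eqref{Lema2_6} remains valid for a negative lower parameter (its right-hand side is then $0$ under the convention ${n \choose m}_q = 0$ for $m<0$, and the same induction goes through), or dispose of that case separately, noting the right-hand side of \eqref{Lema2_10} is then $0$. A similar one-line remark covers the degenerate ranges of \eqref{Lema2_2} (namely $i<k$ or $i>j$), where both sides vanish, so your factorial cancellation is only needed on the nondegenerate range.
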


\begin{lem}
	Let $t,\ell \in \N_0$ and $q \in \k^{\times}$.
	Then,
	\begin{itemize}
		\item[(i)] if $0 \leq \ell < t$ and $(t-1)_q! \neq 0$, then
		\begin{equation}\label{igualdade_estrela_simetria} 
			\left(1-q^{t - \ell}\right) { t \choose \ell}_q  +q^{\ell+1} { t \choose \ell + 1}_q = { t \choose \ell + 1}_q;
		\end{equation}
		\item[(ii)] for each $m \in N_0$, it holds
		\begin{equation} 	\label{lema_acao_da_acao} 
			\sum_{s=0}^{m} (-1)^{m-s} q^{\frac{s(s+1)}{2}-sm} {m \choose s}_q {{s+t} \choose {\ell}}_q = q^{\frac{m(m+1)}{2} + m(t-\ell)} {t \choose \ell-m}_q.
		\end{equation}
	\end{itemize}
\end{lem}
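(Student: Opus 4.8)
The plan is to handle the two parts separately and to prove (ii) by induction on $m$, arranging matters so that its inductive step collapses onto the identity of part (i). Throughout I use the standard convention $\binom{t}{j}_q = 0$ for $j<0$ or $j>t$, which makes the defining recursions valid for all integer indices.

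For (i), I would avoid the factorial formula entirely: writing the two defining recursions \eqref{def_q_binomial} and \eqref{6.7} for $\binom{t+1}{\ell+1}_q$ and subtracting them, the two terms $\binom{t}{\ell-1}_q$-type contributions cancel and one is left with
\begin{equation*}
\left(1-q^{t-\ell}\right)\binom{t}{\ell}_q=\left(1-q^{\ell+1}\right)\binom{t}{\ell+1}_q .
\end{equation*}
Adding $q^{\ell+1}\binom{t}{\ell+1}_q$ to both sides gives \eqref{igualdade_estrela_simetria} at once. The hypothesis $(t-1)_q!\neq 0$ is only needed if one instead argues through \eqref{def_q_bino_fatorial} together with $(1-q)(n)_q=1-q^n$; the recursion argument needs no restriction on $q$. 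This disposes of the most formidable-looking display in a couple of lines.

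For (ii), I would fix $t$, leave $\ell$ free, and abbreviate the two sides as $S_m(\ell)$ (the left-hand sum) and $T_m(\ell)=q^{m(m+1)/2+m(t-\ell)}\binom{t}{\ell-m}_q$, inducting on $m$ with the claim $S_m(\ell)=T_m(\ell)$ for all $\ell$. The base case $m=0$ is immediate, both sides being $\binom{t}{\ell}_q$. In the step I expand $\binom{m+1}{s}_q=\binom{m}{s-1}_q+q^s\binom{m}{s}_q$ via \eqref{def_q_binomial} inside $S_{m+1}(\ell)$ and split the sum. The $q^s\binom{m}{s}_q$ part reassembles, once the exponent $\tfrac{s(s+1)}{2}-s(m+1)+s=\tfrac{s(s+1)}{2}-sm$ and the sign $(-1)^{m+1-s}=-(-1)^{m-s}$ are tidied, into $-S_m(\ell)$. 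The $\binom{m}{s-1}_q$ part is the crux: reindexing $s\mapsto s+1$ and simplifying the new exponent to $\tfrac{s(s+1)}{2}-sm-m$ recasts it as $q^{-m}\sum_{s}(-1)^{m-s}q^{s(s+1)/2-sm}\binom{m}{s}_q\binom{s+t+1}{\ell}_q$, and a further application of \eqref{def_q_binomial} in the form $\binom{s+t+1}{\ell}_q=\binom{s+t}{\ell-1}_q+q^{\ell}\binom{s+t}{\ell}_q$ breaks it into $q^{-m}\bigl(S_m(\ell-1)+q^{\ell}S_m(\ell)\bigr)$.

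Feeding in the induction hypothesis, I obtain $S_{m+1}(\ell)=q^{-m}\bigl(T_m(\ell-1)+q^{\ell}T_m(\ell)\bigr)-T_m(\ell)$, and it remains to check that this equals $T_{m+1}(\ell)$. Pulling out the common factor $q^{m(m-1)/2+m(t-\ell)}$, the required equality reduces exactly to
\begin{equation*}
\left(1-q^{t-k}\right)\binom{t}{k}_q=\left(1-q^{k+1}\right)\binom{t}{k+1}_q,\qquad k=\ell-m-1,
\end{equation*}
which is precisely the content of part (i) (valid for every integer $k$, so no range restriction on $\ell$ is needed). The inductive step therefore closes. The main obstacle I anticipate is purely the exponent bookkeeping — aligning the three powers of $q$, namely the collapse to $\tfrac{s(s+1)}{2}-sm$, the shift that produces the prefactor $q^{-m}$, and the final extraction of $q^{m(m-1)/2+m(t-\ell)}$ — so that the leftover is recognizably the identity of part (i); once those exponents are tracked correctly, the remaining algebra is forced.
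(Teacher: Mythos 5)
Your proof is correct, and it departs from the paper's in two instructive ways. For (i), the paper simply invokes the factorial formula \eqref{def_q_bino_fatorial} together with ${t \choose \ell+1}_q = \frac{(t-\ell)_q}{(\ell+1)_q}{t \choose \ell}_q$, which is exactly why it carries the hypothesis $(t-1)_q! \neq 0$; your subtraction of the two recursions \eqref{def_q_binomial} and \eqref{6.7} written for ${t+1 \choose \ell+1}_q$ (one small wording slip: what cancels is the common left-hand side ${t+1 \choose \ell+1}_q$, not ``${t \choose \ell-1}_q$-type'' terms) proves the identity for every $q \in \k^{\times}$ and, under the vanishing convention, for every integer index --- a genuine strengthening, and one your part (ii) actually needs. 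For (ii), both you and the paper induct on $m$, split ${m+1 \choose s}_q$ by \eqref{def_q_binomial}, and reindex to reach the same intermediate expression $q^{-m}\sum_{s=0}^{m}(-1)^{m-s}q^{\frac{s(s+1)}{2}-sm}{m \choose s}_q{s+t+1 \choose \ell}_q - S_m(\ell)$; the routes part only there. The paper reads the first sum as the induction hypothesis with $t$ replaced by $t+1$ (legitimate, since the induction is on $m$ with $t,\ell$ universally quantified) and closes with a single application of \eqref{6.7}, keeping (i) and (ii) logically independent. You instead expand ${s+t+1 \choose \ell}_q$ once more by \eqref{def_q_binomial}, apply the hypothesis at $(t,\ell-1)$ and $(t,\ell)$, and close with part (i) at $k=\ell-m-1$ --- one step longer, but it makes (ii) genuinely rest on (i), and the restriction-free version of (i) you proved is precisely what keeps the step valid when $\ell-m-1$ falls outside $[0,t)$ or when $(t-1)_q!=0$ (your use of the hypothesis at $\ell-1$ is also harmless at $\ell=0$, both sides vanishing). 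In short: same inductive skeleton, different closing move, and your variant buys a cleaner statement of (i) at the cost of one extra expansion.
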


\begin{proof}
	\begin{itemize}
		\item[(i)] It is straightforward using that $ { t \choose \ell + 1}_q = \frac{(t-\ell)_q}{(\ell + 1)_q}  { t \choose \ell}_q$  and \eqref{def_q_bino_fatorial}.
		
		\item[(ii)]	The proof is done by induction on $m$. The statement is clear for $m=0$. For $m > 0$, note that 
		\begin{align*}
			& \sum_{s=0}^{m+1} (-1)^{m+1-s} q^{\frac{s(s+1)}{2}-s(m+1)} {m+1 \choose s}_q {{s+t} \choose {\ell}}_q \\
			\stackrel{\eqref{def_q_binomial}}{=}  & \sum_{s=1}^{m+1} (-1)^{m+1-s} q^{\frac{s(s+1)}{2}-s(m+1)} {m \choose s-1}_q {{s+t} \choose {\ell}}_q \\	
			- & \sum_{s=0}^{m} (-1)^{m-s} q^{\frac{s(s+1)}{2}-sm} {m \choose s}_q {{s+t} \choose {\ell}}_q\\
			= & q^{-m}\sum_{s=0}^{m} (-1)^{m-s} q^{\frac{s(s+1)}{2}-sm} {m \choose s}_q {{s+(t+1)} \choose {\ell}}_q \\	
			- & \sum_{s=0}^{m} (-1)^{m-s} q^{\frac{s(s+1)}{2}-sm} {m \choose s}_q {{s+t} \choose {\ell}}_q.
		\end{align*}
		So, we conclude the induction step applying the induction hypothesis in both summations, and using equality \eqref{6.7}.
	\end{itemize}
\end{proof}

\begin{lem}
	Let $q\in \k \setminus\{1\}$ be a $n^{th}$ root of unity. Then,
	\begin{itemize}
		\item[(i)] for each $0 \leq i < n$, it holds that
		\begin{equation} \label{igualdade_q_n}
			(-1)^i q^{\frac{i(i+1)}{2}} { n-1 \choose i}_q = 1;
		\end{equation}
		\item[(ii)] if $q$ is a primitive root of unity, and $\ell, t \in \N_0$ such that $0 \leq \ell \leq t < n$, then it holds that 
		\begin{equation}\label{identidade_lema_simetria_n}
			(-1)^\ell q^{t \ell - \frac{\ell(\ell-1)}{2}} { n+\ell-t-1 \choose \ell}_q = { t \choose \ell}_q.
		\end{equation}
	\end{itemize}
\end{lem}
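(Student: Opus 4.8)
The plan is to reduce both identities to a single \emph{reflection property} of the $q$-numbers at a root of unity. Since $q^n=1$, for every integer $a$ one has $q^{n-a}=q^{-a}$, and therefore
\[
	(n-a)_q = \frac{q^{n-a}-1}{q-1} = \frac{q^{-a}-1}{q-1} = -q^{-a}\,\frac{q^{a}-1}{q-1} = -q^{-a}\,(a)_q .
\]
I will use that $q$ is a primitive $n^{th}$ root of unity, so that $(k)_q\neq 0$ for $1\le k\le n-1$ and hence $(n-1)_q!\neq 0$; this is exactly what makes the factorial expression \eqref{def_q_bino_fatorial} available for all the binomials that occur. (Some primitivity hypothesis is genuinely needed already in (i): for the non-primitive root $q=-1$, $n=4$, one computes ${3\choose 2}_{-1}=1$, and then the left-hand side of \eqref{igualdade_q_n} equals $-1$ at $i=2$.)

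For (i), since $(n-1)_q!\neq 0$ I expand ${n-1\choose i}_q$ by \eqref{def_q_bino_fatorial} and read it as a telescoping product
\[
	{n-1\choose i}_q = \frac{(n-1)_q!}{(i)_q!\,(n-1-i)_q!} = \prod_{k=1}^{i}\frac{(n-k)_q}{(k)_q}.
\]
The reflection property gives $\dfrac{(n-k)_q}{(k)_q}=-q^{-k}$ for each $k$, so the product collapses to $\prod_{k=1}^{i}(-q^{-k})=(-1)^i q^{-(1+2+\cdots+i)}=(-1)^i q^{-\frac{i(i+1)}{2}}$. Multiplying through by $(-1)^i q^{\frac{i(i+1)}{2}}$ yields \eqref{igualdade_q_n}.

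For (ii) I would avoid recomputing factorials and instead feed part (i) into the product identity \eqref{Lema2_2}. Specializing ${j\choose k}_q{j-k\choose i-k}_q={j\choose i}_q{i\choose k}_q$ at $j=n-1$, $i=t$ and $k=t-\ell$ (the constraints $0\le t-\ell\le t\le n-1$ hold because $0\le\ell\le t<n$) gives
\[
	{n-1\choose t-\ell}_q\,{n+\ell-t-1\choose \ell}_q = {n-1\choose t}_q\,{t\choose \ell}_q .
\]
Now (i) evaluates the two boundary factors, ${n-1\choose t-\ell}_q=(-1)^{t-\ell}q^{-\frac{(t-\ell)(t-\ell+1)}{2}}$ and ${n-1\choose t}_q=(-1)^{t}q^{-\frac{t(t+1)}{2}}$, both of which are units (a sign times a power of $q$), so I may solve for ${t\choose \ell}_q$. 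Collecting the sign $(-1)^{(t-\ell)-t}=(-1)^\ell$ and using the exponent simplification
\[
	\frac{t(t+1)}{2}-\frac{(t-\ell)(t-\ell+1)}{2} = \frac{\ell\,(2t-\ell+1)}{2} = t\ell-\frac{\ell(\ell-1)}{2},
\]
I obtain ${t\choose \ell}_q=(-1)^\ell q^{\,t\ell-\frac{\ell(\ell-1)}{2}}\,{n+\ell-t-1\choose \ell}_q$, which is \eqref{identidade_lema_simetria_n}.

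Neither step hides a genuine difficulty: the whole content is the reflection identity $(n-a)_q=-q^{-a}(a)_q$, and everything else is telescoping and the algebra of quadratic exponents in $\ell$ and $t$. The main thing to watch is the bookkeeping of signs and exponents, together with checking that each binomial that is expanded or inverted is really a unit, which primitivity guarantees. If one prefers not to invoke \eqref{Lema2_2} in (ii), an equally short alternative is to expand both sides directly by \eqref{def_q_bino_fatorial}: the denominators $(\ell)_q!$ coincide, and the factor-by-factor reflection $(t-r)_q=-q^{t-r}(n-t+r)_q$ for $r=0,\dots,\ell-1$ turns the numerator of ${t\choose \ell}_q$ into $(-1)^\ell q^{\,t\ell-\frac{\ell(\ell-1)}{2}}$ times the numerator of ${n+\ell-t-1\choose \ell}_q$.
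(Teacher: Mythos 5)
Your proof is correct, but it takes a genuinely different route from the paper's. The paper proves both items by induction: item (i) by induction on $i$ via the recursion \eqref{def_q_binomial} (implicitly together with the vanishing ${n \choose m}_q=0$ for $0<m<n$, which holds for primitive roots), and item (ii) by an inductive computation with the factorial formula \eqref{def_q_bino_fatorial}. You avoid induction entirely: for (i) you telescope ${n-1 \choose i}_q=\prod_{k=1}^{i}(n-k)_q/(k)_q$ against the reflection identity $(n-a)_q=-q^{-a}(a)_q$, and for (ii) you specialize the product identity \eqref{Lema2_2} at $j=n-1$, $i=t$, $k=t-\ell$, evaluate the two boundary binomials ${n-1 \choose t-\ell}_q$ and ${n-1 \choose t}_q$ by part (i) (using the symmetry ${t \choose t-\ell}_q={t \choose \ell}_q$), and divide, which is legitimate since these factors are units; your exponent bookkeeping $\frac{t(t+1)}{2}-\frac{(t-\ell)(t-\ell+1)}{2}=t\ell-\frac{\ell(\ell-1)}{2}$ checks out. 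What your approach buys is twofold: it isolates the single fact driving both identities (the reflection of $q$-numbers at a root of unity), and it deduces (ii) from (i) plus an identity already available in the paper rather than by a separate computation. Moreover, your side remark is a genuine correction to the statement itself: item (i) as written assumes only $q\neq 1$ with $q^n=1$, but your counterexample ($q=-1$, $n=4$, $i=2$, where ${3 \choose 2}_{-1}=1+q+q^2=1$ makes the left-hand side of \eqref{igualdade_q_n} equal to $-1$) shows that primitivity cannot be dropped; the paper's own inductive argument also tacitly needs it, and since every application of \eqref{igualdade_q_n} in the paper occurs with $q$ a primitive $n^{th}$ root of unity, nothing downstream is affected.
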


\begin{proof} Both items are proved by induction. For item (i) it can be used equality \eqref{def_q_binomial}, while for item (ii) it is a computation using \eqref{def_q_bino_fatorial}.
\end{proof}

\begin{lem}\label{Lema_R_L_simetria}
	Let $q\in \k$ be a primitive $n^{th}$ root of unity and $k, \ell, t \in \N_0$ such that $ k, \ell, t < n$. Then,
	\begin{itemize}
		\item[(i)] if $n \leq t+\ell$, then, for each $0 \leq m < t$, it holds that \begin{equation}\label{identidade_R_L}
			\begin{split}
				& \sum_{s=0}^{m}(-1)^s q^{\frac{s(s+1)}{2}-sm} { t \choose m-s}_q { s \choose t+\ell-n}_q \\
				= & \ (-1)^m q^{\frac{-m(m-1)}{2}-t(t+\ell-m)}  { m - t + n \choose \ell }_q;
			\end{split}
		\end{equation}
		\item[(ii)] if $\ell \leq k$, then it holds that
		\begin{equation}		
			\sum_{s=0}^{\ell}(-1)^s q^{\frac{s(s+1)}{2}-s(\ell-k)} { n+ \ell - k \choose s}_q = { k \choose \ell }_q;
		\end{equation}
		
		\item[(iii)] if $\ell \leq k$, then, for each $\ell \leq m < n$, it holds that
		\begin{equation}\label{identidade_R_L_item}
			\sum_{s=0}^{m}(-1)^s q^{\frac{s(s+1)}{2}-s(\ell-k)} { n+ m - k \choose s}_q { n + m - s \choose n+\ell-s}_q = { k \choose \ell }_q.
		\end{equation}
	\end{itemize}
\end{lem}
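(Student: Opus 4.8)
The plan is to treat the three identities uniformly: each is an equality among $q$-binomial coefficients at a primitive $n^{th}$ root of unity, so I would systematically use $q^n=1$ together with the two root-of-unity identities \eqref{igualdade_q_n} and \eqref{identidade_lema_simetria_n} to trade every coefficient of the ``large'' shape ${n+\cdots \choose \cdots}_q$ for one whose top is strictly below $n$. Once all tops are $<n$, the surviving equalities are identities for a generic $q$ and can be reached either by a short induction on the free summation bound, via the Pascal recursions \eqref{def_q_binomial} and \eqref{6.7}, or by matching them against the convolution identities \eqref{Lema2_6}, \eqref{Lema2_10} and \eqref{lema_acao_da_acao} already in hand.

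I would prove item (ii) first, since it is both the cleanest case and the base case of item (iii). Setting $N=n+\ell-k$ and using $q^{sn}=1$, the exponent $\tfrac{s(s+1)}{2}-s(\ell-k)$ collapses to $\tfrac{s(s+1)}{2}-sN$, so the left-hand side becomes $\sum_{s=0}^{\ell}(-1)^s q^{\frac{s(s+1)}{2}-sN}{N \choose s}_q$. A one-line induction on $\ell$ using \eqref{6.7} then gives that this equals $(-1)^\ell q^{\frac{\ell(\ell+1)}{2}-N\ell}{N-1 \choose \ell}_q$. Finally I would apply \eqref{identidade_lema_simetria_n} with $t=k$ (valid since $\ell\le k<n$), which rewrites ${k \choose \ell}_q=(-1)^\ell q^{k\ell-\frac{\ell(\ell-1)}{2}}{N-1 \choose \ell}_q$; the two $q$-exponents differ by $-n\ell$ (because $N+k=n+\ell$), hence agree modulo $q^n=1$, closing the case.

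For item (iii) I would induct on $m$, the base case $m=\ell$ being exactly item (ii) since ${n+\ell-s \choose n+\ell-s}_q=1$. In the inductive step I would expand both $n$-shifted coefficients ${n+m-k \choose s}_q$ and ${n+m-s \choose n+\ell-s}_q$ by \eqref{def_q_binomial}/\eqref{6.7} and, after reindexing, expect the surplus terms to cancel through the vanishing of the alternating sum $\sum_{s=0}^{m}(-1)^{s}q^{\frac{s(s+1)}{2}-sm}{m \choose s}_q=0$, so that the total is independent of $m$ on $\ell\le m<n$. For item (i) I would likewise induct on $m$: the base $m=0$ splits into $t+\ell=n$ and $t+\ell>n$, both immediate. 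Alternatively, when $m\ge t+\ell-n$ one can convert the right-hand coefficient ${n+m-t \choose \ell}_q$ by \eqref{identidade_lema_simetria_n} (the complementary range $m<t+\ell-n$ giving $0=0$ on both sides), reducing item (i) to a generic-$q$ convolution of ${t \choose m-s}_q$ against ${s \choose t+\ell-n}_q$ of the same type as \eqref{Lema2_6}, which is then settled by the same inductive technique.

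The main obstacle throughout is the exponent bookkeeping: one must repeatedly reduce powers of $q$ modulo $n$ and check that the quadratic exponents produced by \eqref{identidade_lema_simetria_n} and by the Pascal recursions line up exactly. A second, genuine difficulty is respecting the ranges of validity of the root-of-unity conversions, which require a top strictly below $n$; this is what forces the split $m\ge t+\ell-n$ versus $m<t+\ell-n$ in item (i) and what keeps the shifted coefficients of item (iii) admissible only because $k,\ell,m<n$, so the delicate point in (iii) is to run the induction while preserving these constraints.
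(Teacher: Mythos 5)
Your item (ii) is sound and in fact slicker than the paper's argument: the partial alternating sum does collapse, with $N=n+\ell-k$, to $(-1)^\ell q^{\frac{\ell(\ell+1)}{2}-N\ell}{N-1 \choose \ell}_q$ by a one-step induction with \eqref{6.7}, and the residual exponent discrepancy against \eqref{identidade_lema_simetria_n} (applied with $t=k$, legitimate since $\ell\le k<n$) is exactly $-n\ell$, which dies at a root of unity. The paper instead runs the induction on the untransformed sum and closes it with \eqref{identidade_lema_simetria_n} together with \eqref{igualdade_estrela_simetria}; same ingredients, but your version isolates a reusable closed form.

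The genuine gaps are the inductive steps of (i) and (iii), which you leave at the level of expectation, and the one concrete mechanism you do name for (iii) is the wrong one. Writing $F_\ell(m)$ for the left-hand side of \eqref{identidade_R_L_item}: after discarding the $s=m+1$ term (correct, since $0<n+\ell-m-1<n$ forces ${n \choose n+\ell-m-1}_q=0$) and expanding both shifted coefficients by Pascal, the inner factor is ${n+m-k \choose s}_q$, not ${m \choose s}_q$, so the vanishing of $\sum_{s=0}^{m}(-1)^s q^{\frac{s(s+1)}{2}-sm}{m \choose s}_q$ never enters. What the expansion actually yields is $F_\ell(m+1)=q^\ell F_\ell(m)+\bigl(1-q^{1+k-\ell}\bigr)F_{\ell-1}(m)$ (using $q^n=1$ and that the top term of the $\ell-1$ sum again carries a vanishing ${n \choose \cdot}_q$): the step lowers $\ell$, so induction on $m$ alone does not close — you need the statement at $\ell-1$ as well, and the recursion is then settled by \eqref{igualdade_estrela_simetria} in the form $(1-q^{\ell}){k \choose \ell}_q=(1-q^{k-\ell+1}){k \choose \ell-1}_q$. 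The same descent is what you miss in (i): the split $t+\ell>n$ versus $t+\ell=n$ is not about the base $m=0$ (which is indeed immediate in both regimes) but about the inductive step, since Pascal applied to ${s+1 \choose t+\ell-n}_q$ produces the identity with $t+\ell-n$ lowered by one, and the descent bottoms out at $t+\ell=n$, where the paper must run a separate induction using \eqref{identidade_lema_simetria_n} and \eqref{6.7}. Your fallback reduction of (i) to ``a convolution of the same type as \eqref{Lema2_6}'' is likewise unsubstantiated: after converting ${m-t+n \choose \ell}_q$, the left side still carries ${s \choose t+\ell-n}_q$, so $n$ does not disappear and the sum does not match the shape of \eqref{Lema2_6}. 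In short: skeleton right, base cases right, item (ii) fully right; the two hard inductions lack their actual engines.
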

\begin{proof} We prove the statements by induction on the upper index of the summation symbol.
	\begin{itemize}
		\item[(i)] To prove this item, we consider two cases: $t+ \ell > n$ and $t + \ell =n$.
		
		First, assume $t+\ell>n$.
		For $m=0$, it is clear. 
		For the induction step, note that
		\begin{align*}
			& \sum_{s=0}^{m+1}(-1)^s q^{\frac{s(s+1)}{2}-s(m+1)} { t \choose m + 1-s}_q { s \choose t+\ell-n}_q \\
			= \ & \ \sum_{s=0}^{m}(-1)^{s+1} q^{\frac{(s+1)(s+2)}{2}-(s+1)(m+1)} { t \choose m -s}_q { s + 1 \choose t+\ell-n}_q \\
			\stackrel{\eqref{def_q_binomial}}{=} & \ -q^{-m}\sum_{s=0}^{m}(-1)^{s} q^{\frac{s(s+1)}{2}-sm} { t \choose m -s}_q { s  \choose t+\ell-n-1}_q \\
			- &\ q^{t+\ell-m}\sum_{s=0}^{m}(-1)^{s} q^{\frac{s(s+1)}{2}-sm} { t \choose m -s}_q { s \choose t+\ell-n}_q.
		\end{align*}
		Then, one conclude the induction step using the induction hypothesis in both summations and applying \eqref{def_q_binomial}.
		
		Now, assume $t+\ell=n$.
		Thus, identity \eqref{identidade_R_L} is rewritten as
		\begin{align*}
			\sum_{s=0}^{m}(-1)^s q^{\frac{s(s+1)}{2}-sm} { t \choose m-s}_q 
			= (-1)^m q^{mt - \frac{m(m-1)}{2}}  { m - t + n \choose m }_q .
		\end{align*}
		
		Again, the case $m=0$ is clear. For the induction step, assume $m \geq 0$ and note that 
		\begin{align*}
			& \sum_{s=0}^{m+1}(-1)^s q^{\frac{s(s+1)}{2}-s(m+1)} { t \choose m+1-s}_q \\
			= \ & \ { t \choose m+1} - q^{-m} \sum_{s=0}^{m}(-1)^{s} q^{\frac{s(s+1)}{2}-sm} { t \choose m-s}_q \\ 
			= \ & \ { t \choose m+1} - q^{-m} \left((-1)^m q^{mt- \frac{m(m-1)}{2}}  { m - t + n \choose m }_q \right), 
			\end{align*}
		where the induction hypothesis was used to obtain the last equality. Then, the result follows using equalities \eqref{identidade_lema_simetria_n} and \eqref{6.7}.
			
		\item[(ii)] The statement is clear for $\ell=0$. Assume $\ell > 0$ and $\ell+ 1 \leq k$. Then,  
		\begin{align*}
			& \sum_{s=0}^{\ell+1}(-1)^s q^{\frac{s(s+1)}{2}-s(\ell+1-k)} { n+ \ell +1 - k \choose s}_q \\
			\stackrel{\eqref{def_q_binomial}}{=} & 1+ \sum_{s=1}^{\ell+1}(-1)^s q^{\frac{s(s+1)}{2}-s(\ell+1-k)} { n+ \ell - k \choose s -1}_q \\
			+ & \sum_{s=1}^{\ell+1}(-1)^s q^{\frac{s(s+1)}{2}-s(\ell-k)} { n+ \ell - k \choose s}_q \\
			= & \sum_{s=0}^{\ell}(-1)^{s+1} q^{\frac{(s+1)(s+2)}{2}-(s+1)(\ell+1-k)} { n+ \ell - k \choose s}_q \\
			+ & \sum_{s=0}^{\ell}(-1)^s q^{\frac{s(s+1)}{2}-s(\ell-k)} { n+ \ell - k \choose s}_q \\
			+ &   (-1)^{\ell+1} q^{\frac{(\ell+1)(\ell+2)}{2}-(\ell+1)(\ell-k)} { n+ \ell - k \choose \ell + 1}_q\\
			=&  - q^{k - \ell} { k \choose \ell}_q + { k \choose \ell}_q   +q^{\ell+1} { k \choose \ell + 1}_q. 
		\end{align*}
		where the last equality follows from the induction hypothesis and equality \eqref{identidade_lema_simetria_n}. Hence, \eqref{igualdade_estrela_simetria} concludes the induction step.
		
		\item[(iii)] The statement holds for $m= \ell$ by the previous item.
		Now, for the induction hypothesis, note that
		\begin{align*}
			& \sum_{s=0}^{m+1}(-1)^s q^{\frac{s(s+1)}{2}-s(\ell-k)} { n+ m + 1 - k \choose s}_q { n + m + 1 - s \choose n+\ell-s}_q \\
			= & \sum_{s=0}^{m}(-1)^s q^{\frac{s(s+1)}{2}-s(\ell-k)} { n+ m + 1 - k \choose s}_q { n + m + 1 - s \choose n+\ell-s}_q,
		\end{align*} 
		since ${ n \choose n + \ell - m - 1}_q=0$, because $0<n+\ell -m -1 <n$.
		
		Then, by \eqref{def_q_binomial} we obtain the result.
	\end{itemize}
\end{proof}

\section{Partial (co)actions of the Taft Algebra}\label{sec:Taft}

\noindent{\bf Taft Algebra.}
Let $n \geq 2$ be an integer and suppose that $\k$ contains a primitive $n^{th}$ root of unity $q$.
In particular $char(\k) \nmid n$.
The \emph{Taft algebra of order $n$}, or shortly \emph{Taft algebra}, here denoted by $T_n(q)$, has the following structure:
as algebra it is generated over $\k$ by two elements $g$ and $x$ with relations $g^n=1,$ $x^n =0$ and $xg = q gx$.
Thus, the set $\{g^ix^j \ : \ 0 \leq   i, j <n \}$ is the canonical basis for $T_n(q)$ and so $dim_\k(T_n(q)) = n^{2}$.
The coalgebra structure of $T_n(q)$ is induced by setting $g$ a group-like element and $x$ an $(1,g)$-primitive element, that is, $\Delta (g) = g \o g$, $\Delta (x) = x \o 1 + g \o x$, $\varepsilon (g) = 1$ and $\varepsilon (x) = 0$.
In general, the comultiplication map is $\Delta (g^ix^j) = \sum_{\ell=0}^j {j \choose \ell}_q  g^{i+\ell}x^{j-\ell} \o g^ix^\ell$, for $i, j \in \I_{0, n-1}$.
To complete the Hopf algebra structure of $T_n(q)$, the antipode map $S$ is defined by $S(g) =g^{n-1}$ and $S(x) = -g^{n-1}x.$
Furthermore, note that the group of group-like elements is $G( T_n(q) ) = \{ 1, g, \cdots, g^{n-1} \}$, \emph{i.e.}, $ G( T_n(q) ) =  C_n.$

\subsection{Partial actions} 
In this section, we characterize suitable partial actions of $T_n(q)$ on an algebra $A$. If $A$ has only trivial idempotent elements, these are all partial actions.

\begin{lem}\label{idempotente_comuta}
	Let $ \cdot : H \otimes A \longrightarrow A$ be a partial action of $H$ on $A$. For each $h \in G(H)$ and $a \in A$:
	\begin{itemize}
		\item[(i)]  $(h \cdot 1_A)a(h \cdot 1_A) = (h \cdot 1_A)a$;
		\item[(ii)] if the partial action $\cdot|_{\Bbbk G(H)\otimes A}$ is symmetric, then $(h \cdot 1_A)a=a(h \cdot 1_A)$;
		\item[(iii)] if $h \cdot 1_A =0$, then $(h^i \cdot 1_A)(h^{i+1} \cdot 1_A)=0$, for all $i \in \Z$.
	\end{itemize}
\end{lem}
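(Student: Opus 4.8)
The plan is to reduce everything to the simplification of the axioms for a group-like element $h$: since $\Delta(h)=h\o h$ and $\varepsilon(h)=1$, axiom (PA.2) becomes the multiplicativity $h\cdot(ab)=(h\cdot a)(h\cdot b)$ (so, taking $a=b=1_A$, the element $e_h:=h\cdot 1_A$ is idempotent), while (PA.3) reads $h\cdot(k\cdot a)=e_h\,(hk\cdot a)$. Because the group-like elements here are invertible in $G(H)$ (which is precisely what makes the powers $h^i$, $i\in\Z$, in item (iii) meaningful, with $h^{-1}=S(h)$), I can substitute $k=h^{-1}$ into the simplified (PA.3) to obtain the single identity I will use repeatedly,
\[
 h\cdot(h^{-1}\cdot a)=e_h\,(1_H\cdot a)=e_h\,a .
\]

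For item (i) I would start from $e_h\,a=h\cdot(h^{-1}\cdot a)$, multiply on the right by $e_h=h\cdot 1_A$, and collapse the resulting product using the multiplicativity of $h$:
\begin{align*}
 e_h\,a\,e_h=\bigl(h\cdot(h^{-1}\cdot a)\bigr)(h\cdot 1_A)=h\cdot\bigl((h^{-1}\cdot a)\,1_A\bigr)=h\cdot(h^{-1}\cdot a)=e_h\,a .
\end{align*}
For item (ii), the symmetry hypothesis means (PA.S) holds for elements of $\k G(H)$, so for group-like $h$ it reads $h\cdot(k\cdot a)=(hk\cdot a)\,e_h$; taking again $k=h^{-1}$ gives $h\cdot(h^{-1}\cdot a)=a\,e_h$. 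Comparing this with the value $h\cdot(h^{-1}\cdot a)=e_h\,a$ coming from (PA.3) forces $e_h\,a=a\,e_h$, which is exactly the asserted commutation.

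For item (iii) I would not expand the product $(h^i\cdot 1_A)(h^{i+1}\cdot 1_A)$ directly, but instead recognise it as a single instance of (PA.3): applying the simplified (PA.3) with outer element $h^{i}$, inner element $k=h$ and $a=1_A$ gives
\[
 h^{i}\cdot(h\cdot 1_A)=(h^{i}\cdot 1_A)(h^{i+1}\cdot 1_A).
\]
Since by hypothesis $h\cdot 1_A=0$, the left-hand side equals $h^{i}\cdot 0=0$ by linearity, and the conclusion follows for every $i\in\Z$. The only genuine subtlety, and the step I expect to be the main obstacle, is the bookkeeping of substitutions: choosing to reindex by $h^{-1}$ in (i) and (ii), and, in (iii), deciding to evaluate $h^{i}\cdot(h\cdot 1_A)$ rather than attacking the product head-on, so that the hypothesis $h\cdot 1_A=0$ can be inserted \emph{inside} the action before the product is formed. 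Everything else is the routine simplification of the Sweedler legs of a group-like element.
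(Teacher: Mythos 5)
Your proposal is correct and follows essentially the same route as the paper: all three items reduce to the simplified axioms for a group-like element, with the key substitution $k=h^{-1}$ in (PA.3) (and in (PA.S) for item (ii), which the paper leaves as ``analogous''), and item (iii) obtained by evaluating $h^{i}\cdot(h\cdot 1_A)$ via (PA.3) exactly as in the paper. Your item (i) is if anything slightly leaner, since you use $h\cdot(h^{-1}\cdot a)=(h\cdot 1_A)a$ directly and avoid the paper's preliminary step that invokes idempotency of $h\cdot 1_A$.
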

\begin{proof}
	Assume  $h \in G(H)$ and $a \in A$.
	\begin{itemize}
		\item[(i)] First,
	$
	(h \cdot 1_A)(h \cdot (h^{-1} \cdot a)) \stackrel{(PA.2)}{=} (h \cdot 1_A)(h \cdot 1_A)(1_H \cdot a) = (h \cdot 1_A)a. 
	$
	Then, \begin{align*}
	(h \cdot 1_A)a(h \cdot 1_A) & =  (h \cdot 1_A)(h \cdot (h^{-1} \cdot a))(h \cdot 1_A) \\ 
	&\!\!\!\!\! \stackrel{(PA.2)}{=}  (h \cdot 1_A)(h \cdot ((h^{-1} \cdot a) 1_A)) =  (h \cdot 1_A)a.
		\end{align*}

\item[(ii)]	Analogously to the previous item.

\item[(iii)] For each $i \in \Z$,  $(h^i \cdot 1_A)(h^{i+1} \cdot 1_A) \stackrel{(PA.3)}{=} h^i \cdot (h \cdot 1_A) =  h^i \cdot 0 = 0.$ 
\end{itemize}
\end{proof}

\begin{prop}\label{parte_global}
	Let $\cdot : T_n(q) \o A \longrightarrow A$ be a partial action of $T_n(q)$ on $A$.
	If $g \cdot 1_A = 1_A$, then $\cdot$ is a global action.
\end{prop}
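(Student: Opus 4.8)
The plan is to reduce everything to the criterion recalled in the preliminaries: a partial $H$-module algebra is a global $H$-module algebra if and only if $h \cdot 1_A = \varepsilon(h) 1_A$ for all $h \in H$. Since $\{g^i x^j : 0 \le i,j < n\}$ is a basis of $T_n(q)$ and $\varepsilon(g^i x^j) = \delta_{j,0}$, it suffices to show $g^i x^j \cdot 1_A = 1_A$ when $j = 0$ and $g^i x^j \cdot 1_A = 0$ when $j \ge 1$. I write $a_{i,j} := g^i x^j \cdot 1_A$ throughout, and I will repeatedly use the commutation rule $x^\ell g^i = q^{i\ell} g^i x^\ell$ coming from $xg = qgx$.

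First I would dispose of the group-like part. Applying (PA.3) with $h = g$, $k = g^i$, $a = 1_A$, and using that $g$ is group-like, gives $g \cdot (g^i \cdot 1_A) = (g \cdot 1_A)(g^{i+1} \cdot 1_A)$. Feeding in the hypothesis $g \cdot 1_A = 1_A$ and inducting on $i$ yields $a_{i,0} = g^i \cdot 1_A = 1_A$ for every $i$ (with $g^n = 1$ closing the cycle). This is the easy, characteristic-free step.

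The core is an induction on $j \in \{1, \dots, n-1\}$ proving $a_{i,j} = 0$ for all $i$. The key move is (PA.3) with $h = x^j$, $k = g^i$, $a = 1_A$, expanded via $\Delta(x^j) = \sum_{\ell=0}^{j} {j \choose \ell}_q\, g^\ell x^{j-\ell} \otimes x^\ell$. The left-hand side is $x^j \cdot (g^i \cdot 1_A) = a_{0,j}$, while the right-hand side is $\sum_{\ell=0}^{j} {j \choose \ell}_q\, a_{\ell,\,j-\ell}\,(x^\ell g^i \cdot 1_A)$, and commuting $x^\ell g^i = q^{i\ell} g^i x^\ell$ turns each final factor into $q^{i\ell} a_{i,\ell}$. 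By the induction hypothesis every term with $1 \le \ell \le j-1$ vanishes (there $1 \le j-\ell \le j-1$), leaving only $\ell = 0$, which contributes $a_{0,j}$, and $\ell = j$, which contributes $q^{ij} a_{i,j}$ since $a_{j,0} = 1_A$. Thus $a_{0,j} = a_{0,j} + q^{ij} a_{i,j}$, and because $q^{ij} \neq 0$ we conclude $a_{i,j} = 0$ for every $i \ge 1$.

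The main obstacle is that this identity does not settle $i = 0$: there it collapses to $a_{0,j} = 2\,a_{0,j}$, which is inconclusive in characteristic $2$ — a case genuinely permitted here, since only $\mathrm{char}(\k) \nmid n$ is assumed. I would close this gap with a group-like trick. From (PA.3) with $h = g$, $k = x^j$, and $g \cdot 1_A = 1_A$, one gets $g \cdot a_{0,j} = (g \cdot 1_A)(g x^j \cdot 1_A) = a_{1,j} = 0$, the last equality being the $i \ge 1$ case just established. Applying $g^{n-1}$ and (PA.3) once more, with $g^{n-1} \cdot 1_A = 1_A$ and $g^{n-1} g = g^n = 1_H$, gives $a_{0,j} = g^{n-1} \cdot (g \cdot a_{0,j}) = 0$. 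This finishes the induction, so $g^i x^j \cdot 1_A = \varepsilon(g^i x^j)\, 1_A$ on the whole basis, and the criterion identifies $\cdot$ as a global action.
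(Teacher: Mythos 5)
Your proof is correct, but it follows a genuinely different route from the paper's. You reduce globality to the criterion recalled in the preliminaries ($h \cdot 1_A = \varepsilon(h)1_A$ for all $h$) and verify it on the whole basis $\{g^ix^j\}$: the group-like part by iterating (PA.3), and the part with $j \geq 1$ by induction on $j$, expanding $\Delta(x^j)$ in (PA.3) with $k = g^i$ and using $x^\ell g^i = q^{i\ell}g^ix^\ell$ to isolate the $\ell = 0$ and $\ell = j$ terms. The paper is slicker: it first gets $x \cdot 1_A = 0$ in one line from (PA.2) applied to $1_A = 1_A^2$ (yielding $x \cdot 1_A = 2(x\cdot 1_A)$), and then checks the associativity identities $g \cdot (h \cdot a) = gh \cdot a$ and $x \cdot (h \cdot a) = xh \cdot a$ directly via (PA.3), which suffices because $g$ and $x$ generate $T_n(q)$ as an algebra. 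The paper's argument avoids all $q$-binomial bookkeeping; yours buys the explicit values $g^ix^j \cdot 1_A = \varepsilon(g^ix^j)1_A$ on the full basis, in the spirit of the later computation in Proposition \ref{formula_eh_acao_parcial}.

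One correction: the obstacle you flag at $i = 0$ is not actually there. The identity $a_{0,j} = 2a_{0,j}$ rearranges to $a_{0,j} = 0$ in \emph{every} characteristic --- and in characteristic $2$ the right-hand side is literally zero, so the conclusion is immediate rather than inconclusive. (The paper's own step $x \cdot 1_A = 2(x \cdot 1_A)$ is characteristic-free for the same reason.) Your extra patch via $g \cdot a_{0,j} = a_{1,j} = 0$ followed by $g^{n-1} \cdot (g \cdot a_{0,j}) = a_{0,j}$ is a valid argument, just unnecessary; it does not affect the correctness of the proof.
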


\begin{proof}
	Suppose that $g \cdot 1_A = 1_A$.
	It is enough to prove that $g \cdot (h \cdot a) = gh \cdot a$ and $x \cdot (h \cdot a) = xh \cdot a$, for any $h \in T_n(q)$ and $a \in A$.
	First, by (PA.2), we get
	$$x \cdot 1_A = x \cdot 1_A^2 =  (x \cdot 1_A)(1 \cdot 1_A) + (g \cdot 1_A)(x \cdot 1_A)= 2 (x \cdot 1_A),$$
	that is, $x \cdot 1_A = 0$.	Then, by (PA.3),
	$$x \cdot (h \cdot a) = (x \cdot 1_A)(h \cdot a) + (g \cdot 1_A)(xh \cdot a) = xh \cdot a.$$
	Also by (PA.3), it follows that 
	$g \cdot (h \cdot a) = (g \cdot 1_A)(gh \cdot a) = gh \cdot a.$
\end{proof}

\begin{prop}\label{formula_eh_acao_parcial}
	Let $\cdot : T_n(q) \o A \longrightarrow A$ be a partial action of $T_n(q)$ on $A$.
	If $g \cdot 1_A = 0$, then
	 \begin{equation}\label{formula_3_1}
	 g^ix^j \cdot a = q^{-ij} \sum_{k=0}^j (-1)^k q^{- \frac{k(k-1)}{2}} { j \choose k }_{q^{-1}} (x \cdot 1_A)^{j-k}(g^{i+k} \cdot a)(x \cdot 1_A)^k,
	  \end{equation}	 
	for any $a \in A$ and $i,j \in\I_{0, n-1}.$
\end{prop}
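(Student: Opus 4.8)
The plan is to treat $\xi := x\cdot 1_A$ together with the ``group part'' $\{g^m\cdot a\}$ as the basic data and to show that every $g^ix^j\cdot a$ is forced to equal the stated $\xi$-word in them. First I would record two master identities valid for all $h\in T_n(q)$: taking $k=1_H$ in (PA.3) and using (PA.1) gives $h\cdot a=(h_1\cdot 1_A)(h_2\cdot a)$, while writing $a=a\,1_A$ and using (PA.2) gives the mirror $h\cdot a=(h_1\cdot a)(h_2\cdot 1_A)$. Specializing the first to $h=g$ (so $\Delta g=g\otimes g$) yields $g\cdot a=(g\cdot 1_A)(g\cdot a)=0$ for all $a$, since $g\cdot 1_A=0$; specializing to $h=x$ (so $\Delta x=x\otimes 1+g\otimes x$) yields $x\cdot a=(x\cdot 1_A)(1_H\cdot a)+(g\cdot 1_A)(x\cdot a)=\xi a$, i.e.\ the single generator $x$ acts by left multiplication by $\xi$. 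These facts, together with the idempotent relations of Lemma~\ref{idempotente_comuta} — the absorption $(g^m\cdot 1_A)a(g^m\cdot 1_A)=(g^m\cdot 1_A)a$ and the orthogonality $(g^m\cdot 1_A)(g^{m+1}\cdot 1_A)=0$ forced by $g\cdot 1_A=0$ — are the structural inputs, and I would also note $g^m\cdot a=(g^m\cdot 1_A)(g^m\cdot a)$, so each block $g^{i+k}\cdot a$ is absorbed by its own idempotent.

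The formula itself I would prove by induction on $j$, carrying the cases of general $a$ and of $a=1_A$ simultaneously. The base case $j=0$ is the tautology $g^i\cdot a=g^i\cdot a$. For the step, I apply the first master identity to $h=g^ix^j$ and expand with the comultiplication $\Delta(g^ix^j)=\sum_{\ell=0}^j{j \choose \ell}_q g^{i+\ell}x^{j-\ell}\otimes g^ix^\ell$, obtaining
\[
g^ix^j\cdot a=\sum_{\ell=0}^{j}{j \choose \ell}_q (g^{i+\ell}x^{j-\ell}\cdot 1_A)(g^ix^\ell\cdot a).
\]
For $\ell<j$ every factor is of strictly lower $x$-degree on $a$, and each $g^{i+\ell}x^{j-\ell}\cdot 1_A$ is the $a=1_A$ instance; all are known by the induction hypothesis. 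The $q$-binomial identities of the preliminary lemmas — principally \eqref{Lema2_10} and \eqref{lema_acao_da_acao}, with \eqref{id_q} to pass between $q$ and $q^{-1}$ and \eqref{igualdade_estrela_simetria} to reorganize adjacent coefficients — then collapse the resulting multiple sum in powers of $\xi$ (interleaved with the blocks $g^{i+k}\cdot a$) into the single alternating sum of the statement, and in particular account for the prefactor $q^{-ij}$.

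The genuine obstacle is that this expansion is not a plain recursion: the term $\ell=j$ reproduces the unknown as $(g^{i+j}\cdot 1_A)(g^ix^j\cdot a)$, i.e.\ it returns multiplied by the \emph{non-invertible} idempotent $g^{i+j}\cdot 1_A$, so one cannot simply solve for $g^ix^j\cdot a$. To close the argument I would pin down the separate idempotent components of $g^ix^j\cdot a$: the mirror master identity supplies the right component $(g^ix^j\cdot a)(g^i\cdot 1_A)$, while expanding the trivially vanishing $g^{i-1}x^j\cdot(g\cdot a)=0$ by (PA.3) — using $x^\ell g=q^\ell g x^\ell$ and $g\cdot 1_A=0$ — pins down the component $(g^{i+j-1}\cdot 1_A)(g^ix^j\cdot a)$ in terms of lower-degree data. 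The orthogonality $(g^m\cdot 1_A)(g^{m+1}\cdot 1_A)=0$ then lets these components be assembled consistently. Balancing this idempotent bookkeeping against the $q$-combinatorial rearrangement is where essentially all the work lies; the $q$-identities of the preliminaries are exactly what guarantee that the numerical coefficients match the claimed closed form.
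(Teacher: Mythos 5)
Your preparatory steps are sound --- the two master identities, $g\cdot a=0$ for all $a$, and $x\cdot a=(x\cdot 1_A)a$ are all correct --- and you correctly locate the obstruction: in the expansion $g^ix^j\cdot a=\sum_{\ell=0}^{j}{j \choose \ell}_q(g^{i+\ell}x^{j-\ell}\cdot 1_A)(g^ix^\ell\cdot a)$ the top term returns the unknown multiplied by the non-invertible idempotent $e=g^{i+j}\cdot 1_A$. But your proposed repair does not close the argument. Write $u=g^ix^j\cdot a$, $f=g^i\cdot 1_A$, $e'=g^{i+j-1}\cdot 1_A$. Your three relations determine exactly $(1-e)u$ (left expansion), $u(1-f)$ (mirror expansion; note moreover that for $a\neq 1_A$ its top term $(g^{i+j}\cdot a)(g^ix^j\cdot 1_A)$ re-introduces a same-degree unknown, and for $a=1_A$ the unknown appears at both ends) and $e'u$ (from $g^{i-1}x^j\cdot(g\cdot a)=0$). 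Decomposing $u=(1-e)u+e\bigl[u(1-f)\bigr]+euf$, everything is pinned down except the corner $euf$, and none of your relations constrains it: the orthogonality $(g^m\cdot 1_A)(g^{m+1}\cdot 1_A)=0$ of Lemma \ref{idempotente_comuta}(iii) only concerns adjacent idempotents and says nothing about $euf$. Since the intermediate idempotents $g^m\cdot 1_A$, $2\le m\le n-2$, can be genuinely nontrivial (see the $T_4$ example in the paper, where $g^2\cdot 1_A$ cuts out a proper ideal $D_{g^2}$), an element of $A$ is \emph{not} recoverable from the components you list, so ``assembling them consistently'' is precisely the missing step, not a proof.

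The paper's proof avoids the obstruction instead of confronting it. It first establishes the idempotent-free identity $g^{-1}x\cdot a=-q\,a(x\cdot 1_A)$ for \emph{every} $a\in A$: from (PA.3) one gets $g^{-1}\cdot 1_A=0$ and, applying (PA.3) to $g^{-1}x\cdot(g\cdot 1_A)=0$, also $g^{-1}x\cdot 1_A=-q(x\cdot 1_A)$; your mirror identity then gives $g^{-1}x\cdot a=a(g^{-1}x\cdot 1_A)$. Applying (PA.3) with $h=g^{-1}x$ and $k=g^{i+1}x^j$ yields the clean two-term recursion $q^i\,(g^ix^{j+1}\cdot a)=(x\cdot 1_A)(g^ix^j\cdot a)-(g^{i+1}x^j\cdot a)(x\cdot 1_A)$, in which the unknown carries the invertible coefficient $q^i$; a straightforward induction on $j$ using only the Pascal-type identity \eqref{6.7} then produces \eqref{formula_3_1}. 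Note also that your appeal to \eqref{Lema2_10}, \eqref{lema_acao_da_acao} and \eqref{igualdade_estrela_simetria} is misplaced for this proposition: those identities are what the paper needs for the converse direction (Proposition \ref{prop_volta}) and for symmetry (Theorem \ref{simetria_taft}), not here. If you wish to salvage your outline, the single move you are missing is the observation that $g^{n-1}x$ acts globally by right multiplication by $-q(x\cdot 1_A)$; that is what converts your circular expansion into a solvable recursion.
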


\begin{proof}
	First, we note that $g^{-1}x \cdot a = -q a (x \cdot 1_A)$.
	Indeed, by (PA.1) and (PA.3), we get $g^{-1} \cdot (g \cdot 1_A) = (g^{-1} \cdot 1_A)(1 \cdot 1_A)$, that is, $g^{-1} \cdot 1_A =0$. 
	Again, by (PA.1) and (PA.3), we conclude that
	$$ g^{-1}x \cdot (g \cdot 1_A) = (g^{-1}x \cdot 1_A)(1 \cdot 1_A) + (1 \cdot 1_A)(g^{-1}xg \cdot 1_A) = (g^{-1}x \cdot 1_A) + q (x \cdot 1_A),$$
	that is, $g^{-1}x \cdot 1_A = -q (x \cdot 1_A).$ 
	Then, by (PA.1) and (PA.2), we obtain
	$$g^{-1}x \cdot a = (g^{-1}x \cdot a)(g^{-1} \cdot 1_A) + (1 \cdot a)(g^{-1}x \cdot 1_A) = -q a (x \cdot 1_A).$$  
	
	Now, we prove the desired equality by induction on $j$. It is immediate for $j=0$, and if one wants to check for $j=1$ it follows by (PA.3) in $g^{-1}x \cdot (g^{i+1} \cdot a)$. Assume $j \geq 0$.
	Then, by (PA.3), we get
	\begin{align*}
	g^{-1}x \cdot (g^{i+1}x^j \cdot a) = (g^{-1}x \cdot 1_A)(g^{-1}g^{i+1}x^j \cdot a)+ (1 \cdot 1_A)(g^{-1}xg^{i+1}x^{j} \cdot a),
	\end{align*}
	that is, $-q (g^{i+1}x^j \cdot a)(x \cdot 1_A) = -q (x \cdot 1_A)(g^ix^j \cdot a) + q^{i+1} (g^ix^{j+1} \cdot a),$
	and thus $q^i\left(g^ix^{j+1} \cdot a \right) = (x \cdot 1_A)(g^{i}x^j \cdot a)-( g^{i+1}x^j \cdot a)(x \cdot 1_A).$
	
	Using the induction hypothesis, we obtain
	\begin{align*}
	& q^i (g^{i}x^{j+1} \cdot a ) \\
	= & (x \cdot 1_A) \left( q^{-ij} \sum_{k=0}^j (-1)^k q^{- \frac{k(k-1)}{2}} { j \choose k }_{q^{-1}} (x \cdot 1_A)^{j-k}(g^{i+k} \cdot a)(x \cdot 1_A)^k \right) \\
	- & \left(\!\! q^{-(i+1)j} \sum_{k=0}^j (-1)^k q^{- \frac{k(k-1)}{2}} { j \choose k }_{q^{-1}} \!\!\!\!\!\!\!\!\!(x \cdot 1_A)^{j-k}(g^{i+1+k} \cdot a)(x \cdot 1_A)^k \!\!\right)\!\!(x \cdot 1_A) \\
	= & \ q^{-ij}\Big( (x \cdot 1_A)^{j+1}(g^i \cdot 1_A) \\
	+ & \sum_{k=1}^j (-1)^k q^{- \frac{k(k-1)}{2}} { j \choose k }_{q^{-1}} (x \cdot 1_A)^{j+1-k}(g^{i+k} \cdot a)(x \cdot 1_A)^k \\
	- & q^{-j} \sum_{k=0}^{j-1} (-1)^k q^{- \frac{k(k-1)}{2}} { j \choose k }_{q^{-1}} (x \cdot 1_A)^{j-k}(g^{i+1+k} \cdot a)(x \cdot 1_A)^{k+1} \\
	- & q^{-j} (-1)^j q^{-\frac{j(j-1)}{2}} (g^{i+1+j} \cdot a) (x \cdot 1_A)^{j+1} \Big) \\
	= & \ q^{-ij}\Big( (x \cdot 1_A)^{j+1}(g^i \cdot 1_A) \\
	+ & \sum_{k=1}^j (-1)^k q^{- \frac{k(k-1)}{2}} \left({ j \choose k }_{q^{-1}} + q^{-j+(k-1)} { j \choose k-1 }_{q^{-1}} \right) \\
	\times & (x \cdot 1_A)^{j+1-k}(g^{i+k} \cdot a)(x \cdot 1_A)^k + (-1)^{j+1} q^{-\frac{j(j+1)}{2}} (g^{i+1+j} \cdot a) (x \cdot 1_A)^{j+1} \Big) \\
	\stackrel{\eqref{6.7}}{=} & q^{-ij}\Big( (x \cdot 1_A)^{j+1}(g^i \cdot 1_A) \\
	+ & \sum_{k=1}^j (-1)^k q^{- \frac{k(k-1)}{2}} { j + 1 \choose k }_{q^{-1}} (x \cdot 1_A)^{j+1-k}(g^{i+k} \cdot a)(x \cdot 1_A)^k \\
	+ & (-1)^{j+1} q^{-\frac{j(j+1)}{2}} (g^{i+1+j} \cdot a) (x \cdot 1_A)^{j+1} \Big)
	\end{align*}
	that is,
	\begin{align*}
	& g^{i}x^{j+1} \cdot 1_A \\
	= &  q^{-i(j+1)}\sum_{k=0}^{j+1} (-1)^k q^{- \frac{k(k-1)}{2}} { j + 1 \choose k }_{q^{-1}} (x \cdot 1_A)^{j+1-k}(g^{i+k} \cdot a)(x \cdot 1_A)^k.
	\end{align*}
	It concludes the induction step.
\end{proof}

\begin{cor}\label{corolario_ida}
	Let $\cdot : T_n(q) \o A \longrightarrow A$ be a partial action of $T_n(q)$ on $A$.
	If $g \cdot 1_A = 0$, then
	\begin{itemize}
		\item[(i)] $(x\cdot 1_A)^n\in Z(A)$;
		\item[(ii)] $g^i \cdot (x\cdot 1_A) =q^{-i}(g^i \cdot 1_A)(x\cdot 1_A)$, for all $ i\in\I_{0, n-1}$;
		\item[(iii)] If there is $ i\in\I_{2, n-1}$ such that $g^i\cdot a=a$, for all $a \in A$, then $x\cdot 1_A=0.$
	\end{itemize} 
\end{cor}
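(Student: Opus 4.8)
All three items follow from the explicit formula \eqref{formula_3_1} of Proposition \ref{formula_eh_acao_parcial} together with Lemma \ref{idempotente_comuta}; throughout I write $u = x\cdot 1_A$ for brevity. For item (i) the plan is to evaluate \eqref{formula_3_1} at $j=n$. Although Proposition \ref{formula_eh_acao_parcial} is stated for $j\le n-1$, its inductive derivation produces the very same expression at $j=n$ (the recursion used there comes purely from (PA.3) and never invokes $x^n=0$); since $x^n=0$ in $T_n(q)$, the left-hand side $x^n\cdot a$ is then $0$. Because $q$, hence $q^{-1}$, is a primitive $n^{th}$ root of unity, the remark after \eqref{def_q_bino_fatorial} gives ${n\choose k}_{q^{-1}}=0$ for $0<k<n$, so only $k=0$ and $k=n$ survive. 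Using $g^0\cdot a=a=g^n\cdot a$ (as $g^n=1$), this collapses to
\[
0 = u^n a + (-1)^n q^{-\frac{n(n-1)}{2}}\, a\, u^n .
\]
The remaining point is the scalar: since $\{q^k\}_{k=0}^{n-1}$ is the full set of $n^{th}$ roots of unity, $q^{\frac{n(n-1)}{2}}=\prod_{k=0}^{n-1}q^k=(-1)^{n+1}$, whence $(-1)^n q^{-\frac{n(n-1)}{2}}=-1$ and therefore $u^n a = a u^n$ for every $a\in A$, i.e.\ $(x\cdot 1_A)^n\in Z(A)$.

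\textbf{Item (ii).} Here I would expand $g^i\cdot(x\cdot 1_A)$ by (PA.3). As $g^i$ is group-like, $\Delta(g^i)=g^i\otimes g^i$, so $g^i\cdot(x\cdot 1_A)=(g^i\cdot 1_A)(g^ix\cdot 1_A)$. Evaluating \eqref{formula_3_1} at $j=1$ (where ${1\choose 0}_{q^{-1}}={1\choose 1}_{q^{-1}}=1$) yields $g^ix\cdot 1_A = q^{-i}\big[u(g^i\cdot 1_A)-(g^{i+1}\cdot 1_A)u\big]$. Substituting and distributing gives $q^{-i}\big[(g^i\cdot 1_A)u(g^i\cdot 1_A)-(g^i\cdot 1_A)(g^{i+1}\cdot 1_A)u\big]$. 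Now Lemma \ref{idempotente_comuta}(i), applied with $h=g^i$ and the element $u$, reduces the first summand to $(g^i\cdot 1_A)u$, while Lemma \ref{idempotente_comuta}(iii) (available since $g\cdot 1_A=0$) annihilates the product $(g^i\cdot 1_A)(g^{i+1}\cdot 1_A)$ in the second summand. What remains is exactly $q^{-i}(g^i\cdot 1_A)(x\cdot 1_A)$, as claimed.

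\textbf{Item (iii).} Suppose $g^i\cdot a=a$ for all $a\in A$, with $i\in\I_{2,n-1}$. Taking $a=x\cdot 1_A$ gives $g^i\cdot(x\cdot 1_A)=x\cdot 1_A$, while $a=1_A$ gives $g^i\cdot 1_A=1_A$. Feeding both into item (ii) yields $x\cdot 1_A=q^{-i}(x\cdot 1_A)$, that is $(1-q^{-i})(x\cdot 1_A)=0$. Since $q$ is a primitive $n^{th}$ root of unity and $2\le i\le n-1$ forces $i\not\equiv 0\pmod n$, the scalar $1-q^{-i}$ is nonzero and may be cancelled, giving $x\cdot 1_A=0$. (The exclusion of $i=1$ is natural, since $g\cdot a=a$ would contradict the standing hypothesis $g\cdot 1_A=0$ in a nonzero algebra.)

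\textbf{Main obstacle.} The only genuinely delicate step is item (i): one must justify that \eqref{formula_3_1} may legitimately be evaluated at $j=n$ (i.e.\ that the restriction $j\le n-1$ in Proposition \ref{formula_eh_acao_parcial} is only for the intended application and not essential to its induction) and then carry out the root-of-unity bookkeeping $q^{\frac{n(n-1)}{2}}=(-1)^{n+1}$ correctly, so that the surviving $k=0$ and $k=n$ terms combine into a commutator rather than an anticommutator. Items (ii) and (iii) are short once the explicit formula and Lemma \ref{idempotente_comuta} are in hand.
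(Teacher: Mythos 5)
Your proof is correct. For items (ii) and (iii) it coincides with the paper's argument essentially verbatim: the paper likewise writes $g^i \cdot (x\cdot 1_A) = (g^i\cdot 1_A)(g^ix\cdot 1_A)$, inserts \eqref{formula_3_1} at $j=1$, kills $(g^i\cdot 1_A)(g^{i+1}\cdot 1_A)$ by Lemma \ref{idempotente_comuta}(iii) and contracts $(g^i\cdot 1_A)(x\cdot 1_A)(g^i\cdot 1_A)$ by Lemma \ref{idempotente_comuta}(i), and then derives (iii) from (ii) exactly as you do. For item (i) the germ is also the same --- one extra turn of the (PA.3) recursion with $g^{-1}x$, where $x^n=0$ annihilates a term --- but the bookkeeping differs. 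The paper stays at level $j=n-1$: it applies (PA.3) to $g^{n-1}x\cdot(gx^{n-1}\cdot a)$, obtains $(gx^{n-1}\cdot a)(x\cdot 1_A)=(x\cdot 1_A)(x^{n-1}\cdot a)$, expands both sides by \eqref{formula_3_1} at $j=n-1$, discards the boundary terms using $g\cdot a = g^{n-1}\cdot a = 0$, and cancels the remaining middle terms pairwise via \eqref{igualdade_q_n} and the relation ${n-1 \choose k}_{q^{-1}} = -q^{k}{n-1 \choose k-1}_{q^{-1}}$. You instead push the induction of Proposition \ref{formula_eh_acao_parcial} one step further to $j=n$; this is legitimate for the reason you give, since the induction step uses only (PA.3) and $xg=qgx$, while $g^ix^n=0$ in $T_n(q)$ forces the left-hand side $x^n\cdot a$ to be $0$ (your phrase ``never invokes $x^n=0$'' is slightly loose --- the relation is invoked, precisely to zero out the new term --- but your actual use of it is correct). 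The collapse to the $k=0$ and $k=n$ terms via ${n \choose k}_{q^{-1}}=0$ for $0<k<n$, together with $q^{n(n-1)/2}=(-1)^{n+1}$, is verified correctly. Your packaging is in fact a bit more economical: it needs neither $g\cdot a=0$ nor the pairwise cancellation (note the Pascal-type relation the paper uses is exactly the statement that ${n \choose k}_{q^{-1}}$ vanishes), at the cost of the extra remark justifying evaluation at $j=n$. Both routes are sound and amount to the same computation organized differently.
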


\begin{proof}
\begin{itemize}
			\item[(i)] By definition of the partial action, we have
			$$g^{n-1}x\cdot(gx^{n-1}\cdot a)=(g^{n-1}x\cdot 1_A)(x^{n-1}\cdot a)$$
			As seen in the proof of Proposition \ref{formula_eh_acao_parcial}, it holds that $g^{n-1}x\cdot a=-qa(x\cdot 1_A)$. Then, the above equality can be rewritten as 
			$$(gx^{n-1}\cdot a)(x\cdot 1_A)=(x\cdot 1_A)(x^{n-1}\cdot a).$$
			Then, by \eqref{formula_3_1}, we obtain
			\begin{eqnarray*}
				&&	q\sum_{k=0}^{n-1} (-1)^k q^{- \frac{k(k-1)}{2}} { n-1 \choose k }_{q^{-1}} (x \cdot 1_A)^{n-1-k}(g^{k+1} \cdot a)(x \cdot 1_A)^{k+1}\\
				&=& \sum_{k=0}^{n-1} (-1)^k q^{- \frac{k(k-1)}{2}} { n-1 \choose k }_{q^{-1}} (x \cdot 1_A)^{n-k}(g^{k} \cdot a)(x \cdot 1_A)^k.
			\end{eqnarray*}
			Now, since $g \cdot 1_A =0$, and so $g \cdot a= g^{-1} \cdot a=0$, for all $a \in A$, 
			\begin{eqnarray*}
				&&\sum_{k=1}^{n-3} (-1)^k q^{- \frac{k(k-1)}{2}} q { n-1 \choose k }_{q^{-1}}  (x \cdot 1_A)^{n-(k+1)}(g^{k+1} \cdot a)(x \cdot 1_A)^{k+1}\\
				&+& (-1)^{n-1} q^{- \frac{(n-1)(n-2)}{2}} q { n-1 \choose n-1 }_{q^{-1}} a(x\cdot 1_A)^n\\
				&=& (x\cdot 1_A)^n a+\sum_{k=2}^{n-2} (-1)^k q^{- \frac{k(k-1)}{2}} { n-1 \choose k }_{q^{-1}} (x \cdot 1_A)^{n-k}(g^{k} \cdot a)(x \cdot 1_A)^k.
			\end{eqnarray*}	 
		Then, with a suitable change in the summation index,
		\begin{eqnarray*}
				&&\sum_{k=2}^{n-2} -(-1)^k q^{- \frac{k(k-1)}{2}} q^k { n-1 \choose k-1 }_{q^{-1}}  (x \cdot 1_A)^{n-k}(g^{k} \cdot a)(x \cdot 1_A)^{k}\\
				&+& (-1)^{n-1} q^{- \frac{n(n-1)}{2}} { n-1 \choose n-1 }_{q^{-1}} a(x\cdot 1_A)^n\\
				&=&  (x\cdot 1_A)^n a+\sum_{k=2}^{n-2} (-1)^k q^{- \frac{k(k-1)}{2}} { n-1 \choose k }_{q^{-1}} (x \cdot 1_A)^{n-k}(g^{k} \cdot a)(x \cdot 1_A)^k.
			\end{eqnarray*}
		At last, using \eqref{igualdade_q_n} and ${ n-1 \choose k }_{q^{-1}}=-q^k{ n-1 \choose k-1 }_{q^{-1}}$, we conclude that
			$a(x\cdot 1_A)^n=(x\cdot 1_A)^na$, for all $a\in A$.
			\item[(ii)] 
			Since $g^i \in G(T_n(q))$ and $g \cdot 1_A=0$, we have
			\begin{align*}
			g^i \cdot (x\cdot 1_A) = & \ (g^i\cdot 1_A)(g^ix\cdot 1_A) \\ \stackrel{\eqref{formula_3_1}}{=} & \ q^{-i}(g^i \cdot 1_A)((x\cdot 1_A)(g^i \cdot 1_A)-(g^{i+1} \cdot 1_A)(x\cdot 1_A)) \\
		= & \ q^{-i}((g^i \cdot 1_A)(x\cdot 1_A)(g^i \cdot 1_A)-(g^i \cdot 1_A)(g^{i+1} \cdot 1_A)(x\cdot 1_A))\\
			\stackrel{ \ref{idempotente_comuta} (iii)}{=}  & \ q^{-i}(g^i \cdot 1_A)(x\cdot 1_A)(g^i \cdot 1_A) \\
			\stackrel{ \ref{idempotente_comuta}(i)}{=} & q^{-i}(g^i \cdot 1_A)(x\cdot 1_A).
			\end{align*}
	\item[(iii)]  In that case, by the previous item, we conclude that $(x\cdot 1_A)=q^{-i}(x\cdot 1_A)$, for some $i\in\I_{2, n-1}$, and therefore $x\cdot 1_A=0$.
		\end{itemize}
\end{proof}

\begin{lem}\label{lema_ind}
	Let $\cdot : \k C_n \otimes A \longrightarrow A$ be a partial action of $\k C_n$ on $A$.
	Then, for each $w \in A$ such that
	$g^i \cdot w =q^{-i}(g^i \cdot 1_A)w$,
	for all $ i\in\I_{0, n-1}$, the identity
	$$ ( g^i \cdot w)^\ell = q^{-i\ell}(g^i \cdot 1_A)w^\ell$$
	holds for all $\ell \geq 1$. Moreover, if the partial action is symmetric, 
	then $$ ( g^i \cdot w)^\ell = q^{-i\ell}w^\ell(g^i \cdot 1_A).$$
\end{lem}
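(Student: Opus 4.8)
The plan is to prove the first identity by induction on $\ell$ with $i$ fixed, and then to deduce the symmetric case directly from it using part (ii) of Lemma \ref{idempotente_comuta}, without repeating the induction.

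For the base case $\ell = 1$, the asserted equality is precisely the hypothesis $g^i \cdot w = q^{-i}(g^i \cdot 1_A)w$. For the inductive step, I would start from
$$(g^i \cdot w)^{\ell + 1} = (g^i \cdot w)^\ell (g^i \cdot w),$$
apply the induction hypothesis $(g^i \cdot w)^\ell = q^{-i\ell}(g^i \cdot 1_A)w^\ell$ to the first factor and the defining hypothesis $g^i \cdot w = q^{-i}(g^i \cdot 1_A)w$ to the second. This produces the scalar $q^{-i(\ell+1)}$ together with the word $(g^i \cdot 1_A) w^\ell (g^i \cdot 1_A) w$. The crucial step is then to collapse the repeated idempotent in the middle: since $g^i \in G(\k C_n)$, Lemma \ref{idempotente_comuta}(i) applies with $a = w^\ell$ and gives $(g^i \cdot 1_A) w^\ell (g^i \cdot 1_A) = (g^i \cdot 1_A) w^\ell$. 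Substituting this identity turns the word into $(g^i \cdot 1_A) w^{\ell+1}$, so $(g^i \cdot w)^{\ell+1} = q^{-i(\ell+1)}(g^i \cdot 1_A) w^{\ell+1}$, closing the induction.

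For the symmetric statement I would not redo any induction. With the identity $(g^i \cdot w)^\ell = q^{-i\ell}(g^i \cdot 1_A) w^\ell$ already established, I would invoke Lemma \ref{idempotente_comuta}(ii): if the restricted partial action on $\k G(H)$ is symmetric, then $(g^i \cdot 1_A)$ commutes with every element of $A$, in particular with $w^\ell$. Commuting $(g^i \cdot 1_A)$ past $w^\ell$ immediately yields $q^{-i\ell} w^\ell (g^i \cdot 1_A)$, as required.

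The computation itself is short, so I do not expect a genuine obstacle; the single point that must be handled with care is the appearance of the sandwiched factor $(g^i \cdot 1_A) w^\ell (g^i \cdot 1_A)$ in the inductive step. Recognizing this as an instance of Lemma \ref{idempotente_comuta}(i) is exactly what makes the argument go through in the general (non-symmetric) setting, where one is \emph{not} allowed to commute $(g^i \cdot 1_A)$ past $w^\ell$ prematurely; that commutation is reserved for the final, symmetric part of the statement.
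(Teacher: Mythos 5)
Your proof is correct and matches the paper's own argument, which is exactly an induction on $\ell$ using Lemma \ref{idempotente_comuta}(i) to collapse the sandwiched idempotent $(g^i \cdot 1_A) w^\ell (g^i \cdot 1_A)$. Deducing the symmetric case by commuting $(g^i \cdot 1_A)$ past $w^\ell$ via Lemma \ref{idempotente_comuta}(ii) is also the natural reading of the paper's (terse) proof, so there is nothing genuinely different here.
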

\begin{proof}
	The result follows by induction on $\ell$ 	using Lemma \ref{idempotente_comuta} (i).
\end{proof}

From the above lemma we have the following consequence.

\begin{cor}\label{cor_condw}
		Let $\cdot : \k C_n \otimes A \longrightarrow A$ be a partial action of $\k C_n$ on $A$. Then, for each $w \in A$ such that
$g^i \cdot w =q^{-i}(g^i \cdot 1_A)w$,
	for all $i,j \in \I_{0,n-1}$, the identity
	$$ g^i \cdot (w^\ell (g^j \cdot a) w^k) = q^{-i(\ell + k)}(g^i \cdot 1_A)w^\ell(g^{i+j} \cdot a) w^k$$
	holds for all $a \in A$, $\ell, k \geq 0$. 
In particular, $ g^i \cdot w^k = q^{-ik}(g^i \cdot 1) w^k$.
	
	Moreover, if the partial action is symmetric, 	then $$ g^i \cdot (w^\ell (g^j \cdot a) w^k) = q^{-i(\ell + k)}w^\ell(g^{i+j} \cdot a) w^k(g^i \cdot 1_A).$$	
		\end{cor}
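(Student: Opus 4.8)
The plan is to reduce the whole identity to repeated use of the axioms (PA.2) and (PA.3) applied to the group-like element $g^i$, followed by a cleanup of idempotent factors via Lemma \ref{idempotente_comuta}. Write $e := g^i \cdot 1_A$ throughout. Since $g^i \in G(\k C_n)$ we have $\Delta(g^i) = g^i \o g^i$, so (PA.2) specializes to multiplicativity $g^i \cdot (bc) = (g^i \cdot b)(g^i \cdot c)$, and (PA.3) specializes to $g^i \cdot (g^j \cdot a) = e\,(g^{i+j} \cdot a)$. Iterating the former gives $g^i \cdot w^m = (g^i \cdot w)^m$ for every $m \geq 0$; the standing hypothesis on $w$ is precisely the hypothesis of Lemma \ref{lema_ind}, which then yields $g^i \cdot w^m = q^{-im}\,e\,w^m$ (the case $m=0$ being the trivial $g^i \cdot 1_A = e$). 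This last identity already establishes the ``in particular'' claim $g^i \cdot w^k = q^{-ik}(g^i \cdot 1_A)\,w^k$.

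Next I would split the three-fold product by multiplicativity and substitute the two building blocks above. Concretely,
\[
g^i \cdot \big( w^\ell (g^j \cdot a)\, w^k \big) = (g^i \cdot w^\ell)\,\big(g^i \cdot (g^j \cdot a)\big)\,(g^i \cdot w^k),
\]
and inserting $g^i \cdot w^\ell = q^{-i\ell} e\,w^\ell$, $\ g^i \cdot (g^j \cdot a) = e\,(g^{i+j}\cdot a)$, and $g^i \cdot w^k = q^{-ik} e\,w^k$ turns the right-hand side into
\[
q^{-i(\ell+k)}\; e\,w^\ell \cdot e\,(g^{i+j}\cdot a) \cdot e\,w^k .
\]

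The final step collapses the three copies of $e$ into the single leftmost one using Lemma \ref{idempotente_comuta}(i), i.e. $e\,b\,e = e\,b$. Applying it first to the central block $e\,(g^{i+j}\cdot a)\,e = e\,(g^{i+j}\cdot a)$ removes the rightmost $e$, and then to $e\,w^\ell\,e = e\,w^\ell$ removes the middle one, leaving exactly $q^{-i(\ell+k)}\,(g^i\cdot 1_A)\,w^\ell (g^{i+j}\cdot a)\, w^k$, as asserted. For the symmetric case, since the partial action of $\k C_n$ is symmetric, Lemma \ref{idempotente_comuta}(ii) shows that $e = g^i \cdot 1_A$ is central in $A$; commuting it past $w^\ell (g^{i+j}\cdot a)\,w^k$ to the far right in the formula just proved immediately gives the second identity.

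The only delicate point — and the main (minor) obstacle — is the order in which Lemma \ref{idempotente_comuta}(i) is applied in the cleanup: one must reduce the central block $e\,(g^{i+j}\cdot a)\,e$ \emph{before} reducing $e\,w^\ell\,e$, so that all three idempotent factors genuinely coalesce into the single factor $g^i \cdot 1_A$ appearing in the statement rather than leaving a stray $e$ between $(g^{i+j}\cdot a)$ and $w^k$.
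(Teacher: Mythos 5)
Your proof is correct and follows essentially the same route as the paper: split the product using multiplicativity of the group-like $g^i$ together with (PA.3), apply Lemma \ref{lema_ind} to the powers of $w$, and collapse the idempotent factors $g^i \cdot 1_A$ via Lemma \ref{idempotente_comuta}(i). Your handling of the symmetric case by invoking centrality of $g^i \cdot 1_A$ from Lemma \ref{idempotente_comuta}(ii) is a harmless shortcut for what the paper dismisses as ``analogous,'' and your worry about the order of the idempotent reductions is unnecessary, since Lemma \ref{idempotente_comuta}(i) applied with $a = w^\ell(g^{i+j}\cdot a)$ removes a stray middle $e$ in either order.
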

\begin{proof}
First, since $g^i$ is a group-like element, then $g^i \cdot a^{\ell} = (g^i \cdot a)^{\ell}$ for all $a \in A$. Now, notice that for each $w \in A$ such that $g^i \cdot w =q^{-i}(g^i \cdot 1_A)w$,
\begin{eqnarray*}
 g^i \cdot (w^\ell (g^j \cdot a) w^k)&=& (g^i \cdot w)^{\ell} (g^i \cdot 1_A)(g^{i+j} \cdot a)(g^i \cdot w)^{k}\\ 
 &\stackrel{\ref{lema_ind}}{=} &  q^{-i\ell}(g^i \cdot 1_A) w^{\ell}(g^i \cdot 1_A)(g^{i+j} \cdot a) q^{-ik}(g^i \cdot 1_A) w^{k}\\
 &\stackrel{\ref{idempotente_comuta}(i)}{=}& q^{-i(\ell + k)} (g^i \cdot 1_A) w^\ell(g^{i+j} \cdot a) w^k,
\end{eqnarray*}
for all $a \in A$, $\ell, k \geq 0$.

The symmetric part is analogous.
\end{proof}

\begin{prop}\label{prop_volta}
	Let $\cdot : \k C_n \otimes A \longrightarrow A$ be a partial action of $\k C_n$ on $A$, and suppose $g \cdot 1_A = 0$.
	Then, for each $w \in A$ such that $w^n \in Z(A)$ and $g^i \cdot w = q^{-i}(g^i \cdot 1_A)w$, for all $i \in \I_{0,n-1}$, the linear map $\cdot : T_n(q) \otimes A \longrightarrow A$,  defined as 
	\begin{align}\label{formula}
	g^ix^j \cdot a = q^{-ij} \sum_{k=0}^j (-1)^k q^{- \frac{k(k-1)}{2}} { j \choose k }_{q^{-1}} w^{j-k}(g^{i+k} \cdot a) w^k,
	\end{align}
	is a partial action of $T_n(q)$ on $A$.
\end{prop}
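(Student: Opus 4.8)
The plan is to check the three defining axioms (PA.1), (PA.2) and (PA.3) of a partial action directly on the canonical basis $\{g^ix^j : 0\le i,j<n\}$ of $T_n(q)$ and then extend bilinearly. I would use throughout that $g\cdot 1_A=0$ forces $g\cdot a=0$ for every $a\in A$ (the same argument as at the start of the proof of Proposition \ref{formula_eh_acao_parcial}), that each $g^i\cdot 1_A$ is an idempotent, and the absorption and commutation rules for $w$ furnished by Lemma \ref{idempotente_comuta}, Lemma \ref{lema_ind} and Corollary \ref{cor_condw}. Axiom (PA.1) is immediate, since for $i=j=0$ the formula \eqref{formula} collapses to $1_{T_n(q)}\cdot a=g^0\cdot a=a$.

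A useful preliminary is to record that the closed expression \eqref{formula} satisfies the recursion
$$g^ix^{j+1}\cdot a=q^{-i}\big(w\,(g^ix^j\cdot a)-(g^{i+1}x^j\cdot a)\,w\big),$$
which follows from the $q$-Pascal rule \eqref{6.7} by the very computation carried out in Proposition \ref{formula_eh_acao_parcial}, with $w$ in the role of $x\cdot 1_A$. Rewritten as $w\,(g^ix^j\cdot a)=q^{i}(g^ix^{j+1}\cdot a)+(g^{i+1}x^j\cdot a)\,w$, this is the device that absorbs the stray factors of $w$ produced when \eqref{formula} is substituted into the axioms.

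For (PA.2) I would expand both $g^ix^j\cdot(ab)$ and $\sum_{\ell=0}^j{j\choose\ell}_q(g^{i+\ell}x^{j-\ell}\cdot a)(g^ix^\ell\cdot b)$ via \eqref{formula}, using $g^m\cdot(ab)=(g^m\cdot a)(g^m\cdot b)$ and Corollary \ref{cor_condw} to put each monomial $w^{p}(g^{m}\cdot c)\,w^{r}$ into a common normal form. The remaining task is a matching of $q$-scalar coefficients; after the cross terms telescope --- as one already sees for $j=1$, where $g^ix\cdot(ab)=(g^ix\cdot a)(g^i\cdot b)+(g^{i+1}\cdot a)(g^ix\cdot b)$ holds by a single cancellation --- it reduces to the identities of Lemma \ref{prodqbinom}, principally \eqref{Lema2_2}. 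Equivalently, (PA.2) can be obtained by induction on $j$ directly from the recursion above.

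The main obstacle is (PA.3). Here I would expand $g^ix^j\cdot(g^sx^t\cdot a)$ by applying \eqref{formula} twice and collapsing the inner group action with Corollary \ref{cor_condw}, and expand the right-hand side $\sum_{\ell=0}^j{j\choose\ell}_q(g^{i+\ell}x^{j-\ell}\cdot 1_A)(g^ix^\ell g^sx^t\cdot a)$ using the straightening $g^ix^\ell g^sx^t=q^{s\ell}g^{i+s}x^{\ell+t}$ in $T_n(q)$. After normalizing both sides with Lemma \ref{idempotente_comuta} and Corollary \ref{cor_condw}, what remains is a purely combinatorial identity in the $q$-binomials, and this is exactly where the deepest identities \eqref{lema_acao_da_acao} --- governing the ``action of the action'' --- and \eqref{Lema2_10} enter. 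The genuine difficulty, beyond this bookkeeping, is the appearance on the right-hand side of monomials $g^{i+s}x^{\ell+t}$ with $n\le \ell+t<2n$, which are zero in $T_n(q)$ because $x^n=0$; the substituted formula must reproduce this vanishing. This is precisely where the hypothesis $w^n\in Z(A)$ is used: centrality of $w^n$, combined with \eqref{radford} and the root-of-unity relation \eqref{igualdade_q_n} (which together give $g^ix^m\cdot a=0$ for $m\ge n$), lets the high powers of $w$ be handled as central factors, so that the scalar $q$-identities valid over a base field in \cite{taft_corpo_revista} carry over to the present noncommutative setting and the excess terms cancel.
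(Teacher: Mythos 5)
Your overall architecture is the same as the paper's, up to packaging: the paper verifies (PA.1) and then the single combined condition (PA.2$^\prime$) (which it notes is equivalent to (PA.2) plus (PA.3)), by expanding \eqref{formula}, normalizing every monomial $w^{p}(g^{m}\cdot c)\,w^{r}$ with Corollary \ref{cor_condw}, fixing a vector, matching the $q$-scalar coefficients, and splitting into the cases $j+s<n$ and $j+s\geq n$; your separate checks of (PA.2) and (PA.3) just do this bookkeeping twice. Your preliminary recursion $g^ix^{j+1}\cdot a=q^{-i}\bigl(w\,(g^ix^j\cdot a)-(g^{i+1}x^j\cdot a)\,w\bigr)$ is indeed valid for the closed formula (it is the computation of Proposition \ref{formula_eh_acao_parcial} read in reverse via \eqref{6.7}), and the induction it yields for (PA.2) works, since there the $x$-degrees never leave $\I_{0,n-1}$. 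Case 1 of (PA.3) also goes through as you describe, resting on \eqref{Lema2_2} and \eqref{lema_acao_da_acao} (note that \eqref{Lema2_10}, which you invoke, is actually used in the symmetry result, Theorem \ref{simetria_taft}, not here).

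The genuine gap is in your treatment of the wrap-around case $\ell+s\geq n$, which is the heart of the proposition. First, the parenthetical claim that \eqref{radford} and \eqref{igualdade_q_n} ``give $g^ix^m\cdot a=0$ for $m\geq n$'' is not meaningful: \eqref{formula} is only defined for $j<n$, and nothing on the left-hand side needs to vanish. What actually happens is that on the right-hand side the terms $g^{i+t}x^{\ell+s}\cdot b$ with $\ell+s\geq n$ are zero in $T_n(q)$, while the left-hand expansion of $g^ix^j\cdot(a(g^tx^s\cdot b))$ still produces the corresponding monomials, now with the powers of $w$ redistributed using centrality of $w^n$; the role of \eqref{radford} is only to show that, for a fixed target vector, a unique index ($\theta=k_0+\ell_0-n$ in the paper's notation) survives. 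Second, and more seriously, after this normalization the two sides agree only because of the identity \eqref{identidade_R_L} of Lemma \ref{Lema_R_L_simetria}\,(i), which is \emph{new to this paper} and is proved by its own two-case induction; it is not among the identities imported from \cite{taft_corpo_revista} (Lemma \ref{prodqbinom}), and it does not ``carry over'' from the base-field setting: there the non-global partial actions satisfy $g^i\cdot 1_{\k}=\delta_{i,0}$, which collapses all the relevant sums, so the base-field computation never tests this identity, whereas here the partial $C_n$-action can be genuinely nontrivial. So you correctly locate where $w^n\in Z(A)$ enters, but the decisive combinatorial cancellation is asserted rather than proved, and the source you point to cannot supply it.
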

\begin{proof}	Since (PA.1) clearly holds, it is sufficient to check condition (PA.2$^{\prime}$):
$$g^ix^j \cdot (a(g^t x^s \cdot b)) = ((g^ix^j)_1 \cdot a)((g^ix^j)_2 g^t x^s \cdot b).$$
We will proceed by fixing the vectors and checking the scalars. On one hand, 
\begin{align*}
& g^ix^j \cdot (a(g^tx^s \cdot b)) =  q^{-ij} \sum_{k=0}^j (-1)^{k} q^{\frac{-k(k-1)}{2}} {j \choose k}_{q^{-1}} w^{j-k}  g^{i+k}\cdot(a(g^tx^s \cdot b)) w^{k}\\
& = q^{-ij-st} \sum_{k=0}^j\sum_{\ell=0}^s (-1)^{k+\ell} q^{- \frac{k(k-1)}{2}}q^{\frac{-\ell(\ell-1)}{2}} {j \choose k}_{q^{-1}}{s \choose \ell }_{q^{-1}} w^{j-k} (g^{i+k}\cdot a)\\ 
 & \times (g^{i+k}\cdot(w^{s-\ell}(g^{t+\ell} \cdot b) w^\ell))w^{k}\\
 & \stackrel{\ref{cor_condw}}{=} q^{-ij-s(t+i)} \sum_{k=0}^j\sum_{\ell=0}^s (-1)^{k+\ell} q^{-sk} q^{- \frac{k(k-1)}{2}} q^{\frac{-\ell(\ell-1)}{2}} {j \choose k}_{q^{-1}}{s \choose \ell }_{q^{-1}}\\ 
 & \times w^{j-k} (g^{i+k}\cdot a) w^{s-\ell}(g^{i+t+k+\ell} \cdot b) w^{k+\ell}.
\end{align*}

Let $ k_0 \in\I_{0, j}$ and $ \ell_0\in\I_{0, s}$ be fixed elements, thus we obtain the vector  
\begin{equation}\label{eq_vetor}
w^{j-k_0} (g^{i+k_0} \cdot a) w^{s-\ell_0} (g^{i+t+k_0+\ell_0} \cdot b) w^{k_0+\ell_0}
\end{equation} with scalar
\begin{equation}\label{eq_lado1}
q^{-ij-s(t+i)} \left( (-1)^{k_0+\ell_0} q^{-sk_0} q^{- \frac{k_0(k_0-1)}{2}} q^{\frac{-\ell_0(\ell_0-1)}{2}} {j \choose k_0}_{q^{-1}}{s \choose \ell_0}_{q^{-1}} \right).
\end{equation}
On the other hand, 
\begin{align*}
& ((g^ix^j)_1 \cdot a)((g^ix^j)_2g^tx^s \cdot b) 
=  \  \sum_{\ell =0}^j {j \choose \ell}_q  (g^{i+\ell}x^{j-\ell} \cdot a) ( g^ix^\ell g^t x^s \cdot b) \\
\stackrel{\ref{id_q}}{=} &  \  \sum_{\ell=0}^j q^{\ell(j-\ell)} q^{\ell t} {j \choose \ell}_{q^{-1}} (g^{i+\ell}x^{j-\ell} \cdot a) ( g^{i+t}x^{\ell + s} \cdot b).
\end{align*}

Due to term $g^{i+t}x^{\ell+s} \cdot b$, we have to proceed by cases:

\underline{Case 1: $j+s < n$.}
\begin{align*}
& \sum_{\ell=0}^j q^{\ell(j-\ell)} q^{\ell t} {j \choose \ell}_{q^{-1}} (g^{i+\ell}x^{j-\ell} \cdot a) ( g^{i+t}x^{\ell + s} \cdot b)\\
= & \sum_{\ell=0}^j  \sum_{r=0}^{j-l} \sum_{\theta=0}^{\ell+s} q^{-(i+\ell)(j-\ell)} q^{-(i+t)(\ell+s)}q^{\ell(j-\ell)} q^{\ell t} (-1)^{r+\theta} q^{\frac{-r(r-1)}{2}} q^{\frac{-\theta(\theta-1)}{2}} \\ 
\times & {j \choose \ell}_{q^{-1}} \!\!{j-\ell \choose r}_{q^{-1}} \!\!{\ell+s \choose \theta}_{q^{-1}} \!\!\!\!\!\!\! w^{j-(\ell+r)}  (g^{i+\ell+r}\cdot a) w^{r+\ell+s-\theta} (g^{i+t+\theta}\cdot b)w^{\theta}\\
= & \ q^{-ij-s(t+i)} \sum_{\ell=0}^j  \sum_{r=0}^{j-l} \sum_{\theta=0}^{\ell+s}(-1)^{r+\theta} q^{\frac{-r(r-1)}{2}} q^{\frac{-\theta(\theta-1)}{2}} {j \choose \ell}_{q^{-1}} {j-\ell \choose r}_{q^{-1}} \\ 
\times & {\ell+s \choose \theta}_{q^{-1}} w^{j-(\ell+r)}  (g^{i+\ell+r}\cdot a) w^{r+\ell+s-\theta} (g^{i+t+\theta}\cdot b)w^{\theta}\\
\stackrel{k=\ell+r}{=} & q^{-ij-s(t+i)} \sum_{\ell=0}^j  \sum_{k=\ell}^{j} \sum_{\theta=0}^{\ell+s}(-1)^{k-\ell+\theta} q^{\frac{-(k-\ell)(k-\ell-1)}{2}} q^{\frac{-\theta(\theta-1)}{2}} {j \choose \ell}_{q^{-1}}  \\
\times & {j-\ell \choose k-\ell}_{q^{-1}} {\ell+s \choose \theta}_{q^{-1}} w^{j-k}  (g^{i+k}\cdot a) w^{k+s-\theta} (g^{i+t+\theta}\cdot b)w^{\theta}.
\end{align*}

Setting $0\leq k_0\leq j$, we have that each $0 \leq \ell \leq j$ determines the vector in \eqref{eq_vetor}, considering $k=k_0$ and $\theta=k_0+\ell_0$, with scalar
\begin{align*}
& q^{-ij-s(t+i)} \left(\sum_{\ell=0}^{k_0}  (-1)^{\ell_0-\ell} q^{\frac{-(k_0-\ell)(k_0-\ell-1)}{2}} q^{\frac{-(k_0+\ell_0)(k_0+\ell_0-1)}{2}} \right. \\
\times & \left. {j \choose \ell}_{q^{-1}} {j-\ell \choose k_0-\ell}_{q^{-1}} {\ell+s \choose k_0+\ell_0}_{q^{-1}} \right) \\
\stackrel{\ref{prodqbinom}}{=} & \ \ q^{-ij-s(t+i)} q^{\frac{-k_0(k_0-1)}{2}} q^{\frac{-\ell_0(\ell_0-1)}{2}} (-1)^{\ell_0} {j \choose k_0}_{q^{-1}}\\
\times & \left (\sum_{\ell=0}^{k_0}  (-1)^{\ell} q^{\frac{-k_0(k_0-1)}{2}}q^{\frac{-\ell(\ell-1)}{2}} q^{\frac{-k_0(k_0-1)}{2}} q^{\ell k_0} q^{-\ell_0k_0}  {k_0 \choose \ell}_{q^{-1}} {\ell+s \choose k_0+\ell_0}_{q^{-1}} \right).  
\end{align*}

Therefore, looking to the highlighted scalar in \eqref{eq_lado1}, we just need to see that
\begin{align*}	
& \sum_{\ell=0}^{k_0}  (-1)^{k_0-\ell} q^{\frac{-\ell(\ell-1)}{2}+\ell k_0} q^{\frac{-k_0(k_0-1)}{2}}  q^{-\ell_0k_0}  {k_0 \choose \ell}_{q^{-1}} {\ell+s \choose k_0+\ell_0}_{q^{-1}} \\
=& \  q^{-sk_0} {j \choose k_0}_{q^{-1}}{s \choose \ell_0}_{q^{-1}}, 
\end{align*}
but it holds by Lemma  \ref{lema_acao_da_acao}.

\underline{Case 2: $j+s \geq n$.}
\begin{align*}
&    \sum_{\ell=0}^j q^{\ell(j-\ell)} q^{\ell t} {j \choose \ell}_{q^{-1}} (g^{i+\ell}x^{j-\ell} \cdot a) ( g^{i+t}x^{\ell + s} \cdot b)\\
= & \ q^{-ij-s(t+i)} \sum_{\ell =0}^{n-s-1} \sum_{r=\ell}^{j} \sum_{\theta=0}^{r+s} (-1)^{r - \ell+ \theta} q^{\frac{-(r - \ell)(r - \ell-1)}{2}-\frac{\theta(\theta-1)}{2}} {j \choose \ell}_{q^{-1}} {j-\ell \choose r - \ell}_{q^{-1}} \\
\times & {\ell+ s \choose \theta}_{q^{-1}} w^{j- r} (g^{i+r} \cdot a) w^{r+s-\theta} (g^{i+t+\theta} \cdot b) w^{\theta}.
\end{align*}

Note that, since $0 \leq \ell \leq n-s-1$, we can change the upper bound of the summation in $\theta$ for $n-1$ instead $r+s$, due to ${\ell+ s \choose \theta}_{q^{-1}}$:
\begin{align*}
& q^{-ij-s(t+i)} \sum_{\ell =0}^{n-s-1} \sum_{r=\ell}^{j} \sum_{\theta=0}^{n-1} (-1)^{r - \ell+ \theta} q^{\frac{-(r - \ell)(r - \ell-1)}{2}-\frac{\theta(\theta-1)}{2}} {j \choose \ell}_{q^{-1}} {j-\ell \choose r - \ell}_{q^{-1}} \\
\times & {\ell+ s \choose \theta}_{q^{-1}} w^{j- r} (g^{i+r} \cdot a) w^{r+s-\theta} (g^{i+t+\theta} \cdot b) w^{\theta}.
\end{align*}

Now, we will investigate when the vector highlighted in \eqref{eq_vetor} appears in the above expression. We can assume $k_0+\ell_0 \geq n$, since $k_0+\ell_0 < n$ is included in Case 1. Thus, $k_0+s \geq k_0 + \ell_0 \geq n$, what leads to $0 \leq n-s-1 < k_0$. So, to obtain the referred vector, there is a unique choice for $r$, and so only one for $\theta$ too, namely $r=k_0$ and $\theta= k_0+\ell_0-n$ (where the latter uses \eqref{radford}). Hence, 
\begin{align*}
& q^{-ij-s(t+i)} \sum_{\ell =0}^{n-s-1}  (-1)^{\ell_0-n-\ell} q^{\frac{-(k_0 - \ell)(k_0 - \ell-1)}{2}-\frac{(k_0 + \ell_0-n)(k_0 + \ell_0-n-1)}{2}} \\
\times & {j \choose \ell}_{q^{-1}} {j-\ell \choose k_0 - \ell}_{q^{-1}} {\ell+ s \choose k_0 + \ell_0-n}_{q^{-1}}\\
\times & \ w^{j- k_0} (g^{i+k_0} \cdot a)  w^{s-\ell_0+n} (g^{i+t+k_0+\ell_0-n} \cdot b) w^{k_0+\ell_0-n}.
\end{align*}

The vector in \eqref{eq_vetor} and the one in the above expression are the same due the commutative of $w^n$. Therefore, comparing the above scalar with the one highlighted in \eqref{eq_lado1}, we want to see that the following equality is true
\begin{align*}
& (-1)^{k_0+\ell_0} q^{-sk_0} q^{\frac{-k_0(k_0-1)}{2}-\frac{\ell_0(\ell_0-1)}{2}} {j \choose k_0}_{q^{-1}} {s \choose \ell_0}_{q^{-1}} = \sum_{\ell =0}^{n-s-1}  (-1)^{\ell_0-n-\ell}\\
 \times & q^{\frac{-(k_0 - \ell)(k_0 - \ell-1)}{2}-\frac{(k_0 + \ell_0-n)(k_0 + \ell_0-n-1)}{2}} {j \choose \ell}_{q^{-1}} {j-\ell \choose k_0 - \ell}_{q^{-1}} {\ell+ s \choose k_0 + \ell_0-n}_{q^{-1}}.
\end{align*}

But, applying identities \eqref{Lema2_2}, \eqref{igualdade_q_n} and \eqref{lema_acao_da_acao} to the above expression, it is reduced to
\begin{equation}\label{eq_final}
q^{\frac{k_0(k_0-1)}{2}+k_0(\ell_0-s)}\! {s \choose \ell_
	0}_{q^{-1}} \!\!\!\!=\!\!\!\sum_{\ell =n-s}^{k_0}  \!\!\!(-1)^{k_0-\ell} q^{\frac{-\ell(\ell+1)}{2}+\ell k_0} {k_0 \choose \ell}_{q^{-1}} \!\!{\ell+ s-n \choose k_0 + \ell_0-n}_{q^{-1}}.
\end{equation}

Since $n-s \leq \ell \leq k_0$ and $0 \leq \ell-n+s \leq k_0-n+s$, the right side in above expression can be rewritten replacing the variable $\ell$ to $L+n-s$ as
\begin{align*}
\sum_{L =0}^{k_0-n+s} \!\! (-1)^{k_0-L-n+s} q^{\frac{-(L+n-s)(L+n-s+1-2k_0)}{2}} {k_0 \choose L+n-s}_{q^{-1}} {L \choose k_0 + \ell_0-n}_{q^{-1}}\\
\stackrel{\eqref{igualdade_q_n}}{=} \!\!\sum_{L =0}^{k_0-n+s} \!\! (-1)^{k_0-L+s+1} q^{\frac{-(L-s)(L-s+1-2k_0)}{2}} {k_0 \choose L+n-s}_{q^{-1}} {L \choose k_0 + \ell_0-n}_{q^{-1}}.
\end{align*}

Therefore, \eqref{eq_final} follows from identity \eqref{identidade_R_L}.
\end{proof}

\begin{rem}
	Notice, in Proposition \ref{prop_volta}, that $x\cdot 1_A=w$, $x\cdot a=wa$, and $g^ix^j \cdot a$ is determined by the partial group action and this fixed element $w$. Hence, a partial action $T_n(q)$ on an algebra $A$ depends only on a partial group action, an element $w \in A$, and a compatibility relation between them.
\end{rem}

\begin{thm}\label{taft_principal}
	Let $\cdot:T_n(q) \o A \longrightarrow A$ be a linear map, and suppose $g \cdot 1_A \in \{0, 1_A\}$.
	Then, $\cdot$ is a partial action of $T_n(q)$ on $A$ if and only if  $\cdot$ is a global action, or $\cdot$ is a linear map such that $\cdot|_{\k C_n \otimes A}$
	 is a partial group action such that $g \cdot 1_A = 0$ and
	\begin{itemize}
		\item[(i)] $ g^ix^j \cdot a = q^{-ij} \sum_{k=0}^j (-1)^k q^{- \frac{k(k-1)}{2}} { j \choose k }_{q^{-1}} (x \cdot 1_A)^{j-k}(g^{i+k} \cdot a)(x \cdot 1_A)^k;$
		\item[(ii)]  $(x \cdot 1_A)^n \in Z(A)$; 
		\item[(iii)] $g^i \cdot (x \cdot 1_A) = q^{-i}(g^i \cdot 1_A)(x \cdot 1_A)$,
	\end{itemize}		
		for all $0 \leq i,j <n, a \in A$.
\end{thm}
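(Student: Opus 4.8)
The plan is to prove the equivalence by assembling the results already established in this section, splitting the analysis according to the hypothesis $g \cdot 1_A \in \{0, 1_A\}$. The statement is an ``if and only if'' whose right-hand side is a disjunction (global action \emph{or} the partial-group-action conditions), and each disjunct will correspond to one of the two possible values of $g \cdot 1_A$. Thus neither direction requires a fresh computation: the substantive work has been done in Propositions \ref{parte_global}, \ref{formula_eh_acao_parcial}, \ref{prop_volta} and Corollary \ref{corolario_ida}.

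For the forward implication I would assume $\cdot$ is a partial action of $T_n(q)$ on $A$. If $g \cdot 1_A = 1_A$, then Proposition \ref{parte_global} yields directly that $\cdot$ is a global action, giving the first alternative. If instead $g \cdot 1_A = 0$, I first note that $\k C_n = \k\, G(T_n(q))$ is a sub-Hopf-algebra of $T_n(q)$: it contains $1_{T_n(q)}$ and is a subcoalgebra, since each $g^i$ is group-like and hence $\Delta(g^i) = g^i \o g^i \in \k C_n \o \k C_n$. Consequently conditions (PA.1), (PA.2) and (PA.3) restrict verbatim to $\k C_n \o A$, so $\cdot|_{\k C_n \o A}$ is a partial action of $\k C_n$. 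Formula (i) is then exactly the content of Proposition \ref{formula_eh_acao_parcial}, while (ii) and (iii) are precisely Corollary \ref{corolario_ida}(i) and (ii). This establishes the second alternative.

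For the reverse implication, if $\cdot$ is a global action then it is automatically a (symmetric) partial action, as recalled in the preliminaries. Otherwise, suppose $\cdot|_{\k C_n \o A}$ is a partial group action with $g \cdot 1_A = 0$ and that (i)--(iii) hold. Here I would set $w := x \cdot 1_A \in A$. Condition (ii) says $w^n \in Z(A)$, and condition (iii) reads $g^i \cdot w = q^{-i}(g^i \cdot 1_A)\, w$ for all $i \in \I_{0,n-1}$, so $w$ satisfies exactly the hypotheses required in Proposition \ref{prop_volta}. By that proposition the formula \eqref{formula} with this $w$ defines a partial action of $T_n(q)$ on $A$. Since formula (i) is identical to \eqref{formula} after substituting $w = x \cdot 1_A$, and since (i) specifies $g^i x^j \cdot a$ on every basis element $g^i x^j$ (reducing to the partial group action $g^i \cdot a$ when $j = 0$), the given map $\cdot$ coincides with the partial action produced by Proposition \ref{prop_volta}. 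Hence $\cdot$ is a partial action of $T_n(q)$.

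The only points that require genuine care are verifying, in the forward direction, that restriction to the sub-Hopf-algebra $\k C_n$ preserves the partial-action axioms, and checking, in the reverse direction, that formula (i) recovers the \emph{entire} map defined in Proposition \ref{prop_volta}, so that no compatibility beyond (i)--(iii) is silently needed. Both are routine once the roles of the cited results are pinned down; the remainder of the argument is pure bookkeeping, and I expect the main conceptual obstacle to lie already in the earlier Propositions \ref{formula_eh_acao_parcial} and \ref{prop_volta} rather than in this assembly step.
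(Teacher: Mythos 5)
Your proof is correct and follows essentially the same route as the paper, which proves the theorem by exactly this assembly: Propositions \ref{parte_global}, \ref{formula_eh_acao_parcial} and Corollary \ref{corolario_ida} for the forward direction, and Proposition \ref{prop_volta} for the converse. The details you fill in (that the axioms restrict to the sub-Hopf-algebra $\k C_n$, and that formula (i) with $w = x \cdot 1_A$ reproduces the map of Proposition \ref{prop_volta}) are left implicit in the paper but are exactly the right bookkeeping.
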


	\begin{proof}
One part follows by Propositions \ref{parte_global}, \ref{formula_eh_acao_parcial} and Corollary \ref{corolario_ida}, and the converse follows by Proposition \ref{prop_volta} .
\end{proof}

\begin{cor}\label{idempotente_trivial}
	Suppose that $A$ is an algebra that has only the trivial idempotents. Then, Theorem \ref{taft_principal} characterizes all partial actions of $T_n(q)$ on A.
\end{cor}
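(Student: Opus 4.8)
The plan is to show that the hypothesis $g \cdot 1_A \in \{0, 1_A\}$ of Theorem \ref{taft_principal} is automatically satisfied as soon as $A$ has only trivial idempotents. Once this is done, Theorem \ref{taft_principal} applies to \emph{every} partial action of $T_n(q)$ on $A$, with no extra standing assumption, and therefore describes all of them.

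The key observation is that $g \cdot 1_A$ is always an idempotent of $A$, for any partial action. Indeed, since $g$ is a group-like element, $\Delta(g) = g \o g$, so condition (PA.2) specializes to $g \cdot ab = (g \cdot a)(g \cdot b)$ for all $a,b \in A$. Taking $a = b = 1_A$ yields
\[
g \cdot 1_A = g \cdot (1_A 1_A) = (g \cdot 1_A)(g \cdot 1_A) = (g \cdot 1_A)^2 .
\]
By the hypothesis on $A$, its only idempotents are $0$ and $1_A$, whence $g \cdot 1_A \in \{0, 1_A\}$ for any partial action $\cdot : T_n(q) \o A \longrightarrow A$.

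With this in hand, every partial action of $T_n(q)$ on $A$ falls precisely under the hypothesis of Theorem \ref{taft_principal}, so that theorem characterizes all of them, which is the assertion of the corollary. I do not expect any genuine obstacle here: the whole content is the idempotency of $g \cdot 1_A$, which is immediate from (PA.2) together with the fact that $g \in G(T_n(q))$, and the remainder is simply an invocation of Theorem \ref{taft_principal}. This also explains, a posteriori, why the dichotomy $g \cdot 1_A \in \{0,1_A\}$ is the natural hypothesis in that theorem.
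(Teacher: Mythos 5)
Your proof is correct and is exactly the intended argument: the paper states this corollary without proof precisely because the only content is the observation you make, namely that $g\cdot 1_A = (g\cdot 1_A)^2$ follows from (PA.2) applied with $\Delta(g)=g\otimes g$, so the trivial-idempotent hypothesis forces $g\cdot 1_A\in\{0,1_A\}$ and Theorem \ref{taft_principal} applies to every partial action.
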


	From the above result we have a complete characterization for partial actions of $T_n(q)$ on some families of algebras, such as division algebras, the tensorial algebra, the Weyl algebra and the universal enveloping algebra of a Lie algebra. 
	All the partial actions of $T_n(q)$ on such algebras are described in Theorem \ref{taft_principal} . 

\begin{cor}
		Let $\cdot:T_n(q) \o A \longrightarrow A$ be a linear map, and suppose $1 \cdot 1_A=1_A$ and $g^i \cdot 1_A =0$, for all $1 \leq i <n$.
	Then, $\cdot$ is a partial action of $T_n(q)$ on $A$ if and only if $(x \cdot 1_A)^n \in Z(A)$ and 
	\begin{eqnarray}\label{formula_corpo}
 g^{n-i}x^j \cdot a = (-1)^i q^{ \frac{i(i+1)}{2}} { j \choose i }_{q} (x \cdot 1_A)^{j-i}a(x \cdot 1_A)^i,
 	\end{eqnarray}
		for any $a \in A$ and $0 \leq i,j < n$.
\end{cor}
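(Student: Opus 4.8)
The plan is to read this corollary off as the specialization of Theorem \ref{taft_principal} to the case where the whole cyclic subgroup (except $g^0$) annihilates $1_A$. The hypothesis $g\cdot 1_A=0$ places us in the branch $g\cdot 1_A\in\{0,1_A\}$ of that theorem with $g\cdot 1_A=0$, and since $0\neq 1_A$ we are never in the global branch. The decisive preliminary observation is that the hypothesis $g^i\cdot 1_A=0$ for $1\le i<n$ in fact forces $g^i\cdot a=0$ for \emph{all} $a\in A$ and $1\le i<n$: writing $a=a\,1_A$ and using that $g^i$ is group-like, condition (PA.2) gives $g^i\cdot a=(g^i\cdot a)(g^i\cdot 1_A)=0$. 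Thus the partial group action $\cdot|_{\k C_n\otimes A}$ is completely pinned down, namely $g^i\cdot a=a$ when $i\equiv 0\pmod n$ and $g^i\cdot a=0$ otherwise.

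For the forward implication I would invoke Theorem \ref{taft_principal} (equivalently Proposition \ref{formula_eh_acao_parcial} and Corollary \ref{corolario_ida}(i)): as $g\cdot 1_A=0$, the map satisfies conditions (i)--(iii) there, and (ii) is precisely $(x\cdot 1_A)^n\in Z(A)$. In condition (i), written for $g^{m}x^j\cdot a$, the factor $g^{m+k}\cdot a$ vanishes unless $m+k\equiv 0\pmod n$; since $0\le m<n$ and $0\le k\le j<n$, for $m=n-i$ with $1\le i\le n-1$ the unique surviving summand is $k=i$ (and for $m\equiv 0$ only $k=0$ survives, giving the $i=0$ case, while the term is empty, consistently with ${j\choose i}_q=0$, whenever $i>j$). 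This collapses the sum to the single term with scalar $q^{-(n-i)j}(-1)^i q^{-\frac{i(i-1)}{2}}{j\choose i}_{q^{-1}}$ multiplying $(x\cdot 1_A)^{j-i}\,a\,(x\cdot 1_A)^{i}$. Rewriting ${j\choose i}_{q^{-1}}$ via \eqref{id_q} and using $q^{-nj}=1$ turns this scalar into $(-1)^i q^{\frac{i(i+1)}{2}}{j\choose i}_q$, which is exactly \eqref{formula_corpo}.

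For the converse I would set $w:=x\cdot 1_A$ and verify the hypotheses of Proposition \ref{prop_volta}. Evaluating the given formula \eqref{formula_corpo} at $j=0$, where ${0\choose i}_q=\delta_{i,0}$, recovers the partial group action $g^i\cdot a=a$ for $i\equiv 0$ and $g^i\cdot a=0$ otherwise, which is immediately checked to be a partial action of $\k C_n$ with $g\cdot 1_A=0$. Moreover $w^n\in Z(A)$ is assumed, and $g^i\cdot w=q^{-i}(g^i\cdot 1_A)w$ holds trivially, both sides being $w$ for $i\equiv 0$ and $0$ otherwise. Since with this partial group action the defining formula \eqref{formula} of Proposition \ref{prop_volta} collapses, by the same single-surviving-term computation, exactly to \eqref{formula_corpo}, the given map coincides with the partial action produced by Proposition \ref{prop_volta}, and is therefore itself a partial action.

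The only place any genuine computation occurs, and hence the main (mild) obstacle, is the scalar identification in the forward step: matching $q^{-(n-i)j}q^{-\frac{i(i-1)}{2}}{j\choose i}_{q^{-1}}$ with $q^{\frac{i(i+1)}{2}}{j\choose i}_q$, which reduces to a clean exponent bookkeeping once \eqref{id_q} and $q^n=1$ are used. Everything else is a matter of tracking which term of the Taft formula survives after the partial group action has been forced to be supported on $g^0$.
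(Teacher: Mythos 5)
Your proposal is correct and follows essentially the same route as the paper: both specialize Theorem \ref{taft_principal}, observe that $g^{i+k}\cdot a$ survives only when $i+k\equiv 0 \pmod n$ (so a single term $k=i$ remains), and convert the scalar via identity \eqref{id_q} together with $q^n=1$ to obtain \eqref{formula_corpo}. Your treatment is in fact slightly more careful than the paper's terse proof, since you make explicit both why $g^i\cdot 1_A=0$ forces $g^i\cdot a=0$ for all $a$ and how the converse is secured by verifying the hypotheses of Proposition \ref{prop_volta}, points the paper leaves implicit.
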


	\begin{proof}
		Since $g^i \cdot 1_A =0$, for all $1 \leq i <n$, then $g^{i+k}=0$ when $i+k\neq 0$ and $i+k\neq n$. Now, we analyze these two exceptional cases. By Theorem \ref{taft_principal}, we have
		\begin{itemize}
			\item If $i+k=0$, then $i=k=0$, \emph{i.e.},  $x^j=(x\cdot 1_A)^ja$, for all $0 \leq j <n$;
			
		\item If $i+k=n$, then $g^{i+k}\cdot a=a$ and so		
		$$ g^{n-k}x^j \cdot a = q^{-(n-k)j} (-1)^{k} q^{- \frac{k(k-1)}{2}} { j \choose k }_{q^{-1}} (x \cdot 1_A)^{j-k}a(x \cdot 1_A)^{k}.$$
		
		Then, by identity \eqref{id_q}, 
		
		$ g^{n-k}x^j \cdot a = (-1)^k q^{ \frac{k(k+1)}{2}} { j \choose k }_{q} (x \cdot 1_A)^{j-k}a(x \cdot 1_A)^k,$ for all $0\leq k<n$.
				\end{itemize}
	\end{proof}

\noindent{\textit{Remark.}} If $A= \k$, the above corollary  coincides with the partial action $\lambda_\alpha$, $\alpha\in\Bbbk$, of the result \cite[Theorem 3.1]{taft_corpo_revista}.

\begin{cor}\label{x_ponto_1_comuta}
	Let $\cdot:T_n(q) \o A \longrightarrow A$ be a linear map, and suppose $g \cdot 1_A \in \{0, 1_A\}$ and $x \cdot 1_A \in Z(A)$.
	Then, $\cdot$ is a partial action of $T_n(q)$ on $A$ if and only if  $\cdot$ is a global action, or $\cdot$ is a linear map such that $\cdot|_{\k C_n \otimes A}$
	is a partial group action such that $g \cdot 1_A = 0$ and, for all $i,j \in \I_{0,n-1}, a \in A$,
	\begin{itemize}
		\item[(i)] $ g^ix^j \cdot a = q^{-ij} (x \cdot 1_A)^j \sum_{k=0}^j (-1)^k q^{- \frac{k(k-1)}{2}} { j \choose k }_{q^{-1}}(g^{i+k} \cdot a);$
		\item[(ii)] $g^i \cdot (x \cdot 1_A) = q^{-i}(g^i \cdot 1_A)(x \cdot 1_A)$.
	\end{itemize}		
\end{cor}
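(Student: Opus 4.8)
The plan is to obtain this statement as a direct specialization of Theorem \ref{taft_principal}, observing that the extra hypothesis $x \cdot 1_A \in Z(A)$ does two things: it makes one of the theorem's three conditions automatic, and it collapses the theorem's action formula into the stated simpler form. Thus the only genuine content is to verify that these simplifications are \emph{reversible}, so that the resulting equivalence feeds both implications of the ``if and only if''. Since the standing hypotheses $g \cdot 1_A \in \{0,1_A\}$ match those of Theorem \ref{taft_principal}, I would begin by invoking that theorem verbatim: $\cdot$ is a partial action precisely when it is a global action (the alternative forced, via Proposition \ref{parte_global}, by $g \cdot 1_A = 1_A$) or when $g \cdot 1_A = 0$, $\cdot|_{\k C_n \otimes A}$ is a partial group action, and conditions (i)--(iii) of the theorem hold.

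The key observation is that, because $x \cdot 1_A \in Z(A)$, in the summand of the theorem's formula (i) the two flanking powers of $x \cdot 1_A$ may be merged and moved past $(g^{i+k} \cdot a)$:
\begin{equation*}
(x \cdot 1_A)^{j-k}(g^{i+k} \cdot a)(x \cdot 1_A)^k = (x \cdot 1_A)^{j}(g^{i+k} \cdot a),
\end{equation*}
so that $q^{-ij}\sum_{k=0}^j (-1)^k q^{-\frac{k(k-1)}{2}} {j \choose k}_{q^{-1}} (x\cdot 1_A)^{j-k}(g^{i+k}\cdot a)(x\cdot 1_A)^k$ becomes exactly the right-hand side of item (i) here. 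This rewriting is an equivalence, not merely an implication, under the standing hypothesis $x \cdot 1_A \in Z(A)$. Next, since $x \cdot 1_A \in Z(A)$ trivially forces $(x \cdot 1_A)^n \in Z(A)$, the theorem's condition (ii) is automatically satisfied and need not be listed. Finally, item (ii) here is literally the theorem's condition (iii).

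Assembling these, both directions follow at once. For the forward implication, if $\cdot$ is a partial action then Theorem \ref{taft_principal} gives either a global action or the data $g \cdot 1_A = 0$ together with (i)--(iii); in the nonglobal case the centrality of $x \cdot 1_A$ rewrites the theorem's (i) as item (i) here and identifies (iii) with item (ii) here, while (ii) of the theorem is redundant. For the converse, if $\cdot$ is global it is a partial action, and otherwise items (i) and (ii) here, combined with $(x \cdot 1_A)^n \in Z(A)$ (automatic from $x \cdot 1_A \in Z(A)$), reconstitute conditions (i)--(iii) of Theorem \ref{taft_principal}, whose converse (Proposition \ref{prop_volta}) yields that $\cdot$ is a partial action. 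I do not anticipate a substantive obstacle: the only point requiring care is to present the collapse of the flanking powers as a genuine equivalence so that it can be read in both directions, which is exactly where the hypothesis $x \cdot 1_A \in Z(A)$ is used.
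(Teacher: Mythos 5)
Your proposal is correct and matches the paper's (implicit) argument: the corollary is stated in the paper without a separate proof precisely because it is the specialization of Theorem \ref{taft_principal} you describe, where $x \cdot 1_A \in Z(A)$ collapses the flanking powers in condition (i), renders condition (ii) automatic, and leaves condition (iii) unchanged. Your care in noting that the rewriting is an equivalence, so that both directions of the ``if and only if'' survive the specialization, is exactly the right (and only) point of substance.
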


	In particular, if $A$ is a commutative algebra, then Theorem \ref{taft_principal} and Corollary \ref{x_ponto_1_comuta} are equivalent.

\begin{cor}
		Let $\cdot:T_n(q) \o A \longrightarrow A$ be a linear map, and suppose $g \cdot 1_A \in \{0, 1_A\}$ and that there exists $k \in \I_{2,n-1}$ such that $g^k \cdot a = a$, for all $a \in A$.
		Then, $\cdot$ is a partial action of $T_n(q)$ on $A$ if and only if  $\cdot$ is a global action, or $\cdot$ is a linear map where $\cdot|_{\k C_n \otimes A}$
		is a partial group action such that $g \cdot 1_A = 0$ and  $g^ix^j \cdot a = \delta_{j,0} (g^i \cdot a)$,
		for all $i,j \in \I_{0,n-1}, a \in A$.
\end{cor}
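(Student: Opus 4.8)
The plan is to read this corollary off Theorem \ref{taft_principal} combined with Corollary \ref{corolario_ida}(iii), since the standing hypothesis that $g^k \cdot a = a$ for some $k \in \I_{2,n-1}$ is precisely the trigger that forces $x \cdot 1_A = 0$. I would begin by invoking Theorem \ref{taft_principal}: under the assumption $g \cdot 1_A \in \{0, 1_A\}$, the map $\cdot$ is a partial action if and only if either it is a global action (the case $g \cdot 1_A = 1_A$, handled by Proposition \ref{parte_global}), or else $\cdot|_{\k C_n \otimes A}$ is a partial group action with $g \cdot 1_A = 0$ satisfying conditions (i)--(iii) of that theorem. It therefore suffices to analyze the partial branch and show that, under the extra hypothesis, conditions (i)--(iii) collapse to the single identity $g^i x^j \cdot a = \delta_{j,0}(g^i \cdot a)$.

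For the forward direction in the partial branch, I would apply Corollary \ref{corolario_ida}(iii): since there is $k \in \I_{2,n-1}$ with $g^k \cdot a = a$ for all $a \in A$, one concludes $x \cdot 1_A = 0$. Writing $w := x \cdot 1_A = 0$, every summand of formula (i) in Theorem \ref{taft_principal} carries both a factor $w^{j-k}$ and a factor $w^k$; since $w^m = 0$ for all $m \geq 1$, a nonzero summand would require $j-k = 0$ and $k = 0$ simultaneously, forcing $j = 0$. This yields exactly $g^i x^j \cdot a = \delta_{j,0}(g^i \cdot a)$. Conditions (ii) and (iii) of the theorem then hold automatically, as $(x \cdot 1_A)^n = 0 \in Z(A)$ and $g^i \cdot (x \cdot 1_A) = g^i \cdot 0 = 0 = q^{-i}(g^i \cdot 1_A)(x \cdot 1_A)$.

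For the converse, I would verify that any linear map for which $\cdot|_{\k C_n \otimes A}$ is a partial group action with $g \cdot 1_A = 0$ and $g^i x^j \cdot a = \delta_{j,0}(g^i \cdot a)$ is indeed a partial action of $T_n(q)$, by specializing Proposition \ref{prop_volta} to $w = 0$. The two hypotheses of that proposition, namely $w^n \in Z(A)$ and $g^i \cdot w = q^{-i}(g^i \cdot 1_A)w$, hold trivially for $w = 0$, and formula \eqref{formula} evaluated at $w = 0$ reproduces precisely $\delta_{j,0}(g^i \cdot a)$. Hence Proposition \ref{prop_volta} guarantees that the resulting map is a partial action.

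I do not expect any serious obstacle: the whole argument is a specialization of the machinery already assembled for Theorem \ref{taft_principal}. The only point demanding a moment of care is the power-collapse of formula (i) under $w = 0$ --- confirming that for $j \geq 1$ no value of $k \in \{0,\dots,j\}$ leaves both $w^{j-k}$ and $w^k$ nonzero --- which is immediate once one observes that $j - k = 0$ and $k = 0$ cannot hold simultaneously unless $j = 0$.
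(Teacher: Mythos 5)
Your proof is correct and follows essentially the same route as the paper: the paper's one-line proof deduces $x \cdot 1_A = 0$ from condition (iii) of Theorem \ref{taft_principal} (equivalently, via Corollary \ref{corolario_ida}(iii), which you cite), after which formula (i) collapses to $g^i x^j \cdot a = \delta_{j,0}(g^i \cdot a)$ exactly as you describe. You merely make explicit what the paper leaves implicit, namely the power-collapse of the formula at $w=0$ and the converse via Proposition \ref{prop_volta} specialized to $w=0$.
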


\begin{proof}
By Theorem \ref{taft_principal} (iii), we get $x \cdot 1_A =0$.
\end{proof}

\begin{exa}\label{ex_taft}
Every partial action of Sweedler's $4$-dimensional Hopf algebra on $A$, namely $\cdot : \mathbb{H}_{4} \o A \longrightarrow A$, such that $g \cdot 1_A =0$, is given by
	$$1\cdot a=a, \ \ \ g\cdot a=0, \ \ \ x\cdot a= w a \ \ \ \mbox{and} \ \ \ gx\cdot a=a w,$$
	for all $a\in A$, where $w = x\cdot 1_A$ satisfies $w^2 \in Z(A)$.
	
In particular, if $w \in Z(A)$, then for all $a\in A$, $$1\cdot a=a, \ \ \ g\cdot a=0 \ \ \ \mbox{and} \ \ \ x\cdot a=gx\cdot a=w a.$$

When $A=\Bbbk$, every $w \in \k$ characterizes such a partial action by the linear map $\lambda_w: \mathbb{H}_{4}\longrightarrow\Bbbk$, defined as $\lambda_w (1)=1_{\Bbbk}, \lambda_w(g)=0, \lambda_w(x)=\lambda_w(gx)=w.$
	\end{exa}

\begin{exa}
Every partial action $\cdot : T_3(q) \o A \longrightarrow A$,
such that $g \cdot 1_A =0$, is given by
	$$1\cdot a=a, \ \ \ g\cdot a=g^2\cdot a=0, \ \ \ x\cdot a=wa, \ \ \ gx\cdot a=0,$$ $$g^2x\cdot a=-qaw, \   x^2\cdot a=w^2a, \ gx^2\cdot a=aw^2,  \ g^2x^2\cdot a=waw,$$
for all $a\in A$, where $w = x\cdot 1_A$ satisfies $w^3 \in Z(A)$.
	
In particular, if $w \in Z(A)$, then, for all $a\in A$, we have
$$1\cdot a=a, \ \ \ g\cdot a=g^2\cdot a=0, \ \ \ x\cdot a=wa \ \ \ gx\cdot a=0,$$
$$g^2x\cdot a=-q wa, \ x^2\cdot a= gx^2\cdot a=g^2x^2\cdot a=w^2a.$$
	
When $A=\Bbbk$, every $w \in \k$ characterizes such a partial action by the linear map
$\lambda_w: T_3(q)\longrightarrow\Bbbk$, where $\lambda_w(g^i)=\delta_{i,0}$, $\lambda_w(x)=w$, $\lambda_w(gx)=0$, $\lambda_w(g^2x)= - q w$ and $\lambda_w(g^ix^2)=w^2$, for all $i \in \I_{0,2}$.
\end{exa}

\begin{exa}
Assume that $\cdot : T_4(q) \o A \longrightarrow A$ is a partial action such that $g \cdot 1_A =0$. Then,
 $$1\cdot a=a, \  g\cdot a=g^3\cdot a=0, \ g^2\cdot a=  \alpha_{g^2}(a), \ x\cdot a=wa,$$ $$gx\cdot a=q(g^2\cdot a)w, \ g^2x\cdot a=-w(g^2\cdot a), \ g^3x\cdot a=-qaw,$$ $$x^2\cdot a=w^2a-q(g^2\cdot a)w^2, \ gx^2\cdot a=(1-q)w(g^2\cdot a)w,$$ $$g^2x^2\cdot a= w^2(g^2\cdot a)-qaw^2, \ g^3x^2= (1-q)waw,$$ $$x^3\cdot a=w^3a-w(g^2\cdot a)w^2, \ gx^3\cdot a=-w^2(g^2\cdot a)w+aw^3,$$ $$g^2x^3\cdot a=-w^3(g^2\cdot a)+waw^2, \ g^3x^3\cdot a=w^2aw-(g^2\cdot a)w^3,$$ 
where $\alpha_{g^2}: D_{g^2} \longrightarrow D_{g^2}$, being $D_{g^2} = (g^2 \cdot 1_A) A$, is the linear map inherited from the partial group action $\k C_4$ on $A$, and any element $w \in A$ such that $w^4\in Z(A)$ and $g^2\cdot w=-(g^2\cdot 1_A)w$.

In particular, some interesting cases are:
\begin{enumerate}
	\item If $g^2 \cdot 1_A = 0$, then 
$$1\cdot a=a, \ \ g\cdot a=g^2\cdot a=g^3\cdot a=0, \ \ x\cdot a=wa, \ \ gx\cdot a=g^2x\cdot a=0,$$
$$ g^3x\cdot a=-qaw, \ x^2\cdot a=w^2a, \ gx^2\cdot a=0, \ g^2x^2\cdot a=-qaw^2, g^3x^2=(1-q)waw,$$
$x^3\cdot a=w^3a, \ \ gx^3\cdot a=aw^3, \ \ g^2x^3\cdot a=waw^2, \ \ g^3x^3\cdot a=w^2aw.$

\item If $\alpha_{g^2} = Id_{D_{g^2}}$, then 	for all $0\leq i\leq n-1$ and $1\leq j\leq n-1$,
	$$1\cdot a=a, \ \ \ g\cdot a=g^3\cdot a=0, \ \ \ g^2\cdot a=a, \ \ \ g^ix^j\cdot a=0.$$

\item If $A=\Bbbk$, then the subcases (1) and (2) describe all the partial actions of $T_4(\omega)$ on $\k$, as in \cite[Example 3.8]{taft_corpo_revista}.
\end{enumerate}
\end{exa}	

For a partial action to define a partial representation, it must be symmetric \cite[Example 3.5]{Partial_representations}.
Our next goal is to verify that the partial actions presented in Theorem \ref{taft_principal} are symmetric.

\begin{thm}\label{simetria_taft}
	Every partial action $\cdot  : T_n(q) \o A \longrightarrow A$ such that $g \cdot 1_A \in \{0, 1_A\}$ is symmetric.
\end{thm}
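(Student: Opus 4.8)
The plan is to split along the dichotomy of Theorem \ref{taft_principal}. When $g \cdot 1_A = 1_A$, that theorem gives that $\cdot$ is a global action, and every $H$-module algebra is a symmetric partial $H$-module algebra (as recalled in Section \ref{sec:preliminaries}); hence (PA.S) holds trivially. The whole content is therefore the case $g \cdot 1_A = 0$, where $\cdot$ is given by the explicit formula \eqref{formula} with $w = x \cdot 1_A$. Since (PA.3) is already known, proving (PA.S) for $h = g^ix^j$ and $k = g^tx^s$ amounts to verifying that the right-hand side of (PA.S) coincides with $h\cdot(k\cdot a)$, i.e.
\[
((g^ix^j)_1\, k \cdot a)((g^ix^j)_2 \cdot 1_A) = g^ix^j \cdot (k \cdot a).
\]
The right-hand side here is exactly the quantity computed in the proof of Proposition \ref{prop_volta} (with the first tensor slot set to $1_A$), so only the left-hand side, which carries the idempotent factor $(g^ix^j)_2 \cdot 1_A$ on the \emph{right}, needs a fresh expansion.

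A preliminary step is to upgrade Lemma \ref{lema_ind} and Corollary \ref{cor_condw} to their symmetric (``moreover'') forms, which requires that $\cdot|_{\k C_n \otimes A}$ be symmetric; concretely this amounts to the centrality of the idempotents $g^i \cdot 1_A$. I would derive this from $g \cdot 1_A = 0$ together with Lemma \ref{idempotente_comuta}(i),(iii) and the compatibility $g^i \cdot (x \cdot 1_A) = q^{-i}(g^i \cdot 1_A)(x \cdot 1_A)$ of Corollary \ref{corolario_ida}(ii), which pin down how the surviving idempotents interact with the relevant corners, yielding $(g^i \cdot 1_A)a = a(g^i \cdot 1_A)$ as in Lemma \ref{idempotente_comuta}(ii). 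Once group symmetry is in hand, the symmetric identities allow me to transport every factor $(g^i \cdot 1_A)$ to the right of the words $w^{p}(g^{m} \cdot a)w^{r}$, which is precisely the placement demanded by the target equation.

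With these tools I expand the left-hand side using \eqref{formula} for both $(g^ix^j)_2 \cdot 1_A = g^ix^\ell \cdot 1_A$ and $(g^{i+\ell}x^{j-\ell})k \cdot a$, together with $\Delta(g^ix^j) = \sum_\ell {j \choose \ell}_q\, g^{i+\ell}x^{j-\ell} \otimes g^ix^\ell$ and identity \eqref{id_q} to pass the $q$-binomials to base $q^{-1}$. Fixing the vector $w^{j-k_0}(g^{i+k_0} \cdot a)w^{s-\ell_0}(g^{i+t+k_0+\ell_0} \cdot b)w^{k_0+\ell_0}$ exactly as in Proposition \ref{prop_volta} and collecting its scalar, the required equality reduces, after applying \eqref{Lema2_2} and \eqref{igualdade_q_n}, to the symmetric $q$-identities of Lemma \ref{Lema_R_L_simetria} together with \eqref{igualdade_estrela_simetria} and \eqref{identidade_lema_simetria_n}, mirroring the way the non-symmetric computation reduced to \eqref{lema_acao_da_acao} and \eqref{identidade_R_L}. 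As before the argument is organized into the two cases $j + s < n$ and $j + s \geq n$, the central element $w^n$ from Theorem \ref{taft_principal}(ii) being used to identify vectors across the two sides.

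The main obstacle is the case $j + s \geq n$. There the reduction modulo $n$ of the exponent in $g^{i+t}x^{\ell+s} \cdot b$ (via \eqref{radford}) forces a single admissible choice of summation indices and produces the genuinely new $q$-identity, whose proof is the content of Lemma \ref{Lema_R_L_simetria}(iii) (identity \eqref{identidade_R_L_item}). Keeping the bookkeeping of signs, of the Gaussian powers $q^{\pm k(k\pm 1)/2}$, and of the correct placement of the $w$-powers consistent between the two sides is where all the care lies; the algebraic skeleton is otherwise identical to, and the mirror image of, the proof of Proposition \ref{prop_volta}.
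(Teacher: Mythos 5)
Your computational skeleton is exactly the paper's: the paper also verifies the symmetric condition by expanding both sides with the explicit formula, fixing the words $w^{j+s-(k_0+\ell_0)}(g^{i+t+k_0+\ell_0}\cdot a)\,w^{\ell_0}(g^{i+k_0}\cdot b)\,w^{k_0}$, matching scalars, splitting into the cases $j+s<n$ and $j+s\geq n$, and landing on identity \eqref{identidade_R_L_item} of Lemma \ref{Lema_R_L_simetria}(iii), with $w^n\in Z(A)$ used to identify vectors across the two sides. Your diagnosis that everything funnels through the symmetric (``moreover'') halves of Lemma \ref{lema_ind} and Corollary \ref{cor_condw} is also correct. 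The genuine gap is your preliminary step: centrality of the idempotents $g^i\cdot 1_A$ \emph{cannot} be derived from $g\cdot 1_A=0$ together with Lemma \ref{idempotente_comuta}(i),(iii) and Corollary \ref{corolario_ida}(ii). Lemma \ref{idempotente_comuta}(ii) \emph{consumes} symmetry of $\cdot|_{\k C_n\o A}$ as a hypothesis, it does not produce it, and the available identities are strictly one-sided: $(g^i\cdot 1_A)a(g^i\cdot 1_A)=(g^i\cdot 1_A)a$ and $g^i\cdot(x\cdot 1_A)=q^{-i}(g^i\cdot 1_A)(x\cdot 1_A)$ say nothing about $a(g^i\cdot 1_A)$. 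In fact the implication you want is false. Take $n=4$, $A$ the algebra of upper triangular $2\times 2$ matrices with matrix units $E_{rs}$, and $e=E_{22}$, so that $eae=ea$ for all $a\in A$ while $e\notin Z(A)$. Then $1\cdot a=a$, $g\cdot a=g^3\cdot a=0$, $g^2\cdot a=eae$ is a partial action of $\k C_4$ with $g\cdot 1_A=0$ (the only nontrivial instances of (PA.2) and (PA.3) are $g^2\cdot(ab)=(g^2\cdot a)(g^2\cdot b)$ and $g^2\cdot(g^2\cdot a)=(g^2\cdot 1_A)a$, both immediate from $eae=ea$), and with $w=x\cdot 1_A=0$ it satisfies every hypothesis of Proposition \ref{prop_volta}, hence $g^ix^j\cdot a=\delta_{j,0}\,g^i\cdot a$ is a partial action of $T_4(q)$ with $g\cdot 1_A=0$. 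But (PA.S) fails: $g^2\cdot(g^2\cdot E_{12})=eE_{12}e=0$, whereas $(g^2g^2\cdot E_{12})(g^2\cdot 1_A)=E_{12}e=E_{12}$.

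You should know that the paper's own proof passes over the same point: in the equality marked $\stackrel{\ref{lema_ind}}{=}$ it tacitly places $(g^{i+k}\cdot 1_A)$ on the \emph{right} of the word and absorbs it into $(g^{i+k}\cdot b)$, which is the symmetric half of Lemma \ref{lema_ind}/Corollary \ref{cor_condw} --- i.e.\ it presupposes exactly the group-part symmetry being proved. With only the unconditional halves, a stray factor $(g^{i+k}\cdot 1_A)$ survives on the left; already at $j=s=0$ the identity to be checked degenerates to $(g^i\cdot 1_A)(g^{i+t}\cdot a)(g^i\cdot b)=(g^{i+t}\cdot a)(g^i\cdot b)$, which the example above violates. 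So the statement needs the extra hypothesis that $\cdot|_{\k C_n\o A}$ is symmetric --- equivalently, by Lemma \ref{idempotente_comuta}, that the idempotents $g^i\cdot 1_A$ are central, which is automatic when $A$ has only trivial idempotents (the setting of Corollary \ref{idempotente_trivial}) or is commutative. Under that hypothesis the rest of your plan, which mirrors the paper's proof step for step, does go through; but the bridge you propose in your second paragraph cannot be built, and no rearrangement of the cited lemmas will build it.
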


\begin{proof}
	If $g \cdot 1_A = 1_A$, the partial action is, in fact, a global action, and so the result is clear.
	
	Now, assume $g \cdot 1_A = 0$. 
	It is sufficient to check the symmetric condition (PA.S):
	$g^ix^j \cdot ((g^tx^s \cdot a)b) = ((g^ix^j)_1 g^tx^s \cdot a)((g^ix^j)_2 \cdot b).$
	
	As in the proof of Proposition \ref{prop_volta}, we proceed by fixing vectors and checking the scalars. On one hand,	
	\begin{align*}
		g^i x^j & \cdot ((g^tx^s \cdot a)b) = q^{-ij} \sum_{k=0}^{j} q^{\frac{-k(k-1)}{2}} (-1)^k {j \choose k}_{q^{-1}} \!\!\!\! w^{j-k} \left( g^{i+k} \cdot (g^tx^s \cdot a)b \right) w^k\\
		\stackrel{\ref{lema_ind}}{=} \ & q^{-ij-s(t+i)} \sum_{k=0}^{j} \sum_{\ell=0}^{s} q^{-sk -\frac{k(k-1)}{2} - \frac{\ell(\ell-1)}{2}} (-1)^{k+\ell} {j \choose k}_{q^{-1}} {s \choose \ell}_{q^{-1}}\\
		\times \ & w^{j+s-(k+\ell)}(g^{i+t+k+\ell} \cdot a) w^{\ell}(g^{i+k}\cdot b)w^k.
	\end{align*}
	
		Considering $0 \leq k_0 \leq j$ and $0 \leq \ell_0 \leq s$ fixed, we obtain the vector
	\begin{eqnarray}\label{eqsimetria_vetor}
		w^{j+s-(k_0+\ell_0)}(g^{i+t+k_0+\ell_0} \cdot a) w^{\ell_0}(g^{i+k_0}\cdot b)w^{k_0}
	\end{eqnarray}
	with scalar	
	\begin{eqnarray}\label{eqsimetria}
		q^{-ij-s(t+i)} q^{-sk_0 -\frac{k_0(k_0-1)}{2} - \frac{\ell_0(\ell_0-1)}{2}} (-1)^{k_0+\ell_0} {j \choose k_0}_{q^{-1}} {s \choose \ell_0}_{q^{-1}}.
	\end{eqnarray}
	
	On the other hand, 
	\begin{align*}
		& ((g^ix^j)_1 g^t x^s \cdot a)((g^ix^j)_2  \cdot b) = \sum_{\ell=0}^{j} {j \choose \ell}_{q} (g^{i+\ell}x^{j-\ell}g^tx^s \cdot a)(g^ix^{\ell} \cdot b)\\
		= \ & \sum_{\ell=0}^{j} q^{\ell(j-\ell) + t(j-\ell)}{j \choose \ell}_{q^{-1}} (g^{i+t+\ell}x^{j+s-\ell} \cdot a)(g^ix^{\ell} \cdot b).
	\end{align*}
	Due to the terms $g^{i+t+\ell}x^{j+s-\ell} \cdot a$, we have to proceed by cases.
	
	\underline{Case 1: $j+s < n$.}
	\begin{align*}
		& \sum_{\ell=0}^{j} q^{\ell(j-\ell) + t(j-\ell)}{j \choose \ell}_{q^{-1}} (g^{i+t+\ell}x^{j+s-\ell} \cdot a)(g^ix^{\ell} \cdot b)\\
		= \ & q^{-ij-s(i+t)}\sum_{\ell=0}^{j} \sum_{r=0}^{j+s-\ell}\sum_{\theta=0}^{\ell} q^{-\ell s -\frac{r(r-1)}{2} - \frac{\theta(\theta-1)}{2}} (-1)^{r + \theta} {j \choose \ell}_{q^{-1}} \\
		\times \ &  {j+s-\ell \choose r}_{q^{-1}} {\ell \choose \theta}_{q^{-1}} w^{j+s-(\ell+r)} (g^{i+t+\ell+r} \cdot a) w^{r+\ell-\theta} (g^{i+\theta} \cdot b) w^{\theta}\\
	\stackrel{k = \ell + r}{=} & q^{-ij-s(i+t)}\sum_{\ell=0}^{j} \sum_{k=\ell}^{j+s}\sum_{\theta=0}^{\ell} q^{-\ell s - \frac{(k-\ell)(k-\ell-1)}{2} - \frac{\theta(\theta-1)}{2}} (-1)^{k+\theta -\ell}\\
		\times \ & {j \choose \ell}_{q^{-1}} {j+s-\ell \choose k-\ell}_{q^{-1}} {\ell \choose \theta}_{q^{-1}} w^{j+s-k} (g^{i+t+k} \cdot a) w^{k-\theta} (g^{i+\theta} \cdot b) w^{\theta}.
	\end{align*}	
	For $\theta = k_0$ and $k=k_0+\ell_0$, we obtain the following scalar
	\begin{align*}
		& \  q^{-ij-s(i+t)}\sum_{\ell=0}^{j} q^{-\ell s - \ell_0k_0+k_0\ell+\ell_0\ell  -k_0(k_0-1) - \frac{\ell_0(\ell_0-1)}{2} - \frac{\ell(\ell+1)}{2} } (-1)^{\ell_0 -\ell}\\
		& \times {j \choose \ell}_{q^{-1}} {j+s-\ell \choose k_0+\ell_0-\ell}_{q^{-1}} {\ell \choose k_0}_{q^{-1}}. 
	\end{align*}
	Therefore, this case holds by \eqref{eqsimetria} and identity \eqref{Lema2_10}.
	
		\underline{Case 2: $j+s \geq n$.}
	\begin{align*}
		& \sum_{\ell=0}^{j} q^{\ell(j-\ell) + t(j-\ell)}{j \choose \ell}_{q^{-1}} (g^{i+t+\ell}x^{j+s-\ell} \cdot a)(g^ix^{\ell} \cdot b)\\
		= \	& q^{-ij - s(i+t)} \sum_{\ell=j+s-n+1}^{j} \sum_{k=\ell}^{j+s} \sum_{\theta=0}^{\ell} q^{-\ell s - \frac{(k-\ell)(k-\ell-1)}{2} - \frac{\theta(\theta-1)}{2}} (-1)^{k - \ell + \theta}\\
		\times \ & {j \choose \ell}_{q^{-1}} {j+s-\ell \choose k-\ell}_{q^{-1}} {\ell \choose \theta}_{q^{-1}} w^{j+s-k} (g^{i+t+k} \cdot a ) w^{k-\theta}(g^{i+\theta} \cdot b)w^{\theta}.
	\end{align*}
	
	We will seek the vector highlighted in \eqref{eqsimetria_vetor} in the above expression and check its scalar.
	If $j+s-(\ell_0 + k_0) < n$, one proceeds as in Case 1.
	Thus, assume $j+s-(\ell_0 + k_0) \geq n$.
	For $\theta = k_0$ fixed,  $\ell \geq k_0$ and so $j+s-n+1 \geq k_0$. 
	Moreover, since $j+s-k < n$, for each $\ell \in \I_{j+s-n+1,j}$ there is a unique $k = \ell_0 + k_0+n$ such that the vector $w^{j+s-(k_0+\ell_0)-n} (g^{i+t+k_0+\ell_0} \cdot a ) w^{\ell_0+n}(g^{i+k_0} \cdot b)w^{k_0}$ appears in the above expression.
	This vector is the same in \eqref{eqsimetria_vetor} due the commutative of $w^n$, but with scalar
	\begin{align*}
		& q^{-ij - s(i+t)} \sum_{\ell=j+s-n+1}^{j} q^{\ell s - \frac{(k + \ell_0+n-\ell)(k+\ell_0+n-\ell-1)}{2} - \frac{k_0(k_0-1)}{2}} (-1)^{\ell_0 - \ell + n} \\
		& \times {j \choose \ell}_{q^{-1}} {j+s-\ell \choose k_0+\ell_0+n-\ell}_{q^{-1}} {\ell \choose k_0}_{q^{-1}} 
	\end{align*}
	
	Now, we investigate this scalar. First, by identity \eqref{igualdade_q_n} it is equal to
	\begin{align*}
		& \ q^{-ij - s(i+t)} \sum_{\ell=j+s-n+1}^{j}  q^{-\ell s - \ell_0k_0 + \ell(k_0+\ell_0) - k_0(k_0-1) - \frac{\ell_0(\ell_0-1)}{2} - \frac{\ell(\ell-1)}{2}} \\
		& \times (-1)^{\ell_0 - \ell+1} {j \choose \ell}_{q^{-1}} {j+s-\ell \choose k_0+\ell_0+n-\ell}_{q^{-1}} {\ell \choose k_0}_{q^{-1}},
	\end{align*}
	and, comparing it with the scalar in \eqref{eqsimetria}, we only have to show that:
	\begin{align*}
		& \sum_{\ell=j+s-n+1}^{j}  q^{-\ell s +sk_0 - \ell_0k_0 + \ell(k_0+\ell_0) -\frac{k_0(k_0-1)}{2} - \frac{\ell(\ell-1)}{2}} (-1)^{k_0- \ell+1}\\
		& \times {j \choose \ell}_{q^{-1}} {j+s-\ell \choose k_0+\ell_0+n-\ell}_{q^{-1}} {\ell \choose k_0}_{q^{-1}} = {j \choose k_0}_{q^{-1}} {s \choose \ell_0}_{q^{-1}}.
	\end{align*}
	By identity \eqref{Lema2_10}, the expression above is equivalent to:
	\begin{align*}
		& \sum_{\ell=0}^{j+s-n}  q^{-\ell s +sk_0 - \ell_0k_0 + \ell(k_0+\ell_0) -\frac{k_0(k_0-1)}{2} - \frac{\ell(\ell-1)}{2}} (-1)^{k_0- \ell}\\
		& \times  {j \choose \ell}_{q^{-1}} {j+s-\ell \choose k_0+\ell_0+n-\ell}_{q^{-1}} {\ell \choose k_0}_{q^{-1}} = {j \choose k_0}_{q^{-1}} {s \choose \ell_0}_{q^{-1}}.
	\end{align*}
	
	Note that, on the left side of the above equality, due to the term ${ \ell \choose k_0}_{q^{-1}}$, the lower index of the summation symbol can begin in $k_0$ instead $0$. Then, we replace $\ell$ by $L+k_0$ and use \eqref{Lema2_2}, to get the equivalent expression 
	\begin{align*}
		&  \sum_{L=0}^{j+s-n-k_0}  q^{L(\ell_0-s) -\frac{L(L-1)}{2}} (-1)^{L} {j \choose k_0}_{q^{-1}} {j-k_0 \choose L}_{q^{-1}} {j+s-L-k_0 \choose \ell_0+n-L}_{q^{-1}} \\
		= \ & {j \choose k_0}_{q^{-1}} {s \choose \ell_0}_{q^{-1}}.
	\end{align*}
	Finally, with a change of variable on the upper index of the summation, the above expression is true by identity \eqref{identidade_R_L_item}.
\end{proof}

\subsection{Partial Coactions}

\medbreak 

In this section, we investigate the theory of the partial coactions of Taft algebras on algebras.

First, recall that $T_n(q)$ is a self-dual Hopf algebra. 

\begin{lem}\label{inversa}
	Consider the Taft algebra $T_n(q)$ and the linear maps:
	\begin{itemize}
		\item[(1)] $\psi : T_n(q) \longrightarrow (T_n(q))^*$ given by
		$\psi(g^ix^j)=G^iX^j,$ where 
		\begin{align*}
		G^iX^j  = \sum_{k=0}^{n-1} (j)_q! q^{-i(k+j)-jk-\frac{j(j-1)}{2}} (g^kx^j)^*;
		\end{align*}
		
		\item[(2)] $\varphi : (T_n(q))^* \longrightarrow T_n(q)$ given by
		\begin{align*}
		\varphi ((g^ix^j)^*)=\frac{1}{n}((j)_q!)^{-1} q^{ij+\frac{j(j-1)}{2}} \sum_{k=0}^{n-1} q^{k(i+j)} g^kx^j. 
		\end{align*}
	\end{itemize}
	Then, both maps defined in (1) and (2) are isomorphisms of Hopf algebras.
	Furthermore, $\varphi = \psi^{-1}$.
\end{lem}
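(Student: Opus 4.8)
The plan is to verify directly that $\psi$ and $\varphi$ are mutually inverse algebra-and-coalgebra maps, since a bijective linear map between Hopf algebras that preserves both the multiplicative and comultiplicative structure (together with unit, counit, and hence automatically the antipode) is an isomorphism of Hopf algebras. The most efficient route is to establish $\varphi = \psi^{-1}$ first, as this reduces the problem to checking that \emph{one} of the two maps is a bialgebra morphism; the other then inherits the structure-preserving property from being its two-sided inverse. So I would begin by computing the composites $\varphi \circ \psi$ and $\psi \circ \varphi$ on the respective canonical bases $\{g^ix^j\}$ and $\{(g^ix^j)^*\}$, showing both equal the identity.

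For the inverse computation, I would apply $\varphi$ to the expression for $\psi(g^ix^j) = G^iX^j = \sum_{k=0}^{n-1}(j)_q!\, q^{-i(k+j)-jk-\frac{j(j-1)}{2}}(g^kx^j)^*$, using linearity and the formula for $\varphi$ on each dual basis element $(g^kx^j)^*$. This yields a double sum over two indices running from $0$ to $n-1$; after collecting the scalar factors, the $(j)_q!$ and $((j)_q!)^{-1}$ cancel, as do the powers of $q$ involving $\tfrac{j(j-1)}{2}$, and one is left with an inner sum of the form $\tfrac{1}{n}\sum_{m=0}^{n-1} q^{m(\text{something})}$. The key observation is that since $q$ is a primitive $n^{th}$ root of unity, $\tfrac{1}{n}\sum_{m=0}^{n-1}q^{m\ell} = \delta_{\ell \equiv 0 \!\!\pmod n}$; this orthogonality relation is exactly what forces the surviving term to be $g^ix^j$, giving $\varphi\circ\psi = \mathrm{id}$. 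The composite $\psi\circ\varphi = \mathrm{id}$ follows by the same root-of-unity orthogonality, or simply by noting that a one-sided inverse between spaces of equal finite dimension is automatically two-sided.

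It then remains to show that $\psi$ is a morphism of Hopf algebras. I would verify that $\psi$ respects the algebra structure (i.e. $\psi(g^ix^j \cdot g^sx^t) = \psi(g^ix^j)\,\psi(g^sx^t)$ with the convolution product on $(T_n(q))^*$ computed via the coproduct of $T_n(q)$) and dually that it respects the coalgebra structure, carrying the comultiplication of $T_n(q)$ to the convolution-dual comultiplication on $(T_n(q))^*$ induced by the product of $T_n(q)$. Because $T_n(q)$ is generated as an algebra by $g$ and $x$, and as a coalgebra is controlled by the group-like and skew-primitive structure, it suffices to check these compatibilities on the generators $g$ and $x$ and to invoke the relations $g^n=1$, $x^n=0$, $xg=qgx$ together with the $q$-binomial form of $\Delta(g^ix^j)$ recorded before the statement; the $q$-identities from Lemma \ref{prodqbinom} handle the recombination of binomial coefficients. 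Finally, preservation of unit and counit is immediate from the formulas, and preservation of the antipode is automatic for a bialgebra isomorphism between Hopf algebras.

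The main obstacle I anticipate is the bookkeeping of the $q$-power exponents in the inverse computation: the several quadratic-in-$j$ and bilinear-in-$(i,k)$ terms in the definitions of $G^iX^j$ and $\varphi$ must cancel precisely so that the residual exponent is linear in the summation index, which is what makes the orthogonality relation applicable. Isolating that linear dependence correctly — rather than the verification that $\psi$ is a bialgebra map, which is routine once the generator relations and the self-duality of $T_n(q)$ are in hand — is where the real care is needed.
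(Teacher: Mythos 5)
The paper does not actually prove this lemma: it is stated as a recollection of the well-known self-duality of $T_n(q)$ (``First, recall that $T_n(q)$ is a self-dual Hopf algebra''), so there is no argument in the text to compare against; what you propose is the standard direct verification that the paper leaves implicit, and it is correct in outline. The part you carry out explicitly is exactly right: in $\varphi(\psi(g^ix^j))$ the factors $(j)_q!$ and $((j)_q!)^{-1}$, the powers $q^{-jk}$ and $q^{kj}$, and the powers $q^{\mp\frac{j(j-1)}{2}}$ cancel, leaving $\frac{1}{n}\sum_{m=0}^{n-1} q^{(m-i)j}\bigl(\sum_{k=0}^{n-1} q^{(m-i)k}\bigr)\, g^m x^j$, and since $q$ is a primitive $n^{th}$ root of unity and $char(\k)\nmid n$, the inner sum is $n$ if $n \mid (m-i)$ and $0$ otherwise, so only $m=i$ survives and $\varphi\circ\psi=\mathrm{id}$; your appeal to equal finite dimension (or the symmetric computation) for the other composite is fine, and so is inheriting the Hopf structure for $\varphi$ from $\psi$ and the automatic preservation of the antipode. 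The one step you should not dismiss as routine is the reduction to generators: it is legitimate only after you establish (a) that $G:=\psi(g)$ and $X:=\psi(x)$ satisfy the Taft relations in the convolution algebra, namely $G^{*n}=\varepsilon$, $X^{*n}=0$ (each convolution factor of $X$ consumes one power of $x$, and $x$-degrees in $T_n(q)$ are at most $n-1$), and $X*G = q\,G*X$, together with $G$ being group-like in $(T_n(q))^*$ (an algebra map to $\k$) and $X$ being $(\varepsilon,G)$-primitive; and (b) that the displayed closed formula is genuinely the convolution product $G^{*i}*X^{*j}$ --- this requires an induction on $j$ using the $q$-binomial form of $\Delta(g^ix^j)$, and it is precisely there that the coefficient $(j)_q!$ arises. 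With (a) and (b) in hand, multiplicativity of the coproducts propagates the coalgebra compatibility from the generators to all of $T_n(q)$, so your plan closes without any gap; it simply contains one induction that your sketch compresses into the phrase ``the $q$-identities handle the recombination.''
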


For the next result, consider $\pi$ the canonic projection from $T_n(q)$ on $\Bbbk C_n$, \emph{i.e.}, $\pi : T_n(q) \longrightarrow \Bbbk C_n$ given by $\pi(g^ix^j)=\delta_{j,0}g^i$, for all $i,j \in \I_{0,n-1}$.
\begin{thm}\label{coacao_taft}
	Let $\rho:A \longrightarrow A \o T_n(q)$ be a linear map, denoted by $\rho(a)=a^0\otimes a^1$, such that $A_1\in\{0,1\}$, where $A_i=\sum_{r=0}^{n-1}q^{-ir}(g^r)^{\ast}(1^1)1^0$, $0 \leq i <n$. Then, $\rho$  is a partial coaction if and only if $\rho$ is a global coaction, or $(I\otimes\pi)\rho$ is a partial coaction of  $\Bbbk C_n$ on $A$ such that $A_1=0$ and
	\begin{itemize}
		\item[(i)] $\displaystyle \rho(a)= \sum\limits_{i,j = 0}^{n-1}\sum_{k=0}^j q^{-ij} (-1)^k q^{- \frac{k(k-1)}{2}} { j \choose k }_{q^{-1}} w^{j-k}\\\times\left(\sum_{t=0}^{n-1}q^{-(i+k)t}(g^t)^{\ast}(a^1)a^0\right) w^k 
		\otimes\frac{1}{n}((j)_q!)^{-1} q^{ij+\frac{j(j-1)}{2}} \sum_{\ell=0}^{n-1} q^{\ell(i+j)} g^\ell x^j$;
		\item[(ii)] $w^n \in Z(A);$
		\item[(iii)] $\sum_{s=0}^{n-1}q^{-is}(g^s)^{\ast}(w^1)w^0=q^{-i}A_iw,$
		\end{itemize}
	for all  $a \in A$.
\end{thm}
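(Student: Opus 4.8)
The plan is to exploit the self-duality of $T_n(q)$ established in Lemma \ref{inversa} together with the dualization machinery of Proposition \ref{dual}, so that the characterization of partial coactions is obtained by transporting Theorem \ref{taft_principal} through the isomorphism $\varphi = \psi^{-1}$. Concretely, $T_n(q)$ being self-dual means that a partial coaction $\rho : A \to A \o T_n(q)$ corresponds, via $\varphi$, to a partial coaction $A \to A \o (T_n(q))^*$, which by Proposition \ref{dual} is the same datum as a partial action of $T_n(q)$ on $A$. First I would make this dictionary explicit: given $\rho(a) = a^0 \o a^1$, the associated action is $h \cdot a = \langle h, a^1\rangle a^0$ where the pairing is the one realizing $\psi$, and I would check that the quantity $A_i = \sum_{r=0}^{n-1} q^{-ir}(g^r)^*(1^1)1^0$ is precisely $g^i \cdot 1_A$ under this identification. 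This matches the hypothesis $A_1 \in \{0,1\}$ with the hypothesis $g \cdot 1_A \in \{0,1\}$ of Theorem \ref{taft_principal}, and the condition $A_1 = 0$ with $g \cdot 1_A = 0$.

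With the dictionary in hand, the equivalence becomes a translation exercise. I would argue that $\rho$ is a partial coaction if and only if the associated $h \cdot a$ is a partial action (this is exactly Proposition \ref{dual}, using that $T_n(q)$ is finite-dimensional and the pairing is non-degenerate). Then I would apply Theorem \ref{taft_principal} to split into the global case (giving the global coaction) and the partial case $g \cdot 1_A = 0$. In the latter, conditions (i)--(iii) of the theorem translate term by term: condition (ii) $(x\cdot 1_A)^n \in Z(A)$ becomes $w^n \in Z(A)$ with $w = x \cdot 1_A$; condition (iii) $g^i \cdot (x \cdot 1_A) = q^{-i}(g^i \cdot 1_A)(x \cdot 1_A)$ becomes the stated equation (iii) once one writes $g^i \cdot w = \sum_s q^{-is}(g^s)^*(w^1)w^0$ and $g^i \cdot 1_A = A_i$; and the explicit formula (i) of Theorem \ref{taft_principal} for $g^ix^j \cdot a$, when fed into $\rho(a) = \sum_{i,j}(g^ix^j \cdot a)\o \varphi((g^ix^j)^*)$ using the dual-basis expansion of $\rho$, produces the displayed formula (i). The statement that $(I \o \pi)\rho$ is a partial coaction of $\k C_n$ corresponds, under $\pi$ being dual to the inclusion $\k C_n \hookrightarrow T_n(q)$, to the restriction $\cdot|_{\k C_n \o A}$ being a partial group action.

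The main technical obstacle I anticipate is keeping the scalar bookkeeping consistent through the pairing, since $\psi$ and $\varphi$ carry nontrivial $q$-factorial and power-of-$q$ normalizations. In particular, verifying that the coefficients $(j)_q!$, $q^{ij + \frac{j(j-1)}{2}}$ and $q^{\ell(i+j)}$ appearing in formula (i) are exactly the ones produced by expanding $\varphi((g^ix^j)^*)$ and re-indexing the sums over $k$ and $t$ requires care; one must check that the inner sum $\sum_t q^{-(i+k)t}(g^t)^*(a^1)a^0$ genuinely reproduces $g^{i+k} \cdot a$ under the pairing, which hinges on the precise form of $G^iX^j$ in Lemma \ref{inversa}. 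I would therefore organize the proof so that these identifications are stated once as lemmas-in-passing (namely $A_i = g^i \cdot 1_A$ and $\sum_t q^{-(i+k)t}(g^t)^*(a^1)a^0 = g^{i+k}\cdot a$), and then invoke Theorem \ref{taft_principal} and Proposition \ref{dual} to conclude, rather than re-deriving the $q$-combinatorial identities, which are already absorbed into Theorem \ref{taft_principal}. I expect no genuinely new computation beyond this transport of structure; the content is entirely in making the self-dual pairing explicit and confirming that each hypothesis and each formula corresponds correctly under it.
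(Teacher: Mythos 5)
Your proposal is correct and follows essentially the same route as the paper: the paper's proof likewise invokes Proposition \ref{dual} and Lemma \ref{inversa} to identify partial $T_n(q)$-coactions with partial $T_n(q)$-actions, then translates the conditions of Theorem \ref{taft_principal} term by term, using $A_i = g^i \cdot 1_A$, $w = x \cdot 1_A$, and $\rho(a) = \sum_{i,j} (g^i x^j \cdot a) \otimes \varphi((g^i x^j)^*)$ exactly as you describe. The dictionary identities you propose to isolate (e.g.\ $\sum_t q^{-(i+k)t}(g^t)^*(a^1)a^0 = g^{i+k}\cdot a$) are precisely the identifications the paper makes in passing, so no new computation is needed.
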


\begin{proof}
	By Proposition \ref{dual} and Lemma \ref{inversa}, $A$ is a partial $T_n(q)$-comodule algebra  via $\cdot$
	if and only if, $A$ is a partial $T_n(q)$-module algebra via $\rho$. 
	
	Notice that the maps $\cdot$ and $\rho$, when considered just as linear, are dual.
	
	Therefore, considering $A_i=\sum_{r=0}^{n-1}q^{-ir}(g^r)^{\ast}(1^1)1^0$,
	\begin{eqnarray*}
		A_1=0&\Leftrightarrow & g \cdot 1_A=0  \ \ \mbox{and} \ \ 
		A_1=1\Leftrightarrow  g \cdot 1_A=1.
	\end{eqnarray*}
	
	By Proposition \ref{dual} and Theorem \ref{taft_principal}, 
	\begin{eqnarray*}
		\rho(a)&=& \sum\limits_{i,j} (g^ix^j)\cdot a \otimes \varphi((g^ix^j)^{\ast})\\
		&=&\sum\limits_{i,j}\sum_{k=0}^j q^{-ij} (-1)^k q^{- \frac{k(k-1)}{2}} { j \choose k }_{q^{-1}} (x \cdot 1_A)^{j-k}(g^{i+k} \cdot a)(x \cdot 1_A)^k\\
		&\otimes &  \frac{1}{n}((j)_q!)^{-1} q^{ij+\frac{j(j-1)}{2}} \sum_{\ell=0}^{n-1} q^{\ell(i+j)} g^\ell x^j, 
	\end{eqnarray*}
	for all $a\in A$ and $\varphi$ given by Lemma \ref{inversa}. 
	
	Moreover, $\cdot|_{\k C_n \otimes A}$
	is a partial group action such that $g \cdot 1_A = 0$ if and only if $(I\otimes\pi)\rho$ is a partial coaction of  $\Bbbk C_n$ on $A$ such that $A_1=0$.
	
	Finally, considering  $w=x\cdot 1_A$,
\begin{eqnarray*}
			\sum_{s=0}^{n-1}q^{-is}(g^s)^{\ast}(w^1)w^0=q^{-i}A_iw \Leftrightarrow  g^i \cdot (x \cdot 1_A) = q^{-i}(g^i \cdot 1_A)(x \cdot 1_A).
	\end{eqnarray*}
\end{proof}

\noindent{\textit{Remark.}} Since the partial actions given by Theorem \ref{simetria_taft} are symmetric, it follows by Proposition \ref{dual} that the partial coactions  of the above theorem are symmetric.

\begin{cor}\label{idempotente_trivial_coac}
	Suppose that the algebra $A$ has only the trivial idempotents. Then, Theorem \ref{coacao_taft} characterizes all partial coactions of $T_n(q)$ on A.
\end{cor}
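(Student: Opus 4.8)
The plan is to show that, when $A$ has only trivial idempotents, the single standing hypothesis of Theorem \ref{coacao_taft}, namely $A_1 \in \{0,1\}$, is satisfied automatically by \emph{every} partial coaction; once this is granted, Theorem \ref{coacao_taft} applies verbatim and the claim follows. This mirrors the passage from Theorem \ref{taft_principal} to Corollary \ref{idempotente_trivial} on the action side, and I would carry it across the duality exactly as the proof of Theorem \ref{coacao_taft} itself does.

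First I would take an arbitrary partial coaction $\rho(a)=a^0\otimes a^1$ of $T_n(q)$ on $A$. By Proposition \ref{dual}, the rule $h^*\cdot a = h^*(a^1)a^0$ makes $A$ a partial $(T_n(q))^*$-module algebra, and precomposing with the Hopf isomorphism $\psi:T_n(q)\to (T_n(q))^*$ of Lemma \ref{inversa} yields a genuine partial action $\cdot$ of $T_n(q)$ on $A$ via $h\cdot a = \psi(h)(a^1)a^0$. The key step is the dictionary $A_i = g^i\cdot 1_A$: from Lemma \ref{inversa} one has $\psi(g^i)=G^iX^0=\sum_{k=0}^{n-1}q^{-ik}(g^k)^*$, whence $g^i\cdot 1_A = \sum_{k=0}^{n-1}q^{-ik}(g^k)^*(1^1)1^0 = A_i$, and in particular $A_1 = g\cdot 1_A$.

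Next I would record that $g\cdot 1_A$ is an idempotent of $A$: since $g$ is group-like, axiom (PA.2) applied to $1_A=1_A^2$ gives $g\cdot 1_A = (g\cdot 1_A)(g\cdot 1_A)$. As $A$ has only trivial idempotents, it follows that $A_1 = g\cdot 1_A \in \{0,1_A\}=\{0,1\}$. Thus the hypothesis of Theorem \ref{coacao_taft} holds for $\rho$ without any extra assumption, so that theorem describes $\rho$; since $\rho$ was arbitrary, Theorem \ref{coacao_taft} characterizes all partial coactions of $T_n(q)$ on $A$. The argument is essentially formal once the identification $A_i = g^i\cdot 1_A$ is established, so I do not expect a real obstacle; the only point demanding care is the bookkeeping of the self-duality, i.e.\ confirming that $h\cdot a=\psi(h)(a^1)a^0$ is a partial action (guaranteed by Lemma \ref{inversa} and Proposition \ref{dual}) and that $\psi(g)$ has the stated expansion, which is what makes the reduction to the idempotent $g\cdot 1_A$ legitimate.
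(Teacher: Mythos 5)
Your proposal is correct and follows exactly the route the paper intends: the duality dictionary $A_i = g^i\cdot 1_A$ (already established in the proof of Theorem \ref{coacao_taft} via Proposition \ref{dual} and Lemma \ref{inversa}) together with the observation that $g\cdot 1_A$ is an idempotent by (PA.2), which is also the implicit argument behind the action-side Corollary \ref{idempotente_trivial}. Nothing is missing; the verification that $\psi(g^i)=\sum_{k=0}^{n-1}q^{-ik}(g^k)^*$ is exactly the bookkeeping required.
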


\begin{cor}\label{formula_coacao_corpo}
	Let $\rho:A \longrightarrow A \o T_n(q)$ be a linear map, denoted by $\rho(a)=a^0\otimes a^1$, and suppose $A_0=1$ and $A_i =0$, for all $1 \leq i <n$, where $A_i=\sum_{r=0}^{n-1}q^{-ir}(g^r)^{\ast}(1^1)1^0$. Then, $\rho$  is a partial coaction  of $T_n(q)$ on $A$ if and only if $w^n \in Z(A)$ and 
	\begin{equation*}
	\displaystyle \rho(a)= \sum_{i,j =0}^{n-1} (-1)^i q^{ \frac{i(i+1)}{2}} { j \choose i }_{q} w^{j-i}a w^i  			\otimes \frac{1}{n}((j)_q!)^{-1} q^{-ij+\frac{j(j-1)}{2}} \sum_{k=0}^{n-1} q^{k(j-i)} g^kx^j,
	\end{equation*}
	for all $a \in A$.
\end{cor}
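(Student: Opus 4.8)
The plan is to derive this corollary from the partial-action characterization by dualization, exactly as Theorem \ref{coacao_taft} is obtained from Theorem \ref{taft_principal}. By Proposition \ref{dual} together with the self-duality isomorphisms $\psi$ and $\varphi=\psi^{-1}$ of Lemma \ref{inversa}, the linear map $\rho$ is a partial coaction of $T_n(q)$ on $A$ if and only if the dual linear map $\cdot:T_n(q)\otimes A\longrightarrow A$, defined by $h\cdot a=\psi(h)(a^1)a^0$, is a partial action of $T_n(q)$ on $A$; and under this correspondence one has $\rho(a)=\sum_{i,j}(g^ix^j\cdot a)\otimes\varphi((g^ix^j)^\ast)$. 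Since $\rho$ and $\cdot$ are dual already as linear maps, this is a genuine equivalence, so it suffices to characterize the partial action $\cdot$ and then translate.

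First I would translate the hypotheses. From the explicit form of $\psi$ in Lemma \ref{inversa} one gets $\psi(g^i)=\sum_{k=0}^{n-1}q^{-ik}(g^k)^\ast$, whence $g^i\cdot 1_A=\psi(g^i)(1^1)1^0=A_i$. Thus $A_0=1$ and $A_i=0$ for all $1\le i<n$ are precisely the conditions $1\cdot 1_A=1_A$ and $g^i\cdot 1_A=0$ for all $1\le i<n$. These are the hypotheses of the corollary that yields formula \eqref{formula_corpo}, which I would now invoke: writing $w=x\cdot 1_A$, it states that $\cdot$ is a partial action if and only if $w^n\in Z(A)$ and $g^{n-i}x^j\cdot a=(-1)^iq^{\frac{i(i+1)}{2}}{j\choose i}_q w^{j-i}aw^i$ for all $0\le i,j<n$. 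Note that the compatibility (iii) of Theorem \ref{coacao_taft} is automatic here, since $g^i\cdot(x\cdot 1_A)=q^{-i}(g^i\cdot 1_A)(x\cdot 1_A)=0$ for $1\le i<n$; so only $w^n\in Z(A)$ and the displayed formula survive.

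It remains to carry the formula through the duality. Substituting \eqref{formula_corpo} into $\rho(a)=\sum_{i,j}(g^ix^j\cdot a)\otimes\varphi((g^ix^j)^\ast)$ and using the bijection $i\mapsto n-i$ on the powers of $g$ (recall $g^n=g^0$), I may rewrite $\rho(a)=\sum_{i,j=0}^{n-1}(g^{n-i}x^j\cdot a)\otimes\varphi((g^{n-i}x^j)^\ast)$, so that the $A$-component is exactly $(-1)^iq^{\frac{i(i+1)}{2}}{j\choose i}_q w^{j-i}aw^i$. For the $T_n(q)$-component I would plug $g^{n-i}x^j$ into the explicit expression for $\varphi$ from Lemma \ref{inversa} and simplify with $q^n=1$, using $q^{(n-i)j}=q^{-ij}$ and $q^{k(n-i+j)}=q^{k(j-i)}$, to obtain
\begin{align*}
\varphi((g^{n-i}x^j)^\ast)=\frac{1}{n}((j)_q!)^{-1}q^{-ij+\frac{j(j-1)}{2}}\sum_{k=0}^{n-1}q^{k(j-i)}g^kx^j,
\end{align*}
which is precisely the tensor factor in the statement. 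Matching both components yields the claimed formula for $\rho(a)$, and because every step is an equivalence both implications are established at once.

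Everything here is routine once the correspondence is in place; the only delicate point is the last paragraph, where the powers of $q$ must be tracked carefully through the substitution $i\mapsto n-i$. The same conclusion can alternatively be read off by specializing Theorem \ref{coacao_taft} to $A_i=0$ for all $1\le i<n$: the inner factor $\sum_{t}q^{-(i+k)t}(g^t)^\ast(a^1)a^0=g^{i+k}\cdot a$ in part (i) of that theorem vanishes unless $i+k\in\{0,n\}$, collapsing the double sum to the single-indexed formula above. Verifying this collapse, and that it agrees with the dualization of \eqref{formula_corpo}, is the principal bookkeeping task.
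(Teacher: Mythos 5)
Your proposal is correct and takes essentially the same approach the paper intends for this corollary (whose proof it leaves implicit): dualize via Proposition \ref{dual} and the isomorphisms of Lemma \ref{inversa}, translate $A_0=1$, $A_i=0$ into $1\cdot 1_A=1_A$, $g^i\cdot 1_A=0$, invoke the action corollary giving \eqref{formula_corpo}, and reindex $i\mapsto n-i$ using $q^n=1$ --- your exponent bookkeeping $q^{(n-i)j}=q^{-ij}$ and $q^{k(n-i+j)}=q^{k(j-i)}$ checks out against the stated tensor factor. Nothing further is needed.
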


\noindent{\textit{Remark.}} If $A= \k$, this partial coaction coincides with the coaction $z_{\alpha}$, $\alpha\in\Bbbk$, of \cite[Theorem 3.13]{taft_corpo_revista}.

\begin{cor}\label{w_comuta}
	Let $\rho:A \longrightarrow A \o T_n(q)$ be a linear map, denoted by $\rho(a)=a^0\otimes a^1$, such that $A_1\in\{0,1\}$, where $A_i=\sum_{r=0}^{n-1}q^{-ir}(g^r)^{\ast}(1^1)1^0$, $0 \leq i <n$, and suppose $w =\sum_{k=0}^{n-1} q^{-k}(g^kx)^{\ast}(1^1)1^0 \in Z(A)$.
	Then, $\rho$ is a partial coaction of $T_n(q)$ on $A$ if and only if  $\rho$ is a global coaction, or $(I\otimes\pi)\rho$ is a partial coaction of  $\Bbbk C_n$ on $A$ such that $A_1=0$ and
	\begin{itemize}
		\item[(i)] $\displaystyle \rho(a)= \sum\limits_{i,j = 0}^{n-1} w^j \sum_{k=0}^j (-1)^k q^{- \frac{k(k-1)}{2}} { j \choose k }_{q^{-1}} \left(\sum_{t=0}^{n-1}q^{-(i+k)t}(g^t)^{\ast}(a^1)a^0\right)\\
		\otimes \frac{1}{n}((j)_q!)^{-1} q^{\frac{j(j-1)}{2}} \sum_{\ell=0}^{n-1} q^{\ell(i+j)} g^\ell x^j$;
		\item[(ii)] $ \sum_{s=0}^{n-1}q^{-is}(g^s)^{\ast}(w^1)w^0=q^{-i}A_iw,$
	\end{itemize}		
	for all $a \in A$.
\end{cor}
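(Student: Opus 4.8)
The plan is to obtain this corollary as the coaction translation of its partial-action counterpart, Corollary \ref{x_ponto_1_comuta}, via the self-duality of $T_n(q)$, mirroring how Theorem \ref{coacao_taft} is derived from Theorem \ref{taft_principal}. First I would apply Proposition \ref{dual} together with the Hopf algebra isomorphism $\varphi=\psi^{-1}$ of Lemma \ref{inversa} to identify linear maps $\rho:A\to A\otimes T_n(q)$ with linear maps $\cdot:T_n(q)\otimes A\to A$, so that $A$ is a partial $T_n(q)$-comodule algebra via $\rho$ if and only if $A$ is a partial $T_n(q)$-module algebra via $\cdot$, the two being related by $\rho(a)=\sum_{i,j}(g^ix^j\cdot a)\otimes\varphi((g^ix^j)^{\ast})$.

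Next I would record the dictionary this duality produces, already used in the proof of Theorem \ref{coacao_taft}: the scalar $A_i=\sum_{r=0}^{n-1}q^{-ir}(g^r)^{\ast}(1^1)1^0$ equals $g^i\cdot 1_A$, and $w=\sum_{k=0}^{n-1}q^{-k}(g^kx)^{\ast}(1^1)1^0$ equals $x\cdot 1_A$. Under this correspondence the hypotheses transport verbatim: $A_1\in\{0,1\}$ becomes $g\cdot 1_A\in\{0,1_A\}$, and the standing assumption $w\in Z(A)$ becomes exactly $x\cdot 1_A\in Z(A)$, which is precisely the extra hypothesis of Corollary \ref{x_ponto_1_comuta}. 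Applying that corollary to the dual partial action then yields the dichotomy asserted here: either $\cdot$ is global, corresponding to $\rho$ being a global coaction, or $g\cdot 1_A=0$ with $\cdot|_{\k C_n\otimes A}$ a partial group action, which under $I\otimes\pi$ translates to $(I\otimes\pi)\rho$ being a partial coaction of $\k C_n$ on $A$ with $A_1=0$. Note that the condition $(x\cdot 1_A)^n\in Z(A)$ is absent on both sides, since $w\in Z(A)$ makes it automatic.

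It then remains to rewrite the two surviving conditions of Corollary \ref{x_ponto_1_comuta} on the coaction side. Condition (ii) there, $g^i\cdot(x\cdot 1_A)=q^{-i}(g^i\cdot 1_A)(x\cdot 1_A)$, becomes condition (ii) here, namely $\sum_{s=0}^{n-1}q^{-is}(g^s)^{\ast}(w^1)w^0=q^{-i}A_iw$, by the same dictionary applied to $\rho(w)=w^0\otimes w^1$. For condition (i), I would substitute the formula
\[
g^ix^j\cdot a=q^{-ij}\,w^j\sum_{k=0}^j(-1)^kq^{-\frac{k(k-1)}{2}}{j\choose k}_{q^{-1}}(g^{i+k}\cdot a)
\]
into $\rho(a)=\sum_{i,j}(g^ix^j\cdot a)\otimes\varphi((g^ix^j)^{\ast})$, expand $g^{i+k}\cdot a=\sum_{t}q^{-(i+k)t}(g^t)^{\ast}(a^1)a^0$ through the duality, and insert $\varphi((g^ix^j)^{\ast})=\frac{1}{n}((j)_q!)^{-1}q^{ij+\frac{j(j-1)}{2}}\sum_{\ell}q^{\ell(i+j)}g^\ell x^j$ from Lemma \ref{inversa}. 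Because $w$ is central, $w^j$ can be pulled to the front of the first tensorand, and the factor $q^{-ij}$ from the action formula cancels the $q^{ij}$ inside $\varphi$, leaving exactly the stated formula (i) with its $q^{\frac{j(j-1)}{2}}$ prefactor.

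The one step I expect to demand genuine care is this final bookkeeping of $q$-powers, tracking the cancellation $q^{-ij}q^{ij}=1$ and confirming that it is precisely centrality of $w$ that allows $w^j$ to be moved outside the inner $k$-sum. Everything else is a formal transport of structure along $\varphi$, requiring no $q$-combinatorial identities beyond those already invoked for Theorem \ref{coacao_taft}.
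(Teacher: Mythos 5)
Your proposal is correct and is essentially the argument the paper intends: the corollary is the duality transport (via Proposition \ref{dual} and Lemma \ref{inversa}) of Corollary \ref{x_ponto_1_comuta}, exactly mirroring how Theorem \ref{coacao_taft} is obtained from Theorem \ref{taft_principal}, and your dictionary $A_i \leftrightarrow g^i\cdot 1_A$, $w \leftrightarrow x\cdot 1_A$ matches the one used there. Your two side remarks are also right: centrality of $w$ both renders $w^n\in Z(A)$ automatic (which is why that condition is absent) and justifies pulling $w^j$ out of the $k$-sum, with the $q^{-ij}q^{ij}$ cancellation producing the stated $q^{\frac{j(j-1)}{2}}$ prefactor.
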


In particular, if $A$ is a commutative algebra, then Theorem \ref{coacao_taft} and Corollary \ref{w_comuta} are equivalent.

\begin{cor}\label{coacao_identidade}
	Let $\rho:A \longrightarrow A \o T_n(q)$ be a linear map, denoted by $\rho(a)=a^0\otimes a^1$, such that $A_1\in\{0,1\}$, where $A_i=\sum_{r=0}^{n-1}q^{-ir}(g^r)^{\ast}(1^1)1^0$, $i \in \I_{0,n-1}$, and suppose  there exists $k \in I_{2,n-1}$, such that $\sum_{r=0}^{n-1}q^{-kr}(g^r)^{\ast}(a^1)a^0= a$, for all $a \in A$.
	Then, $\rho$ is a partial coaction of $T_n(q)$ on $A$ if and only if  $\rho$ is a global coaction, or $\rho$ is a linear map such that $(I \otimes \pi) \rho$
	is a partial coaction of  $\Bbbk C_n$ on $A$ such that $A_1=0$ and $$\rho(a)= \sum\limits_{i= 0}^{n-1} \left(\sum_{t=0}^{n-1}q^{-it}(g^t)^{\ast}(a^1)a^0\right)
	\otimes \frac{1}{n} \sum_{\ell=0}^{n-1} q^{\ell i} g^\ell,$$
	for all $a \in A$.
\end{cor}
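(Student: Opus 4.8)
The plan is to derive this corollary directly from the main characterization, Theorem \ref{coacao_taft}, by translating the hypotheses into the dual (partial action) language and invoking a dualized version of the already-established consequences. By Proposition \ref{dual} the linear map $\rho$ corresponds to the partial action $g^ix^j \cdot a = \sum_{r=0}^{n-1} q^{-ir}(g^r)^\ast(a^1)a^0$ (read off through $\varphi$), so the running hypothesis $A_1 \in \{0,1\}$ becomes exactly $g \cdot 1_A \in \{0, 1_A\}$, and the extra assumption that there exists $k \in \I_{2,n-1}$ with $\sum_{r=0}^{n-1} q^{-kr}(g^r)^\ast(a^1)a^0 = a$ for all $a$ becomes the condition $g^k \cdot a = a$ for all $a \in A$. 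First I would make these dictionary identifications explicit, exactly as in the proof of Theorem \ref{coacao_taft}.

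With the hypotheses translated, the dual statement is governed by the penultimate corollary on the partial action side (the one stating that under $g \cdot 1_A \in \{0,1_A\}$ and the existence of such a $k$, one has $g^ix^j \cdot a = \delta_{j,0}(g^i \cdot a)$). That corollary is proved by observing, via Theorem \ref{taft_principal}(iii), that $x \cdot 1_A = 0$, whence $w = x \cdot 1_A = 0$ and formula \eqref{formula_3_1} collapses to the $j=0$ term. Dualizing through Proposition \ref{dual}, the element $w = \sum_{k=0}^{n-1} q^{-k}(g^kx)^\ast(1^1)1^0$ vanishes, and the comodule structure map $\rho$ loses all its $x^j$ ($j \geq 1$) contributions. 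Concretely, substituting $w=0$ and $j=0$ into the formula of Theorem \ref{coacao_taft}(i), only the terms with $j=0$ survive, giving
\begin{equation*}
\rho(a) = \sum_{i=0}^{n-1} \left( \sum_{t=0}^{n-1} q^{-it}(g^t)^\ast(a^1)a^0 \right) \otimes \frac{1}{n} \sum_{\ell=0}^{n-1} q^{\ell i} g^\ell,
\end{equation*}
which is precisely the claimed expression, with $\frac{1}{n}\sum_\ell q^{\ell i} g^\ell = \varphi((g^i)^\ast)$. The condition that $(I \otimes \pi)\rho$ is a partial coaction of $\k C_n$ with $A_1 = 0$ is the dual of the requirement that $\cdot|_{\k C_n \otimes A}$ be a partial group action with $g \cdot 1_A = 0$, again by Proposition \ref{dual}.

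For the equivalence direction, in the forward direction I would assume $\rho$ is a partial coaction; then the corresponding $\cdot$ is a partial action, Theorem \ref{taft_principal}(iii) forces $x \cdot 1_A = 0$, and the formula above follows. For the converse, given that $(I \otimes \pi)\rho$ restricts to a partial $\k C_n$-coaction with $A_1 = 0$ and $\rho$ has the stated form, the dual map $\cdot$ is visibly $g^ix^j \cdot a = \delta_{j,0}(g^i \cdot a)$ with $w=0$ and $w^n = 0 \in Z(A)$ trivially, so Proposition \ref{prop_volta} (equivalently Theorem \ref{taft_principal}) guarantees $\cdot$ is a genuine partial action, and Proposition \ref{dual} returns that $\rho$ is a partial coaction. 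The main obstacle, and the only genuinely nonroutine point, is verifying that the dual pairing bookkeeping is consistent: one must check that the condition $\sum_{r} q^{-kr}(g^r)^\ast(a^1)a^0 = a$ really dualizes to $g^k \cdot a = a$ and that the vanishing of $w$ is faithfully transported under $\varphi = \psi^{-1}$ from Lemma \ref{inversa}. Once that correspondence is pinned down, the collapse of the formula is purely formal.
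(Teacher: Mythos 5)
Your proposal is correct and follows essentially the same route as the paper: the paper's proof is the one-line observation that the hypothesis gives $A_k=1$ (dually $g^k\cdot a=a$), so Theorem \ref{coacao_taft}(iii) forces $w=0$ and the formula of Theorem \ref{coacao_taft}(i) collapses to the $j=0$ terms, which is exactly your argument carried out on the dual (action) side via Proposition \ref{dual} and the action-side corollary. The only blemish is the slip where you write $g^ix^j\cdot a=\sum_{r=0}^{n-1}q^{-ir}(g^r)^{\ast}(a^1)a^0$ (that formula is the action of $g^i$ alone, i.e.\ the case $j=0$), but you use it correctly in the sequel, so nothing in the argument is affected.
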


\begin{proof}
	By hypothesis it follows that $A_k=1$ and so, by Theorem \ref{coacao_taft} (iii), $w=0$.
\end{proof}

\begin{exa}		\label{exa_coactiontaft}
	Every partial coaction of Sweedler's $4$-dimensional Hopf algebra on $A$, namely $\rho : A \longrightarrow A \o \mathbb{H}_{4}$, such that $A_1 =0$, is given by
	$$\rho(a)=a\otimes \left(\frac{1+g}{2}\right)+ w a\otimes \left(\frac{x-gx}{2}\right)-a w \otimes \left(\frac{x+gx}{2}\right),$$
	for all $a\in A$, where $w^2 \in Z(A)$.
	
	In particular, if $w \in Z(A)$, then for all $a\in A$, $$\rho(a)=a\otimes \left(\frac{1+g}{2}\right)- a w\otimes gx.$$
		
	For instance, if $A=\Bbbk[z]$ is the polynomial algebra in the variable $z$, this example generalizes \cite[Example 3.14]{corepresentations}.
	
	When $A=\Bbbk$, every $w \in \k$ characterizes such a partial coaction $\rho(1)=1\otimes z_w$, where $z_w= \frac{1 + g}{2} - w gx$.
\end{exa}

\begin{exa}
	Every partial coaction $\rho : A \longrightarrow A \o T_3(q)$, such that $A_1 =0$, is given by
		\begin{eqnarray*}
		\rho(a)&=&a\otimes \left(\frac{1+g+g^2}{3}\right)+w a \otimes \left(\frac{x+qgx+q^2g^2x}{3}\right)\\
		&-&aw\otimes \left(\frac{x+gx+g^2x}{3}\right)-w^2a\otimes \frac{q^2}{3}\left(x^2+q^2gx^2+qg^2x^2\right)\\
		&-& aw^2\otimes \frac{q}{3}\left(x^2+gx^2+g^2x^2\right)-waw\otimes \frac{1}{3}\left(x^2+qgx^2+q^2g^2x^2\right),
	\end{eqnarray*}
	for all $a\in A$, where $w^3 \in Z(A)$.
	
	In particular, if $w \in Z(A)$, then, for all $a\in A$,
	\begin{eqnarray*}
		\rho(a)&=&a\otimes \left(\frac{1+g+g^2}{3}\right)+ w a\otimes \frac{1}{3}\left( (q-1)gx + (q^2-1) g^2x\right)\\
		&-&wa\otimes qgx^2.
	\end{eqnarray*}
	
	When $A=\Bbbk$, every $w \in \k$ characterizes such a partial coaction $\rho(1)=1\otimes z_w$, where $z_w = \frac{1 + g + g^2 }{3} + \frac{1}{3}\left( (q-1)w gx + (q^2-1) w g^2x - 3q w^2gx^2 \right).$
\end{exa}

\begin{exa}
	The partial coaction of $T_4(\omega)$ on $A$ given by
	$$\rho(a)=a\otimes \left(\frac{1+g^2}{2}\right),$$
	for all $a \in A$, is an example that satisfies the conditions of Corollary \ref{coacao_identidade}.
\end{exa}

\section{Partial (co)actions of the Nichols Hopf algebra}\label{sec:Nichols}

First, we shall emphasize that Nichols Hopf algebras were introduced by Taft in \cite{Taft}, but were named after the work of Nichols \cite{nichols}.
Such Hopf algebras are the prototype of the theory of \emph{Nichols algebras}.
We will present this family of Hopf algebras as it appears in \cite[Section 2.2]{etingof}.

\medbreak

Let $n \geq 2$ be an integer and suppose $char(\k) \neq 2$.
The \emph{Nichols Hopf algebra of order $n$}, or shortly \emph{Nichols Hopf algebra}, here denoted by $\mathbb{H}_{2^n}$, has the structure as follows:
as algebra it is generated over $\k$ by the $n$ letters $g,x_1, \cdots, x_{n-1}$ with relations $g^2=1$, $x_i^2=0$, $x_i g = -g x_i$ and $x_ix_j = -x_jx_i$, for all $i, j \in \I_{n-1}$. 
Thus, the set
$\mathcal{B} = \{g^{j_0}x_1^{j_1}x_2^{j_2}...x_{n-1}^{ j_{n-1} }  : j_i \in \I_{0,1}, i \in \I_{0, n-1} \}$
is the canonical basis for $\mathbb{H}_{2^n}$ and consequently $dim_\k(\mathbb{H}_{2^n}) = 2^{n}$.
To complete the Hopf algebra structure of $\mathbb{H}_{2^n}$, we set
$\Delta (g) = g \o g$, $\varepsilon (g) = 1$, $S(g) =g^{-1}=g,$ and $\Delta (x_i) = x_i \o 1 + g \o x_i$, $\varepsilon (x_i) = 0$ and $S(x_i) = -gx_i,$ for all $i \in \I_{n-1}$. 
Note that  $G(\mathbb{H}_{2^n})= \{1, g \} = C_2$, for any $n$.
In particular, when $n=2$, the Nichols Hopf algebra $\mathbb{H}_{2^2}$ is exactly the Sweedler's $4$-dimensional Hopf algebra $\mathbb{H}_{4}$.

 Similarly to global actions of Taft's algebra \cite[Proposition 4]{Acoes_taft_Centrone}, it is straightforward to determine global actions of Nichols Hopf algebra completely through an automorphism and skew-derivations.
 
 \begin{prop}
 	Let A be an algebra, then an action of the Nichols Hopf algebra $H_{2^n}$ on $A$ is completely determinate by a choice of:
 	\begin{itemize}
 		\item [(i)] An automorphism $\alpha$ of $A$ of order $2$, \textit{i.e.}, $\alpha^2=\alpha$;
 		\item [(ii)] A family of $\alpha$-derivations $\{\delta_i\}_{i \in \mathbb{I}_{n-1}}$ of $A$ such that ${\delta_i}^2=0$ and $\alpha  \delta_i = - \delta_i  \alpha$, for all $i \in \mathbb{I}_{n-1}$.
 	\end{itemize}
 	Equivalently, the structure of $H_{2^n}$-module algebra on A is uniquely determined by a choice of:
 	\begin{itemize}
 		\item[(i)] A $\mathbb{Z}_2$-grading $A=A_{0}\oplus A_{1}$, defined by an automorphism $\alpha$;
 		\item [(ii)] A family of $\alpha$-derivations $\{\delta_i\}_{i \in \mathbb{I}_{n-1}}$ such that ${\delta_i}^2=0$, $\delta_i(A_0) \subset A_1$ and $\delta_i(A_1) \subset A_0$, for all $i \in \mathbb{I}_{n-1}$.
 	\end{itemize}
 \end{prop}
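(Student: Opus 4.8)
\emph{Approach.} The plan is to encode the $H_{2^n}$-module algebra structure through the action of the algebra generators $g, x_1, \dots, x_{n-1}$ and to read the conditions (i)--(ii) directly off the defining relations of $H_{2^n}$ together with the measuring axiom (PA.2). Concretely, I would set $\alpha := g \cdot (-)$ and $\delta_i := x_i \cdot (-)$ for each $i \in \I_{n-1}$, and show that the assignment $(\cdot) \mapsto (\alpha, \{\delta_i\})$ is a bijection between $H_{2^n}$-module algebra structures on $A$ and tuples satisfying the stated conditions. Since the two formulations differ only in describing $\alpha$ versus the associated grading, it suffices to treat the first and then translate.

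\emph{Forward direction.} Because $g$ is group-like and $g\cdot 1_A = \varepsilon(g)1_A = 1_A$, axiom (PA.2) gives $\alpha(ab)=\alpha(a)\alpha(b)$ and $\alpha(1_A)=1_A$, so $\alpha$ is a unital algebra endomorphism; the relation $g^2=1$ forces $\alpha^2 = \mathrm{id}_A$, whence $\alpha$ is an automorphism with $\alpha^2=\mathrm{id}_A$. Since $x_i$ is $(1,g)$-primitive, applying (PA.2) to $x_i\cdot(ab)$ yields exactly the $\alpha$-derivation identity $\delta_i(ab)=\delta_i(a)b+\alpha(a)\delta_i(b)$, while $x_i\cdot 1_A=\varepsilon(x_i)1_A=0$ gives $\delta_i(1_A)=0$. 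Finally, letting both sides of the relations $x_i^2=0$, $x_ig=-gx_i$ and $x_ix_j=-x_jx_i$ act on $A$ translates them into $\delta_i^2=0$, $\alpha\delta_i=-\delta_i\alpha$ and $\delta_i\delta_j=-\delta_j\delta_i$; the first two are precisely the conditions recorded in (ii), and the cross relation (anticommutativity for $i\neq j$) must be kept among the data for the correspondence to be exact.

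\emph{Converse.} I would observe that $\alpha^2=\mathrm{id}_A$, $\delta_i^2=0$, $\alpha\delta_i=-\delta_i\alpha$ and $\delta_i\delta_j=-\delta_j\delta_i$ are exactly the defining relations of $H_{2^n}$ under $g\mapsto\alpha$, $x_i\mapsto\delta_i$. Hence, by the universal property of $H_{2^n}$ as an algebra presented by generators and relations, there is a unique algebra homomorphism $H_{2^n}\to\mathrm{End}_{\k}(A)$ extending this assignment, making $A$ an $H_{2^n}$-module. To upgrade this to a module algebra I would invoke the standard fact that the set of $h\in H_{2^n}$ satisfying $h\cdot(ab)=(h_1\cdot a)(h_2\cdot b)$ and $h\cdot 1_A=\varepsilon(h)1_A$ is a unital subalgebra of $H_{2^n}$; it contains the algebra generators $g$ and the $x_i$ by the automorphism and $\alpha$-derivation properties together with $\Delta(g)=g\o g$ and $\Delta(x_i)=x_i\o 1+g\o x_i$, so it is all of $H_{2^n}$. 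Uniqueness is immediate, since $\alpha$ and the $\delta_i$ are recovered as the actions of $g$ and the $x_i$.

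\emph{Equivalence of the two formulations and main point.} This is the classical dictionary between involutive automorphisms and $\Z_2$-gradings, valid because $char(\k)\neq 2$: take $A_0$ and $A_1$ to be the $(+1)$- and $(-1)$-eigenspaces of $\alpha$, so that $A=A_0\oplus A_1$ is a $\Z_2$-grading compatible with multiplication, and verify that $\alpha\delta_i=-\delta_i\alpha$ is equivalent to $\delta_i$ being odd, i.e. $\delta_i(A_0)\subset A_1$ and $\delta_i(A_1)\subset A_0$. The only genuine subtlety, and the step I would watch most closely, is the bookkeeping of all four relation types in the presentation of $H_{2^n}$ --- in particular not overlooking the cross relations $x_ix_j=-x_jx_i$, whose translation $\delta_i\delta_j=-\delta_j\delta_i$ is needed for the algebra homomorphism in the converse to be well defined; the remaining verifications are direct.
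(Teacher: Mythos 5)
Your argument is correct, and in fact the paper offers no proof of this proposition at all: it only remarks that the verification is straightforward, in analogy with the global Taft case \cite[Proposition 4]{Acoes_taft_Centrone}. Your writeup supplies exactly the argument being alluded to: reading off $\alpha = g\cdot(-)$ and $\delta_i = x_i\cdot(-)$, translating the measuring axiom through $\Delta(g)=g\o g$ and $\Delta(x_i)=x_i\o 1 + g\o x_i$, and, for the converse, combining the universal property of the presentation of $\mathbb{H}_{2^n}$ with the standard fact that the set of elements $h$ satisfying $h\cdot(ab)=(h_1\cdot a)(h_2\cdot b)$ and $h\cdot 1_A=\varepsilon(h)1_A$ is a unital subalgebra, so that measuring need only be checked on the algebra generators. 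The passage between the two formulations via the $\pm 1$-eigenspaces of $\alpha$ (using $char(\k)\neq 2$) is also handled correctly.

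The point you flag at the end is a genuine correction to the statement rather than mere bookkeeping. As printed, item (ii) records only $\delta_i^2=0$ and $\alpha\delta_i=-\delta_i\alpha$, and these do \emph{not} imply the pairwise anticommutativity $\delta_i\delta_j=-\delta_j\delta_i$ that the relation $x_ix_j=-x_jx_i$ forces. Concretely: on the exterior algebra $A=\Lambda(u,v)$ with $\alpha$ the parity automorphism, the odd $\alpha$-derivations determined on generators by $\delta_1(u)=1_A$, $\delta_1(v)=0$ and $\delta_2(u)=uv$, $\delta_2(v)=0$ satisfy both printed conditions, yet $(\delta_1\delta_2+\delta_2\delta_1)(u)=v\neq 0$, so the assignment $g\mapsto\alpha$, $x_i\mapsto\delta_i$ does not factor through $\mathbb{H}_{2^n}$. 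Hence the cross relations must indeed be kept among the data for the stated bijection to hold, exactly as you insist; in the graded formulation the corresponding missing condition is that the odd maps $\delta_i$ pairwise anticommute. You also silently repair the statement's typo: order $2$ means $\alpha^2=\mathrm{id}_A$, not $\alpha^2=\alpha$.
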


\subsection{Partial Actions}
In this section, we compute a family of examples of partial actions of $\mathbb{H}_{2^n}$ on $A$. 

First, we shall analyze when a partial action is global.

\begin{prop}\label{prop_nichols_global}
	Let $\cdot : \mathbb{H}_{2^n} \o A \longrightarrow A$ be a partial action of $\mathbb{H}_{2^n}$ on $A$.
	If $g \cdot 1_A = 1_A$, then $\cdot$ is global.
\end{prop}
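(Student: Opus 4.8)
The plan is to mirror the Taft computation in Proposition \ref{parte_global}, using that $\mathbb{H}_{2^n}$ is generated as an algebra by $g$ and $x_1, \dots, x_{n-1}$, each of which is group-like or $(1,g)$-primitive exactly as in the Taft case. A partial action is global as soon as the module identity $h \cdot (k \cdot a) = hk \cdot a$ holds for all $h, k \in \mathbb{H}_{2^n}$ and $a \in A$; since this identity is linear in $h$ and every element is a combination of products of generators, it suffices to prove it for the generators (and all $k, a$) and then propagate it.

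First I would show $x_i \cdot 1_A = 0$ for each $i \in \I_{n-1}$. Applying (PA.2) to $x_i \cdot 1_A = x_i \cdot 1_A^2$ with $\Delta(x_i) = x_i \o 1 + g \o x_i$ and the hypothesis $g \cdot 1_A = 1_A$ gives
\[
x_i \cdot 1_A = (x_i \cdot 1_A)(1_H \cdot 1_A) + (g \cdot 1_A)(x_i \cdot 1_A) = 2\,(x_i \cdot 1_A),
\]
so $x_i \cdot 1_A = 0$; here the standing hypothesis $char(\k) \neq 2$ is used in an essential way.

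Next I would check the module identity on the generators, for all $k \in \mathbb{H}_{2^n}$ and $a \in A$. For $g$, since $\Delta(g) = g \o g$, condition (PA.3) and $g \cdot 1_A = 1_A$ give $g \cdot (k \cdot a) = (g \cdot 1_A)(gk \cdot a) = gk \cdot a$. For each $x_i$, condition (PA.3) with $\Delta(x_i) = x_i \o 1 + g \o x_i$, together with $x_i \cdot 1_A = 0$ and $g \cdot 1_A = 1_A$, yields
\[
x_i \cdot (k \cdot a) = (x_i \cdot 1_A)(k \cdot a) + (g \cdot 1_A)(x_i k \cdot a) = x_i k \cdot a.
\]

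Finally, writing an arbitrary monomial $h = y_1 \cdots y_r$ in the generators, I would extend the identity by a short induction on $r$: using the generator identity in the form $y \cdot (m \cdot b) = ym \cdot b$ (valid for all $m, b$ by the previous step) one peels off one generator at a time, obtaining $hk \cdot a = y_1 \cdot ( \cdots (y_r \cdot (k \cdot a)) \cdots ) = h \cdot (k \cdot a)$. Together with (PA.1) and (PA.2), which already hold, this establishes that $\cdot$ is a global action. I do not anticipate a genuine obstacle: the argument is structurally identical to the Taft case and quite short, the only points demanding attention being the role of $char(\k) \neq 2$ in forcing $x_i \cdot 1_A = 0$ and the observation that the two generator identities, once verified for all $k$, propagate to all of $\mathbb{H}_{2^n}$ by associativity.
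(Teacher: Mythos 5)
Your proof is correct and follows essentially the same route as the paper, whose proof of this proposition is simply the remark that it is analogous to Proposition \ref{parte_global}: you obtain $x_i \cdot 1_A = 0$ from (PA.2) applied to $x_i \cdot 1_A^2$, verify the module identity on the generators via (PA.3), and propagate it to all of $\mathbb{H}_{2^n}$, exactly as in the Taft case. One small correction: the hypothesis $char(\k) \neq 2$ is not actually needed for the step $x_i \cdot 1_A = 2\,(x_i \cdot 1_A)$, since $u = 2u$ forces $u = 0$ over any field.
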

\begin{proof}
	Analogously to the proof of Proposition \ref{parte_global}.
\end{proof}

\begin{prop}\label{nichols}
	Let $\cdot : \mathbb{H}_{2^n} \o A \longrightarrow A$ be a partial action of $\mathbb{H}_{2^n}$ on $A$.
	If $g \cdot 1_A = 0$ and $x_i\cdot 1_A\in Z(A)$, $i \in \I_{n-1}$, then,
	for all $a \in A$,
	$$ gx_i\cdot a=x_i\cdot a=(x_i\cdot 1_A)a,$$
	$$gx_{i_1}x_{i_2}...x_{i_s}\cdot a=x_{i_1}x_{i_2}...x_{i_s}\cdot a=0, \ \ \ s \geq 2, i_\ell \in \I_{n-1}, \ell \in \I_{s}.$$
\end{prop}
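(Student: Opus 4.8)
The plan is to grind out the proposition from the partial-action axioms, exploiting the two simplifications that the hypotheses $g\cdot 1_A=0$ and $x_i\cdot 1_A\in Z(A)$ provide. First I would record the key preliminary fact that $g\cdot a=0$ for every $a\in A$: since $a=a\,1_A$ in $A$ and $\Delta(g)=g\o g$, condition (PA.2) gives $g\cdot a=(g\cdot a)(g\cdot 1_A)=0$. Writing $w_i:=x_i\cdot 1_A$ and using $\Delta(x_i)=x_i\o 1+g\o x_i$, applying (PA.2) to $a=1_A\,a$ then yields $x_i\cdot a=(x_i\cdot 1_A)(1\cdot a)+(g\cdot 1_A)(x_i\cdot a)=w_i a$, which already settles the equality $x_i\cdot a=(x_i\cdot 1_A)a$.

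For the twisted generator $gx_i$, note from $\Delta(gx_i)=gx_i\o g+1\o gx_i$ and (PA.2) with second argument $1_A$ (which kills the $g\cdot 1_A$ term) that $gx_i\cdot a=a\,(gx_i\cdot 1_A)$. It then remains to identify $gx_i\cdot 1_A$, and for this I would apply (PA.3) with $h=gx_i$ and $k=g$ to an arbitrary $a$. The left-hand side $gx_i\cdot(g\cdot a)$ vanishes because $g\cdot a=0$, while the right-hand side, using $g^2=1$ and the relation $gx_i g=-x_i$ (derived from $x_i g=-gx_i$), expands to $(gx_i\cdot 1_A)a-w_i a$. Hence $(gx_i\cdot 1_A)a=w_i a$ for all $a$, and taking $a=1_A$ gives $gx_i\cdot 1_A=w_i$. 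Combining this with $gx_i\cdot a=a(gx_i\cdot 1_A)$ and the centrality of $w_i$ produces $gx_i\cdot a=a w_i=w_i a=x_i\cdot a$, the first assertion.

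The vanishing of the higher monomials I would prove by induction on $s\ge 2$, establishing simultaneously that $y\cdot a=0$ and $gy\cdot a=0$ for every monomial $y=x_{i_1}\cdots x_{i_s}$ and all $a$; if two indices coincide then $y=0$ in $\mathbb{H}_{2^n}$ and there is nothing to prove, so I assume the indices distinct. Writing $y=x_{i_1}y'$ with $y'=x_{i_2}\cdots x_{i_s}$ of degree $s-1$, the engine is (PA.3) with $h=gx_{i_1}$, chosen precisely because $\Delta(gx_{i_1})$ contains the summand $1\o gx_{i_1}$, so that the idempotent-type factor $h_1\cdot 1_A=1\cdot 1_A$ survives. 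Taking $k=gy'$ gives $gx_{i_1}\cdot(gy'\cdot a)=w_{i_1}(y'\cdot a)-y\cdot a$ (using $g^2=1$ in one term and $gx_{i_1}g=-x_{i_1}$, so that $gx_{i_1}\cdot gy'=-y$, in the other); taking instead $k=y'$ gives $gx_{i_1}\cdot(y'\cdot a)=w_{i_1}(gy'\cdot a)+gy\cdot a$, using $gx_{i_1}y'=gy$. In both identities the left-hand side is computed from the degree $s-1$ data: for $s\ge 3$ the inductive hypothesis makes $y'\cdot a$ and $gy'\cdot a$ vanish, so the left-hand sides vanish and the identities force $y\cdot a=0$ and $gy\cdot a=0$; for the base case $s=2$ one substitutes $y'=x_{i_2}$ with $y'\cdot a=gy'\cdot a=w_{i_2}a$ and uses $gx_{i_1}\cdot u=u\,w_{i_1}$ together with the centrality of the $w_i$ to reach the same conclusion.

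The only genuinely delicate point is the selection of the auxiliary elements in (PA.3): one must pick an $h$ whose coproduct has a $1\o(\cdots)$ summand — here $h=gx_i$, respectively $h=gx_{i_1}$ — so that the factor $h_1\cdot 1_A$ does not collapse to $0$, and then pick $k$ so that $h_2k$ reproduces the target monomial up to the sign coming from the anticommutation relation $x_i g=-gx_i$. Once these choices are fixed, every remaining step is a direct expansion of the coproduct, and the hypothesis $x_i\cdot 1_A\in Z(A)$ enters only to commute the central elements $w_i$ freely past $a$.
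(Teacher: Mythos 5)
Your proof is correct and follows essentially the same route as the paper: the same preliminary facts ($g\cdot a=0$, $x_i\cdot a=(x_i\cdot 1_A)a$, $gx_i\cdot a=a(gx_i\cdot 1_A)$ with $gx_i\cdot 1_A=x_i\cdot 1_A$ extracted from (PA.3) with $h=gx_i$, $k=g$), and the same induction on $s$ using (PA.3) with $h=gx_{i_1}$ and $k=x_{i_2}\cdots x_{i_s}$, respectively $k=gx_{i_2}\cdots x_{i_s}$. The only differences are organizational (you fold the $s=2$ base case into the same two (PA.3) identities and spell out the computations the paper leaves implicit), so there is nothing to correct.
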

\begin{proof}
	Let $\cdot : \mathbb{H}_{2^n} \o A \longrightarrow A$ be a partial action of $\mathbb{H}_{2^n}$ on $A$ with
	$x_i\cdot 1_A\in Z(A)$, $i \in \I_{n-1}$, and $g \cdot 1_A = 0$.
	
	First, note that $g\cdot a=0$, for $a\in A$. Then, by (PA.2), we have that $x_i\cdot a=(x_i\cdot 1_A)a$ and $gx_i\cdot a=a(gx_i\cdot 1_A)$. Moreover, considering $h=gx_i$ and $k=g$ in (PA.3), we obtain $gx_i\cdot 1_A=x_i\cdot 1_A$, and using that $x_i\cdot 1_A\in Z(A)$ we conclude $gx_i\cdot a=x_i\cdot a=(x_i\cdot 1_A)a$, for all $i \in \I_{n-1}$ and $a\in A$.
	
	Now, for $h=gx_i$ and $k=x_j$ we get $x_ix_j\cdot a=0$, and finally taking $h=gx_i$ and $k=gx_j$ we deduce that $gx_ix_j\cdot a=0$, for all $i,j \in \I_{n-1}$, $i \neq j$.
	
	By induction, one can prove that $x_{i_1}x_{i_2}...x_{i_s}\cdot a=gx_{i_1}x_{i_2}...x_{i_s}\cdot a=0$,  for $s \geq 2$, $i_\ell \in \I_{n-1}$, $\ell \in \I_{s}$.
	Indeed, we already done for $s=2$.
	Assume $s \geq 2$ and consider $h=gx_{i_1}$, $k=x_{i_2}...x_{i_s}$ (resp. $h=gx_{i_1}$, $k=gx_{i_2}...x_{i_s}$) in (PA.3).
	Then, using the induction hypothesis, it follows that $gx_{i_1}x_{i_2}...x_{i_s}\cdot a=0$ (resp. $x_{i_1}x_{i_2}...x_{i_s}\cdot a=0$). 	
\end{proof}

\begin{prop}\label{lda_alpha_eh_acao_nichols}
	Let $\cdot : \mathbb{H}_{2^n} \o A \longrightarrow A$ be a linear map  such that $1 \cdot 1_A = 1_A$, $g \cdot 1_A = 0$, $x_i\cdot 1_A\in Z(A)$, $i \in \I_{n-1}$,  and for all $a \in A$,
	$$ gx_i\cdot a=x_i\cdot a=(x_i\cdot 1_A)a,$$
	$$gx_{i_1}x_{i_2}...x_{i_s}\cdot a=x_{i_1}x_{i_2}...x_{i_s}\cdot a=0, \ \ \ s \geq 2, i_\ell \in \I_{n-1}, \ell \in \I_{s}.$$
	Then,  $\cdot $ is a partial action of $\mathbb{H}_{2^n}$ on $A$.
\end{prop}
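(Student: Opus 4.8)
The plan is to verify directly the three defining axioms (PA.1), (PA.2) and (PA.3) on the canonical basis $\mathcal{B}$, reducing to basis inputs by bilinearity. First I record the complete description of the map: writing $w_i := x_i \cdot 1_A \in Z(A)$, the map $\cdot$ acts by $1 \cdot a = a$, $g \cdot a = 0$, $x_i \cdot a = gx_i \cdot a = w_i a$, and $u \cdot a = 0$ for every basis monomial $u$ carrying at least two of the letters $x_1,\dots,x_{n-1}$; the displayed hypotheses, together with $1\cdot a = a$ and $g\cdot a = 0$ (the restriction of $\cdot$ to $\k C_2\otimes A$, consistent with and dual to Proposition \ref{nichols}), fix $\cdot$ on all of $\mathcal{B}$. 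The structural fact I will exploit repeatedly is that $\cdot$ annihilates $g$ and every monomial with two or more $x$-letters, so in any bilinear expression only the four short types $1$, $g$, $x_i$, $gx_i$ can contribute, and their surviving values are central.

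Axiom (PA.1) is immediate. For (PA.2), $h\cdot(ab) = (h_1\cdot a)(h_2\cdot b)$, I argue by the number $s$ of $x$-letters in a basis element $h$. For $s=0$ (so $h\in\{1,g\}$) and $s=1$ (so $h\in\{x_i,gx_i\}$) the check is direct from $\Delta(x_i)=x_i\otimes 1+g\otimes x_i$ and $\Delta(gx_i)=gx_i\otimes g+1\otimes gx_i$, the term involving $g$ dying because $g$ acts as zero. The interesting case is $s=2$, say $h=x_ix_j$ with $i\neq j$: expanding $\Delta(x_ix_j)=\Delta(x_i)\Delta(x_j)$ and using $x_ig=-gx_i$ gives $x_ix_j\otimes 1 - gx_i\otimes x_j + gx_j\otimes x_i + 1\otimes x_ix_j$; the two extreme terms vanish (a tensor factor with two $x$-letters), while the middle two contribute $-w_iw_j\,ab$ and $+w_jw_i\,ab$, which cancel by centrality of the $w_i$. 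For $s\geq 3$ every term of $\Delta(h)$ has a tensor factor with at least two $x$-letters, so each is annihilated; the same counting disposes of the monomials $gx_{i_1}\cdots x_{i_s}$.

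For (PA.3), $h\cdot(k\cdot a)=(h_1\cdot 1_A)(h_2 k\cdot a)$, I again split on $h$. The cases $h\in\{1,g\}$ are immediate, and $h=x_i$ is clean because the term $(g\cdot 1_A)(x_i k\cdot a)$ drops out thanks to $g\cdot 1_A=0$, leaving exactly $w_i(k\cdot a)=x_i\cdot(k\cdot a)$ independently of $k$. The case $h=gx_i$ is the one requiring algebra in $\mathbb{H}_{2^n}$: from $\Delta(gx_i)=gx_i\otimes g+1\otimes gx_i$ the right-hand side is $w_i(gk\cdot a)+(gx_ik\cdot a)$, and one computes the products $gk$ and $gx_ik$ for each of the four types of $k$ (and for $k$ with at least two letters) using $g^2=1$, $x_ig=-gx_i$ and $x_i^2=0$; the identity then holds type by type. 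Finally, for $h$ with $s\geq 2$ letters one has $h\cdot(k\cdot a)=0$, so it remains to see that $\sum (h_1\cdot 1_A)(h_2k\cdot a)=0$: a letter count shows that a nonzero summand forces $s=2$ and $k\in\{1,g\}$, and that residual case reduces to exactly the signed cancellation already seen in (PA.2).

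The main obstacle is the $s=2$ bookkeeping: getting the signs in the comultiplication of $x_ix_j$ (and $gx_ix_j$) right from $x_ig=-gx_i$, and then seeing that the two surviving middle terms annihilate each other. Everything with three or more letters, in both (PA.2) and (PA.3), is handled uniformly by the principle that at most one $x$-letter can survive the action, so no genuinely new computation appears beyond $s=2$; this is what keeps the verification finite and short.
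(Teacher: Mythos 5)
Your proof is correct and takes essentially the same route as the paper's: a direct case-by-case verification over the canonical basis, organized by the number of $x$-letters in $h$, with the crucial two-letter cancellation coming from $x_ig=-gx_i$ together with the centrality of the $w_i=x_i\cdot 1_A$, and everything carrying three or more letters annihilated by a letter count (your pigeonhole phrasing is a slightly cleaner version of the paper's explicit eight-term coproduct expansion for $s\geq 3$). The only cosmetic difference is that you check (PA.2) and (PA.3) separately, whereas the paper verifies the equivalent combined condition (PA.2$^\prime$); the substance of the computations is the same, and your reduction of the residual case $s=2$, $k\in\{1,g\}$ in (PA.3) to the signed cancellation from (PA.2) is sound.
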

\begin{proof}
	We need to check if condition (PA.2$^\prime$) holds.
	We proceed by fixing $h \in \mathcal{B}$ and verifying (PA.2$^\prime$) for any $k \in \mathcal{B}$, where $\mathcal{B}$ is the canonical basis of $\mathbb{H}_{2^n}$.
	
	First, if $h \in \{1, g, x_1, \cdots, x_{n-1} \}$, then (PA.2$^\prime$) holds trivially for any $k \in \mathcal{B}$.

	If $h=gx_i$, $i \in I_{n-1}$, then (PA.2$^\prime$) means 
	$gx_i\cdot(a(k\cdot b)) =(gx_i\cdot a)(gk\cdot b) + a(gx_ik \cdot b),$
	for any $k \in \mathcal{B}$.
	Using  that $x_i\cdot 1_A\in Z(A)$, $i \in \I_{n-1}$, and considering $k$ as $g^\ell, g^\ell x_j$ and $g^\ell x_{j_1}x_{j_2}...x_{j_s}$, where $\ell \in \I_{0,1},$ $j \in \I_{n-1}$ and $s \in \I_{2,n-1}, j_t \in \I_{n-1}, t \in \I_s$, it is a routine computation to get the equality.
	
	Now, if $h=g^\ell x_{i_1}x_{i_2}$, $\ell \in \I_{0, 1}$, $i_1, i_2 \in \I_{n-1}$, $i_1 < i_2$, then
	\begin{align*}
		\Delta(g^\ell x_{i_1}x_{i_2}) = & \ g^\ell x_{i_1}x_{i_2} \o g^\ell - g^{\ell+1}x_{i_1} \o g^\ell x_{i_2} + g^{\ell +1} x_{i_2} \o g^{\ell}x_{i_1} \\
		+ &\  g^\ell \o g^\ell x_{i_1}x_{i_2},
	\end{align*}
	and so (PA.2$^\prime$) becomes
	\begin{align*}
		g^\ell x_{i_1}x_{i_2}\cdot (a(k\cdot b)) = & \ (g^\ell x_{i_1}x_{i_2}\cdot a)(g^\ell k\cdot b) - (g^{\ell+1}x_{i_1}\cdot a) (g^\ell x_{i_2} k\cdot b) \\
		+ & \ (g^{\ell+1} x_{i_2}\cdot a) (g^{\ell}x_{i_1} k\cdot b) +  (g^\ell\cdot a) ( g^\ell x_{i_1}x_{i_2} k\cdot b),
	\end{align*}
	for any $k \in \mathcal{B}$.
	
	Since $(gx_s\cdot a)=(x_s\cdot a)=(x_s\cdot 1_A)a$ and $(g^\ell x_{i_1}x_{i_2}\cdot a)=(g^\ell x_{i_1}x_{i_2}k\cdot a)=0$, for any $s \in \I_{n-1}$, $k \in \mathcal{B}$, (PA.2$^\prime$) is reduced to
	$$(x_{i_1}\cdot 1_A)a(g^\ell x_{i_2}k\cdot b) = (x_{i_2}\cdot 1_A)a (g^\ell x_{i_1}k\cdot b),$$
	for any $ k \in \mathcal{B}$.
	One can verify this equality using $x_i\cdot 1_A\in Z(A)$ for $i \in \I_{n-1}$ and considering $k$ as $g^t$ and $g^t x_{j_1}...x_{j_s}$, where $t \in \I_{0,1}$ and $s \in \I_{n-1}$, $j_k \in \I_{n-1}$, $k \in \I_{s}$, $j_1 < \cdots < j_s$.
	
	Finally, it remains to check (PA.2$^\prime$) for $h = g^\ell x_{i_1}x_{i_2}...x_{i_s}$, where $\ell \in \I_{0,1}$ and $s \geq 3$, and any $k \in \mathcal{B}$. 
	Write $h =g^\ell x_{i_1}x_{i_2}x_{i_3}w$, for some $w \in \mathbb{H}_{2^n}$.
	Then,
	\begin{align*}
		\Delta(g^\ell  x_{i_1}x_{i_2}x_{i_3}w) & =  \Delta(g^\ell  x_{i_1}x_{i_2})\Delta(x_{i_3}) \Delta(w) \\
		& =  g^\ell x_{i_1}x_{i_2}x_{i_3}w_1 \o g^\ell w_2 - g^{\ell +1}x_{i_1}x_{i_3}w_1 \o g^\ell x_{i_2}w_2 \\
		& + g^{\ell +1} x_{i_2}x_{i_3}w_1 \o g^{\ell }x_{i_1}w_2 + g^\ell  x_{i_3}w_1 \o g^\ell x_{i_1}x_{i_2}w_2 \\
		& + g^{\ell +1}x_{i_1}x_{i_2}w_1 \o g^\ell x_{i_3} w_2  + g^{\ell +2}x_{i_1}w_1 \o g^\ell x_{i_2}x_{i_3} w_2 \\
		& - g^{\ell +2} x_{i_2}w_1 \o g^{\ell }x_{i_1}x_{i_3} w_2 + g^{\ell +1} w_1 \o g^\ell x_{i_1}x_{i_2}x_{i_3} w_2,
	\end{align*}
	and for any $k \in \mathcal{B}$, condition (PA.2$^\prime$) means
	\begin{align*}
		g^\ell  x_{i_1}x_{i_2}x_{i_3}w\cdot (a(k\cdot b)) & =( g^\ell x_{i_1}x_{i_2}x_{i_3}w_1\cdot a)  (g^\ell w_2 k\cdot b) \\
		& -  ( g^{\ell +1}x_{i_1}x_{i_3}w_1\cdot a)  (g^\ell x_{i_2}w_2 k\cdot b)\\
		& +  ( g^{\ell +1} x_{i_2}x_{i_3}w_1\cdot a)  ( g^{\ell }x_{i_1}w_2 k\cdot b) \\
		& + ( g^\ell  x_{i_3}w_1\cdot a)( g^\ell x_{i_1}x_{i_2}w_2 k\cdot b) \\
		& + ( g^{\ell +1}x_{i_1}x_{i_2}w_1 \cdot a)  ( g^\ell x_{i_3} w_2 k\cdot b) \\
		& + ( g^{\ell +2}x_{i_1}w_1\cdot a) ( g^\ell x_{i_2}x_{i_3} w_2 k\cdot b) \\
		& - ( g^{\ell +2} x_{i_2}w_1 \cdot a) ( g^{\ell }x_{i_1}x_{i_3} w_2 k\cdot b) \\
		& + ( g^{\ell +1} w_1 \cdot a) ( g^\ell x_{i_1}x_{i_2}x_{i_3} w_2 k\cdot b).
	\end{align*}
	
	Note that the above equality holds since the linear map $\cdot$ evaluated in a product that contains at least two skew-primitive elements as factors is equal to zero. 	Therefore,  the map $\cdot$ as given is a partial action of $\mathbb{H}_{2^n}$ on $\k$.
\end{proof}

Considering the results presented in this section, we have characterized all partial actions $\cdot : \mathbb{H}_{2^n} \o A \longrightarrow A$ when $g\cdot 1_A = 0$ and $x_i\cdot 1_A\in Z(A)$ for all $i \in \I_{n-1}$.

\begin{thm}\label{simetria_nichols}
	Every partial action $\cdot$ of the Nichols Hopf algebra $\mathbb{H}_{2^n}$ on $A$ such that $g\cdot 1_A = 1_A$, or $g\cdot 1_A = 0$ and $x_i\cdot 1_A\in Z(A)$ for all $i \in \I_{n-1}$ is symmetric.
\end{thm}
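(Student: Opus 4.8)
The plan is to split along the same dichotomy that governs the characterization of these actions. If $g \cdot 1_A = 1_A$, then by Proposition \ref{prop_nichols_global} the partial action is in fact global, and since every global action is a symmetric partial action (as recalled in Section \ref{sec:preliminaries}), condition (PA.S) holds automatically. Hence the whole content lies in the second regime, where $g \cdot 1_A = 0$ and $x_i \cdot 1_A \in Z(A)$ for every $i \in \I_{n-1}$; there Propositions \ref{nichols} and \ref{lda_alpha_eh_acao_nichols} supply the explicit formulas $gx_i \cdot a = x_i \cdot a = (x_i \cdot 1_A)a$ and $g^\ell x_{i_1}\cdots x_{i_s}\cdot a = 0$ for $s \geq 2$, which I will use repeatedly.

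To verify (PA.S), namely $h \cdot (k \cdot a) = (h_1 k \cdot a)(h_2 \cdot 1_A)$, I would fix $h$ in the canonical basis $\mathcal{B}$ and check the identity for every $k \in \mathcal{B}$, organizing the argument by the number of skew-primitive letters $x_i$ occurring in $h$ (recall $\ell \in \I_{0,1}$ since $g^2=1$). The decisive structural fact, already exploited in the proof of Proposition \ref{lda_alpha_eh_acao_nichols}, is that $\cdot$ annihilates any basis element carrying two or more of the $x_i$ as factors. For $h \in \{1, g\}$ the identity is immediate. For $h = g^\ell x_i$: when $\ell = 1$ both sides collapse at once, since $\Delta(gx_i) = gx_i \o g + 1 \o gx_i$, $g \cdot 1_A = 0$, and $gx_i \cdot 1_A = x_i \cdot 1_A$ is central, so each side equals $(x_i \cdot 1_A)(k \cdot a)$ after commuting the central element; when $\ell = 0$, a short case analysis on $k = g^m$, $k = g^m x_j$, and $k = g^m x_{j_1}\cdots x_{j_s}$ settles it, using centrality together with $g^2 \cdot a = a$ and $g \cdot a = 0$.

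The heart of the argument is $h = g^\ell x_{i_1}\cdots x_{i_s}$ with $s \geq 2$. Here the left-hand side of (PA.S) vanishes because $h$ carries at least two skew-primitive factors, so it remains to show the right-hand side vanishes as well. I would substitute the explicit comultiplication of $h$ (displayed in Proposition \ref{lda_alpha_eh_acao_nichols} for $s = 2$ and $s = 3$, with the signs produced by $x_i g = -gx_i$ and $x_i x_j = -x_j x_i$) and observe that a summand $(h_1 k \cdot a)(h_2 \cdot 1_A)$ can survive only when both $h_1 k$ and $h_2$ carry at most one skew-primitive letter. This kills all but a handful of cross-terms; for $s = 2$, only the two summands with $h_2 \in \{g^\ell x_{i_1}, g^\ell x_{i_2}\}$ remain, combining to $-(g^{\ell+1}x_{i_1}k \cdot a)(x_{i_2}\cdot 1_A) + (g^{\ell+1}x_{i_2}k \cdot a)(x_{i_1}\cdot 1_A)$, which vanishes after commuting the central elements $x_{i_j}\cdot 1_A$ and running through the same cases for $k$ as before. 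The general $s$ is identical in spirit.

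The main obstacle I anticipate is purely bookkeeping: correctly recording $\Delta(g^\ell x_{i_1}\cdots x_{i_s})$ together with the signs coming from the anticommutation relations, and then certifying that every surviving right-hand summand pairs off to zero. Since this mirrors the (PA.2$'$) verification already carried out in Proposition \ref{lda_alpha_eh_acao_nichols} almost verbatim — only the placement of the factor $h_2 \cdot 1_A$ on the right changes — I expect no genuinely new difficulty beyond careful sign tracking and repeated appeal to the centrality of the elements $x_i \cdot 1_A$.
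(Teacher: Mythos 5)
Your proposal is correct and follows essentially the same route as the paper: the global case is dispatched via Proposition \ref{prop_nichols_global}, and in the case $g \cdot 1_A = 0$ with $x_i \cdot 1_A \in Z(A)$ the paper likewise verifies (PA.S) by the same basis-by-basis computation as in the proof of Proposition \ref{lda_alpha_eh_acao_nichols}, using the vanishing of the action on elements with two or more skew-primitive factors and the centrality of the $x_i \cdot 1_A$. You merely spell out the case analysis (which the paper leaves as ``verified similarly''), and your bookkeeping, including the surviving $s=2$ cross-terms cancelling by centrality, checks out.
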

\begin{proof}
	Let $\cdot$ be a partial action of the Nichols Hopf algebra $\mathbb{H}_{2^n}$ on $A$. If $g\cdot 1_A=1_A$, then, by Proposition \ref{prop_nichols_global}, $\cdot$ is a global action and so symmetric.
	
	If $g\cdot 1_A=0$ and  $x_i\cdot 1_A\in Z(A)$ for all $i \in \I_{n-1}$, the symmetric condition is verified similarly as the proof of Proposition \ref{lda_alpha_eh_acao_nichols}.
\end{proof}

\begin{exa} Every partial action of Nichols Hopf algebra of order $2$ on $A$, namely $\cdot : \mathbb{H}_{2^2} \o A \longrightarrow A$, such that $g \cdot 1_A =0$ and $x\cdot 1_A \in Z(A)$ is given by $$1\cdot a=a, \ \ \ g\cdot a=0 \ \ \ \mbox{and} \ \ \ x\cdot a=gx\cdot a=(x\cdot 1_A) a,$$
	for all $a\in A$.
In fact, since the Nichols Hopf algebra of order $2$ is the Sweedler's Hopf algebra, it is expected by Example \ref{ex_taft}.
\end{exa}

\begin{exa}
	Every partial action $\cdot : \mathbb{H}_{2^3} \o A \longrightarrow A$,
	such that $g \cdot 1_A =0$ and  $x_1\cdot 1_A, x_2\cdot 1_A\in Z(A)$, is given by
	$$1\cdot a=a, \ \ \ g\cdot a=0, \ \ \ x_1\cdot a=gx_1\cdot a=(x_1\cdot 1_A)a,$$ $$  x_2\cdot a=gx_2\cdot a=(x_2\cdot 1_A)a, \ \ \ x_1x_2\cdot a=gx_1x_2\cdot a=0,$$
	for all $a\in A$.
	
	When $A=\Bbbk$, every $\alpha=(\alpha_1, \alpha_2) \in \k^2$ characterizes such a partial action by the linear map  $\lambda_\alpha: \mathbb{H}_{2^3} \longrightarrow \Bbbk$, where $\lambda_\alpha(1)=1_\k$, $\lambda_\alpha(g)=0$, $\lambda_\alpha(x_1x_2)=0$, $\lambda_\alpha(gx_1x_2)=0$ and $\lambda_\alpha(x_i)=\lambda_\alpha(gx_i)=\alpha_i$, for all $i \in \I_{2}$.
\end{exa}

\subsection{Partial coactions}
In this section, we compute some  partial coactions of Nichols Hopf algebra $\mathbb{H}_{2^n}$ on an algebra $A$. 
The setting here is the same as for the Taft algebra, since both are self-dual Hopf algebras.

\begin{lem}\label{iso_nichols}
	The linear map  $\psi : \mathbb{H}_{2^n}  \longrightarrow (\mathbb{H}_{2^n} )^*$
	given by  $\psi(g)=1^* - g^*$ and $\psi(x_i)=x_i^* - (gx_i)^*$, for all $i \in \I_{n-1}$, is an isomorphism of Hopf algebras.
\end{lem}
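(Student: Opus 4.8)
The plan is to exploit the presentation of $\mathbb{H}_{2^n}$ as a Hopf algebra. Since $\mathbb{H}_{2^n}$ is generated as an algebra by $g, x_1, \dots, x_{n-1}$ subject to the stated relations, with coalgebra structure fixed by declaring $g$ group-like and each $x_i$ a $(1,g)$-primitive, it carries the following universal property: for any Hopf algebra $K$, a Hopf algebra map $\mathbb{H}_{2^n}\to K$ exists and is uniquely determined by a choice of a group-like element $\gamma\in K$ with $\gamma^2=1$ and of $(1,\gamma)$-primitives $\xi_i\in K$ satisfying $\xi_i^2=0$, $\xi_i\gamma=-\gamma\xi_i$ and $\xi_i\xi_j=-\xi_j\xi_i$. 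I would therefore set $\gamma=\psi(g)=1^*-g^*$ and $\xi_i=\psi(x_i)=x_i^*-(gx_i)^*$ inside $(\mathbb{H}_{2^n})^*$ and verify exactly these properties. Throughout I use that the product of $f,f'\in(\mathbb{H}_{2^n})^*$ is the convolution $(ff')(h)=f(h_1)f'(h_2)$ with unit $\varepsilon$, while the comultiplication is $\Delta_*(f)(h\o h')=f(hh')$ with counit $f\mapsto f(1_{\mathbb{H}_{2^n}})$.

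First, the group-like part. I would observe that $1^*-g^*$ coincides with the algebra character $\chi\colon\mathbb{H}_{2^n}\to\k$ determined by $g\mapsto -1$ and $x_i\mapsto 0$: this assignment respects every defining relation (in particular $g^2\mapsto 1$ and $x_i^2\mapsto 0$), so $\chi$ is well defined, and evaluating it on the canonical basis $\mathcal{B}$ recovers $1^*-g^*$. Being an algebra map into $\k$, $\gamma=\chi$ is group-like in $(\mathbb{H}_{2^n})^*$, and $\gamma^2$ is the character sending $g\mapsto(-1)^2=1$, that is $\gamma^2=\varepsilon$. For later use I also record the identity $\varepsilon=1^*+g^*$.

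The heart of the argument is the skew-primitive part, and this is where I expect the main obstacle to lie. I would check that $\Delta_*(\xi_i)=\xi_i\o\varepsilon+\gamma\o\xi_i$, equivalently $\xi_i(hh')=\xi_i(h)\varepsilon(h')+\gamma(h)\xi_i(h')$ for all $h,h'\in\mathcal{B}$, together with $\xi_i^2=0$, $\xi_i\gamma=-\gamma\xi_i$ and $\xi_i\xi_j=-\xi_j\xi_i$. Because $\xi_i$ is supported on $\{x_i,gx_i\}$ and $\gamma$ on $\{1,g\}$, each convolution is nonzero only on the few basis elements whose (iterated) coproduct contains the relevant tensor factors, so every identity collapses to a finite check governed entirely by the multiplication rules $g^2=1$, $x_i^2=0$, $x_ig=-gx_i$, $x_ix_j=-x_jx_i$ and the coproducts $\Delta(x_i)=x_i\o 1+g\o x_i$, $\Delta(gx_i)=gx_i\o g+1\o gx_i$, $\Delta(x_ix_j)$ and $\Delta(gx_ix_j)$. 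For example $\xi_i^2=0$ holds because no coproduct term can carry two factors of $x_i$-type (as $x_i^2=0$), while the sign in $\xi_i\xi_j=-\xi_j\xi_i$ is produced precisely by $x_ix_j=-x_jx_i$. Granting these relations, the universal property yields a well-defined Hopf algebra homomorphism $\psi$; compatibility with the antipode is then automatic, since a bialgebra map between Hopf algebras always commutes with the antipode.

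It remains to prove that $\psi$ is bijective, and since $\dim_\k\mathbb{H}_{2^n}=\dim_\k(\mathbb{H}_{2^n})^*=2^n$ it suffices to establish surjectivity. Using $\mathrm{char}(\k)\neq 2$ together with $\gamma=1^*-g^*$ and $\varepsilon=1^*+g^*$, I recover $1^*=\tfrac12(\varepsilon+\gamma)$ and $g^*=\tfrac12(\varepsilon-\gamma)$ in the image of $\psi$. A short convolution computation gives $\gamma\xi_i=-(x_i^*+(gx_i)^*)$, which combined with $\xi_i=x_i^*-(gx_i)^*$ produces $x_i^*=\tfrac12(\xi_i-\gamma\xi_i)$ and $(gx_i)^*=-\tfrac12(\xi_i+\gamma\xi_i)$, again in the image. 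Thus the image is a subalgebra of $(\mathbb{H}_{2^n})^*$ containing the dual-basis functionals of $1$, of $g$, and of all the generators $x_i$ and $gx_i$; a standard leading-term argument against the basis $\mathcal{B}$ then shows these functionals generate $(\mathbb{H}_{2^n})^*$ as an algebra (equivalently, the $2^n$ convolution-monomials $\gamma^{j_0}\xi_1^{j_1}\cdots\xi_{n-1}^{j_{n-1}}$ are linearly independent). Hence $\psi$ is onto, and therefore an isomorphism of Hopf algebras.
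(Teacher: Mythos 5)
Your proposal is correct, but there is nothing in the paper to compare it against: the paper states Lemma \ref{iso_nichols} with no proof at all (just as it does for the analogous Taft-algebra Lemma \ref{inversa}), implicitly appealing to the known self-duality of $\mathbb{H}_{2^n}$. Your argument is the standard way to actually establish it, and it checks out: $1^*-g^*$ is indeed the algebra character $g\mapsto -1$, $x_i\mapsto 0$, hence group-like in $(\mathbb{H}_{2^n})^*$ with $\gamma^2=\varepsilon$; each $\xi_i=x_i^*-(gx_i)^*$ is the $(\varepsilon,\gamma)$-twisted derivation determined by $\xi_i(x_j)=\delta_{ij}$, $\xi_i(g)=0$, so it is $(\varepsilon,\gamma)$-primitive; the relations $\xi_i^2=0$, $\xi_i\gamma=-\gamma\xi_i$, $\xi_i\xi_j=-\xi_j\xi_i$ reduce, as you say, to finite support computations (e.g. $(\gamma\xi_i)(x_i)=\gamma(g)\xi_i(x_i)=-1$ and $(\gamma\xi_i)(gx_i)=\gamma(1)\xi_i(gx_i)=-1$, confirming $\gamma\xi_i=-(x_i^*+(gx_i)^*)$, which is exactly what makes $x_i^*,(gx_i)^*$ land in the image once $\mathrm{char}(\k)\neq 2$); and antipode-compatibility is automatic for bialgebra maps between Hopf algebras.

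The only step you leave genuinely sketched is the closing ``standard leading-term argument'' for surjectivity, and it deserves one more line, which your own support observation already supplies. Since $\xi_i$ vanishes on every basis monomial except $x_i$ and $gx_i$, while $\gamma$ vanishes off $\{1,g\}$, the pairing $\langle \gamma^{j_0}\xi_{i_1}\cdots\xi_{i_s},\, g^{a}x_{k_1}\cdots x_{k_t}\rangle$ vanishes unless $\{i_1,\dots,i_s\}=\{k_1,\dots,k_t\}$: under the iterated coproduct each factor $\xi_{i_r}$ must absorb exactly one $x$-letter and $\gamma^{j_0}$ absorbs none. So the Gram matrix of the $2^n$ monomials $\gamma^{j_0}\xi_1^{j_1}\cdots\xi_{n-1}^{j_{n-1}}$ against the basis $\mathcal{B}$ is block diagonal, with one $2\times 2$ block per subset $S\subseteq \I_{n-1}$ indexed by $(j_0,a)\in\I_{0,1}\times\I_{0,1}$; a direct computation (your $n=2$ case is the generic one, e.g. for $S=\{i\}$ the block is $\left(\begin{smallmatrix}1&-1\\-1&-1\end{smallmatrix}\right)$) shows every block has determinant $\pm 2\neq 0$ since $\mathrm{char}(\k)\neq 2$. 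With that made explicit, the monomials are linearly independent, $\psi$ is surjective, and the proof is complete.
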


From the isomorphism $\psi$ in the above lemma, it follows that $$\psi\left(\frac{1+g}{2}\right) = 1^*, \psi\left(\frac{x_i-gx_i}{2}\right) = x_i^* \textrm{ and } \psi\left(\frac{-(x_i+gx_i)}{2}\right) = (gx_i)^*,$$
for all $i \in \I_{n-1}$.
Thus, we are able to prove the following result.
\begin{thm}\label{nichols_coac}
	Let $\rho:A \longrightarrow A \o \mathbb{H}_{2^n}$ be a linear map, denoted by $\rho(a)=a^0\otimes a^1$, such that $A_1=(1^{\ast}-g^{\ast})(1^1)1^{0}=0$ and $w_i \in Z(A)$, where $w_i=(x_i^* - (gx_i)^*)(1^1)1^0$, for all $i \in \I_{n-1}$. Then, $\rho$  is a partial coaction if and only if
	\begin{equation*}
		\displaystyle \rho(a)= a\otimes \left(\frac{1+g}{2}\right)-\sum\limits_{i=1}^{n-1}w_ia\otimes gx_i,
	\end{equation*}
	for any $a \in A$.
	Moreover, all these partial coactions are symmetric.
\end{thm}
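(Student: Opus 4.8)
The plan is to exploit the self-duality of $\mathbb{H}_{2^n}$ exactly as in the proof of Theorem \ref{coacao_taft}, transporting the entire problem to the already-classified partial \emph{actions} of $\mathbb{H}_{2^n}$ through the isomorphism $\psi$ of Lemma \ref{iso_nichols}. Since $\mathbb{H}_{2^n}$ is finite-dimensional, the dual pairing is non-degenerate and the rationality hypothesis is automatic, so Proposition \ref{dual} makes the assignment $\rho \mapsto \cdot$ with $h \cdot a = \psi(h)(a^1)a^0$ a bijection between partial coactions $\rho: A \to A \o \mathbb{H}_{2^n}$ and partial actions $\cdot: \mathbb{H}_{2^n} \o A \to A$, a bijection that moreover preserves the symmetric condition. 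Thus $\rho$ is a partial coaction if and only if the associated map $\cdot$ is a partial action, and I would phrase the whole argument on that side.

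Next I would translate the two hypotheses. Using $\psi(g) = 1^* - g^*$ one computes $g \cdot 1_A = \psi(g)(1^1)1^0 = (1^*-g^*)(1^1)1^0 = A_1$, so the assumption $A_1 = 0$ is precisely $g \cdot 1_A = 0$. Likewise, from $\psi(x_i) = x_i^* - (gx_i)^*$ one gets $x_i \cdot 1_A = \psi(x_i)(1^1)1^0 = w_i$, so the hypothesis $w_i \in Z(A)$ is exactly $x_i \cdot 1_A \in Z(A)$ for every $i \in \I_{n-1}$. Under these two conditions, Propositions \ref{nichols} and \ref{lda_alpha_eh_acao_nichols} together give a complete characterization of the partial action: $\cdot$ is a partial action precisely when $g \cdot a = 0$, $x_i \cdot a = gx_i \cdot a = (x_i \cdot 1_A)a = w_i a$, and every basis element involving a product of at least two of the $x_i$ acts as zero.

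It then remains to recover the displayed formula for $\rho$. By Proposition \ref{dual} one has $\rho(a) = \sum_{b \in \mathcal{B}} (b \cdot a) \o \psi^{-1}(b^*)$, and only the basis elements $1$, $x_i$ and $gx_i$ contribute, since $g \cdot a = 0$ and all longer skew-primitive products act trivially. Substituting the identities $\psi^{-1}(1^*) = \tfrac{1+g}{2}$, $\psi^{-1}(x_i^*) = \tfrac{x_i - gx_i}{2}$ and $\psi^{-1}((gx_i)^*) = \tfrac{-(x_i+gx_i)}{2}$ recorded right after Lemma \ref{iso_nichols}, and using $x_i \cdot a = gx_i \cdot a = w_i a$, the $x_i$ and $gx_i$ contributions combine as $w_i a \o \left( \tfrac{x_i - gx_i}{2} - \tfrac{x_i + gx_i}{2} \right) = -w_i a \o gx_i$, which yields $\rho(a) = a \o \tfrac{1+g}{2} - \sum_{i=1}^{n-1} w_i a \o gx_i$, as claimed. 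Symmetry is then immediate, since the corresponding partial actions are symmetric by Theorem \ref{simetria_nichols} and Proposition \ref{dual} carries this property across the correspondence. I expect no serious obstacle, as the substantive work is already done in Propositions \ref{nichols}, \ref{lda_alpha_eh_acao_nichols} and Theorem \ref{simetria_nichols}; the only genuine care is the bookkeeping of checking that the finite-dimensional dual pairing correspondence is bijective and symmetry-preserving, and that the basis sum collapses correctly after applying $\psi^{-1}$.
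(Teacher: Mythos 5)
Your proposal is correct and follows essentially the same route as the paper's own proof: dualizing through Proposition \ref{dual} and the isomorphism $\psi$ of Lemma \ref{iso_nichols}, translating $A_1=0$ and $w_i\in Z(A)$ into $g\cdot 1_A=0$ and $x_i\cdot 1_A\in Z(A)$, invoking the classification of partial actions, and collapsing the basis sum via the recorded values of $\psi^{-1}$. If anything, you are slightly more explicit than the paper in citing Proposition \ref{lda_alpha_eh_acao_nichols} for the converse direction and Theorem \ref{simetria_nichols} for symmetry, both of which the paper uses implicitly.
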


\begin{proof}
	Let $\mathcal{B} = \{g^{j_0}x_1^{j_1}x_2^{j_2}...x_{n-1}^{ j_{n-1} }  : j_i \in \I_{0,1}, i \in \I_{0, n-1} \}$ be the canonical basis of $\mathbb{H}_{2^n}$ .	By Proposition \ref{dual} and Proposition \ref{nichols}, $A_1=0\Leftrightarrow g \cdot 1_A=0$ and
	\begin{eqnarray*}
		\rho(a)&=& (g^{j_0}x_1^{j_1}x_2^{j_2}...x_{n-1}^{ j_{n-1} }\cdot a) \otimes \psi^{-1}((g^{j_0}x_1^{j_1}x_2^{j_2}...x_{n-1}^{ j_{n-1} })^{\ast})\\
		&=& 1\cdot a \otimes \psi^{-1}(1^{\ast})+\sum_{i=1}^{n-1}x_i\cdot a \otimes \psi^{-1}(x_i^{\ast})+gx_i\cdot a \otimes \psi^{-1}(gx_i^{\ast})\\
		&=&a\otimes \left(\frac{1+g}{2}\right)-\sum\limits_{i=1}^{n-1}w_ia\otimes gx_i, 
	\end{eqnarray*}
	for all $a\in A$, where $\psi$ is given by Lemma \ref{iso_nichols}. 
\end{proof}

\noindent{\textit{Remark.}} By the correspondence  $A_1\Leftrightarrow g \cdot 1_A$, $A_1=1_A$ if only if $\rho$ is global coaction.

\begin{exa}
	When $A=\Bbbk$, these partial coactions coincide with the partial coactions $\rho(a)=1\otimes z_\alpha$, where $z_\alpha= \frac{1+g}{2} - \sum_{i=1}^{n-1}\alpha_i \, gx_i$, presented in \cite[Theorem 4.6]{taft_corpo_revista}.
\end{exa}

\begin{exas}		
	If $n=2$ every partial coaction such that $A_1 =0$ and $w \in Z(A)$ is given by $\rho(a)=a\otimes \left(\frac{1+g}{2}\right)- wa\otimes gx,$
	for all $a\in A$. See Example \ref{exa_coactiontaft}.
	
	If $n=3$, every partial coaction such that $A_1 =0$ and $w_1, w_2\in Z(A)$ is given by $\rho(a)=a\otimes \left(\frac{1+g+g^2}{3}\right)- w_1 a\otimes gx_1-w_2a\otimes gx_2,$ for all $a\in A$.
\end{exas}

\bibliographystyle{abbrv}

\bibliography{referencias}

\end{document}